
\documentclass[12pt,leqno]{amsart}
\usepackage{amsmath,amssymb,amsthm}
\usepackage{eucal}

\setlength{\textwidth}{6.5in}
\setlength{\oddsidemargin}{0.0in}
\setlength{\evensidemargin}{0.0in}      
\setlength{\textheight}{9in}
\setlength{\topmargin}{-.5in}

\pagestyle{plain}                       

\newcommand\mylabel[1]{\label{#1}} 			

\newcommand\placefig[2]{\bigskip\begin{center} **********************************************************
\newline 
\emph{Place FIGURE #1 near EXAMPLE \ref{#2}.} 
\smallskip\newline
**********************************************************
\end{center}\bigskip}
\newcommand\placefigure[1]{\placefig{\ref{F#1}}{X#1}}
\renewcommand\placefig[2]{}


\newtheorem{lem}{Lemma}[section]
\newtheorem{cor}[lem]{Corollary}
\newtheorem{prop}[lem]{Proposition}
\newtheorem{thm}[lem]{Theorem}

\theoremstyle{remark}
\newtheorem*{defn}{Definition}                        
\newtheorem{exam}{Example}[section]
\newtheorem{resprob}{Research Problem}

\numberwithin{equation}{section}
\numberwithin{figure}{section}

\hyphenation{Py-thag-o-re-an}

\renewcommand{\phi}{\varphi}
\renewcommand{\epsilon}{\varepsilon}
\newcommand\eset{\varnothing}

\newcommand\bQ{\mathbf{Q}}

\newcommand\cA{\mathcal{A}}
\newcommand\cB{\mathcal{B}}
\newcommand\cC{\mathcal{C}}
\newcommand\cE{\mathcal{E}}
\newcommand\cG{\mathcal{G}}
\newcommand\cH{\mathcal{H}}

\newcommand\cL{\mathcal{L}}
\newcommand\cO{\mathcal{O}}
\newcommand\cP{\mathcal{P}}
\newcommand\cS{\mathcal{S}}
\newcommand\cT{\mathcal{T}}
\newcommand\cZ{\mathcal{Z}}

\newcommand\bbC{\mathbb{C}}
\newcommand\bbE{\mathbb{E}}
\newcommand\bbP{\mathbb{P}}
\newcommand\bbQ{\mathbb{Q}}
\newcommand\bbR{\mathbb{R}}
\newcommand\bbZ{\mathbb{Z}}	    

\newcommand\bgr[1]{\langle#1\rangle}
\newcommand\sgn{\operatorname{sgn}}
\newcommand\textb{\text{\rm b}}
\newcommand\Lat{\operatorname{Lat}}
\newcommand\Latb{\operatorname{Lat^{\textb}}}
\newcommand\Span{\operatorname{span}}
\newcommand\aff{\operatorname{aff}}
\newcommand\rk{\operatorname{rk}}
\newcommand\codim{\operatorname{codim}}
\newcommand\proj{\operatorname{proj}}

\newcommand\atinf{_{(\infty)}}
\newcommand\Pinf{\cP}

\newcommand\equivslistbegin{\begin{enumerate}\item[]\begin{enumerate}
\renewcommand{\theenumi}{}
\renewcommand{\theenumii}{\roman{enumii}}\renewcommand{\labelenumii}{(\theenumii)}}
\newcommand\equivslistend{\end{enumerate}\end{enumerate}}

\begin{document}


\title{Perpendicular Dissections of Space}
\author{Thomas Zaslavsky\\
	Binghamton University\\
	Binghamton, N.Y., U.S.A. 13902-6000}

\begin{abstract}
For each pair $(Q_i,Q_j)$ of reference points and each real number $r$ there is a unique hyperplane $h \perp Q_iQ_j$ such that $d(P,Q_i)^2 - d(P,Q_j)^2 = r$ for points $P$ in $h$.  Take $n$ reference points in $d$-space and for each pair $(Q_i,Q_j)$ a finite set of real numbers.  The corresponding perpendiculars form an arrangement of hyperplanes.  We explore the structure of the semilattice of intersections of the hyperplanes for generic reference points.  The main theorem is that there is a real, additive gain graph (this is a graph with an additive real number associated invertibly to each edge) whose set of balanced flats has the same structure as the intersection semilattice.  We examine the requirements for genericity, which are related to behavior at infinity but remain mysterious; also, variations in the construction rules for perpendiculars.  We investigate several particular arrangements with a view to finding the exact numbers of faces of each dimension.  The prototype, the arrangement of all perpendicular bisectors, was studied by Good and Tideman, motivated by a geometric voting theory.  Most of our particular examples are suggested by extensions of that theory in which voters exercise finer discrimination.  Throughout, we propose many research problems.
\end{abstract}

\subjclass[2000]{{\em Primary} 05C22, 52C35; {\em Secondary} 05A15, 05B35, 06C10, 51F99}
\keywords{Arrangement of hyperplanes, affinographic arrangement, deformation of Coxeter arrangement, additive real gain graph, graphic lift matroid, concurrence of perpendiculars, Pythagorean theorem, perpendicular bisector, intersection semilattice, geometric semilattice, balanced chromatic polynomial, Whitney numbers, composed partition, fat forest}
\thanks{This article is a redaction of a manuscript from 1984--85 reporting research performed originally in 1983 when I was at the Ohio State University and 1984--85 while I was a Visiting Scholar in Mathematical Research at the University of Evansville, with substantial additions (all of Sections \ref{matroids} and \ref{induced} and parts of others) and emendations in Year 2000; then further improved upon advice from a referee.  The research was supported by the National Science Foundation (through the Ohio State University) and the SGPNR.  My thanks to Clark Kimberling for the important reference \cite{Clem} and to Carol Nedlik for turning a remarkably messy old manuscript into a readable typescript that I could edit into presentability.}
\maketitle

\begin{center}
\emph{\bf Postpublication revision 8 May 2002: added reference to Voronoi in \S \ref{proj}: ``that goes back to the original paper \cite{Vor}''.}
\end{center}


\section{Introduction} \mylabel{intro}

Choose $n$ points $Q_1, \hdots, Q_n$ in $d$-dimensional Euclidean space and, for each pair
of points, take the hyperplane which is the perpendicular bisector of their connecting line segment.  Into how many regions does this arrangement of $\binom{n}{2}$ hyperplanes dissect the space?  In their article \cite{GT}, Good and Tideman showed that this number is, in general, equal to
\begin{equation}\mylabel{E1}
|s(n,n)| + |s(n,n-1)| + |s(n,n-2)| + \cdots + |s(n,n-d)|.
\end{equation}
Here $s(n,k)$ is the Stirling number of the first kind, one of whose many definitions is
that it equals $(-1)^{n-k}$ times the number of permutations having $k$ cycles of a set of
$n$ objects.

This geometry problem arose from a model of voter preference.  Suppose there are $n$ candidates, and $d$ issues on which each candidate has a position indicated by a real number.  A voter, who also has a (real number) position on each issue, prefers the nearer of two candidates.  How many different orderings of the candidates are 
possible?  Considering candidates $i$ and $j$, represented by points $Q_i$ and $Q_j$ in $d$-space, the perpendicular bisecting hyperplane of the segment $[Q_i,Q_j]$ divides those voters ranking $i$ over $j$ from those ranking $j$ over $i$.  The 
$\binom{n}{2}$ bisecting hyperplanes together dissect the space into regions such that all voters (i.e., points) in each region have the same ranking of candidates, while different regions yield different rankings.  Thus the model leads to the 
geometry problem described in the first paragraph and thus to the solution given by \eqref{E1}.  (This account ignores pseudo-orderings, in which the voter ranks some candidates equally.  Pseudo-orderings arise from voters located within bisecting 
hyperplanes; so to count all pseudo-orderings we should count all the cells, of all dimensions, into which the bisectors divide the space.  That was also done by Good and Tideman.)

Good and Tideman proved their formula by an induction on $n$ and $d$, in the course of which they proved (inductively) that for each $(d-k)$-dimensional flat of a 
special kind in the original arrangement the remaining hyperplanes form an arrangement of bisectors of $n-k$ points.  From this fact they deduced the number of $i$-dimensional faces of the original arrangement, for all $i$.  (Their proof for the special $(d-k)$-flats contains an oversight but their numerical result is correct.  We discuss this in Section \ref{induced}.)

We present a new proof and a generalization of Good and Tideman's formula \eqref{E1}, which explains the occurrence of the Stirling numbers.  Our approach is based on the fact that one can compute the number of regions of an arrangement $\cH$ of hyperplanes from a knowledge of the partially ordered set $\cL(\cH)$ of flats of intersection of the hyperplanes (ordered by reverse inclusion).

In broad outline: According to \cite[Theorem A]{FUTA}, $\cH$ has $|w_0|+|w_1|+\cdots+|w_d|$ regions, where the $w_i$ are the so-called ``Whitney numbers of the first kind'' of $\cL(\cH)$.  If $\cH$ is the arrangement of bisectors of $n$ points in $d$-space, then $\cL(\cH)$ is isomorphic to the set of all partitions of $n$ objects into at least $n-d$ parts, ordered by refinement.  (This was proved in effect by Good and Tideman.  We give a new and more general proof: see Section \ref{X1} for the result.)  Since the Whitney number $w_i$ of the partition lattice equals $s(n,n-i)$ (\cite{F-Rota}, \cite[\S 9]{FCT}), formula \eqref{E1} follows.  This proof sketch will be filled out below.

Our generalization is to allow other hyperplanes perpendicular to the lines $Q_iQ_j$ besides the bisectors.  We call these arrangements {\emph {Pythagorean arrangements of hyperplanes}}.  Then one must decide how to specify the location of the hyperplane, or, what is the same thing, of its foot $P_{ij}$ on $Q_iQ_j$.  Two obvious ways to do this are by specifying either the signed distance $d_{ij}(P_{ij})$ of the foot from the midpoint of the segment $[Q_i,Q_j]$ (positive toward $Q_j$, negative toward $Q_i$), or the proportional distance $d_{ij}(P_{ij}) / d(Q_i, Q_j)$.  The most appropriate way to locate $P_{ij}$, however, is neither of these.  We introduce the {\emph {Pythagorean coordinate}} of a point $P$ with respect to $Q_iQ_j$: it is the value
\begin{equation}\mylabel{E2}
\psi_{ij}(P) = d(P,Q_i)^2 - d(P,Q_j)^2 = 2\, d_{ij}(P) \cdot d(Q_i,Q_j).
\end{equation}
The significance of this coordinate is that, when it is employed to determine the locations of perpendicular hyperplanes, then for generic reference points $Q_1, \hdots, Q_n$ all concurrences of perpendiculars to lines $Q_iQ_j$ are determined in a nontrivial way by the Pythagorean coordinates of their feet (Section \ref{pyth}).\footnote{The only other specific use of Pythagorean coordinates of which I am aware is in Cacoullos \cite{Cac1}; they are his $\delta_{ij}(X)$.  He locates hyperplanes by proportional coordinates.  However, his problem is merely to find the nearest neighbor from amongst $d+1$ points and not the entire distance ranking of $n$ points.}  That enables us to compute the number of regions (and bounded regions and indeed flats and faces of any dimension) of the arrangement of perpendiculars.  We can also characterize (Section \ref{induced}) the arrangement induced in each flat by the original hyperplanes.  If on the other hand we locate the feet by signed distance from the midpoint, or proportional distance, or indeed any locating function of the form $c\psi_{ij}(P_{ij}) / d(Q_i, Q_j)^\alpha$ where $\alpha \neq 0$ and $c$ is any fixed nonzero multiplier, then (for generic reference points) there are no concurrences except those of bisectors (Section \ref{nonpyth}).  Thus the formula for the number of regions becomes less richly structured, depending not at all on the exact locations of the nonbisecting perpendiculars.

With the general theory we can treat more sophisticated voters, for instance those who prefer $Q_i$ to $Q_j$ only when the former is significantly closer than the latter---provided ``significantly closer'' is interpreted so as to be a linear condition.  The calculations, however, may become quite difficult.  Broadened examples like this are treated in Section \ref{enum}, with detailed formulas.  The natural, but nonlinear, interpretation of ``significantly closer'' to mean $d(V,Q_i) < d(V, Q_j) - \delta_{ij}$, where $\delta_{ij} > 0$ is fixed, leads to a totally different problem, unstudied but interesting and difficult: a quadric analog of hyperplane dissections (Section \ref{hyperbolic}).  

We reiterate that our results apply when the reference points $Q_i$ are chosen generically.  A definition of this concept is that, if $Q_1, \hdots, Q_n$ are generic, then shifting them slightly does not change the combinatorial type of the set of intersection flats of the associated arrangement of perpendiculars.  Exactly what genericity entails for the set of reference points is hard to say.  It does imply {\emph {simple}} position---that is, no $d+1$ of the $Q_i$ are affinely dependent---and more strongly, ``ideal general position'', which includes such projective properties as that no line $Q_iQ_j$ parallels a hyperplane determined by $d$ of the points.  However, these are not sufficient for genericity.  What else it may entail is insufficiently known despite our efforts in Section \ref{gp}.


\section{Arrangements of Hyperplanes} \mylabel{arrs}

We begin with some general theory of arrangements of hyperplanes from \cite{FUTA}.  An arrangement $\cH$ is a finite set{\footnote{In this paper all graphs and all sets of hyperplanes, flats, etc., are finite.}} of hyperplanes in Euclidean space ${\bbE}^d$ or real projective space $\bbP^d$,
together with the associated decomposition of the space into connected components, the {\emph {faces}} of $\cH$.  The $d$-dimensional faces are called {\emph {regions}}.  The number of $k$-dimensional faces ($k$-faces) is $f_k(\cH)$.  The number of bounded faces, in the Euclidean case, is $b_k(\cH)$.  
A {\emph {flat of $\cH$}} is any subspace obtained as the intersection of hyperplanes in $\cH$, excluding the null subspace in the Euclidean case.  The number of $k$-dimensional flats ($k$-flats) is $a_k(\cH)$.  The set $\cL(\cH)$ of all flats, when ordered by reverse inclusion, is a 
meet semilattice with $\hat 0$ = the whole space; in the projective case it is a lattice with $\hat 1 = \bigcap \cH$.  The rank function $\rk(x) = \codim x$ makes $\cL(\cH)$ a geometric semilattice (which is a geometric lattice with the interval over an atom deleted; see \cite{WW}) and in the projective case a geometric lattice.

Our primary interest is in Euclidean arrangements of hyperplanes perpendicular to lines, but it is easier to study them if we can also refer to their projective analogs.  Let $\cE$ be an arrangement of $N$ hyperplanes in $\bbE^d$.  The {\emph {projectivization}} $\cE_{\bbP}$ is the arrangement of $N+1$ hyperplanes in $\bbP^d$ consisting of the projective closure $h_\bbP$ of each hyperplane $h$ in $\cE$ and additionally the ideal hyperplane $h_\infty$.  The flats (and faces) of $\cE_{\bbP}$ are those of $\cE$ (actually, the projective closures $s_\bbP$ of all $s \in \cL(\cE)$) and the extra flats (and faces) in $h_\infty$, which reflect parallelisms among the Euclidean flats of $\cE$.

For a ranked partially ordered set $P$ with zero element we write $W_i$ for the number of rank $i$ elements (the {\emph {Whitney number of the second kind}}); thus $W_i(\cL(\cH)) = a_{d-i}(\cH)$.  More important for our purposes are the {\emph {Whitney numbers of the first kind}}, $w_i$. To define them we need the combinatorial M{\"o}bius function of $P$ (see \cite{FCT}), that is, $\mu : P \times P \rightarrow \bbZ$ defined recursively for increasing $y$ by 
\begin{equation*}
\begin{aligned}
\mu (x,y) &= 0 \quad&\text{ if } x \not\leq y, \\
\mu (x,x) &= 1, &\\
\mu (x,y) &= - { \sum_{z:z<y} } \mu (x,z) \quad&\text{ if } x < y .\\
\end{aligned}
\end{equation*}
Then 
$$
w_i = \sum \{ \mu(\hat0,y): \rk(y) = i \}.
$$
We take $P$ to be $\cL(\cH)$; hence 
$$
|w_i(\cL(\cH))| = (-1)^i w_i(\cL(\cH)) = \sum \{ |\mu(\hat 0,y)| : y \in \cL(\cH) \text{ and } \codim y = i \}
$$ 
by Rota's theorem \cite[\S 7]{FCT} that $(-1)^{\rk(\hat1)} \mu(\hat0,\hat1) > 0$ in a geometric lattice.  

With these preliminaries behind us, 
we can state the key facts of enumeration for arrangement of
hyperplanes \cite[Thms.\ A, B, and C]{FUTA}.  For a Euclidean arrangement $\cE$,
\begin{subequations} \mylabel{first-num-def}
\begin{equation}
f_d(\cE)= {\sum_{i=0}^d} | w_i(\cL(\cE))| \qquad\text{ and }
\end{equation}
\begin{equation}
b_d(\cE) = | {\sum_{i=0}^d} w_i(\cL(\cE)) |;
\end{equation}
\end{subequations}
the latter equals $(-1)^d\ {\sum_0^d} w_i(\cL(\cE))$ if $\cE$
has any $0$-flats.  For a nonvoid projective arrangement $\cA$,
\begin{subequations}
\begin{equation}
f_d(\cA) = {\tfrac 12} \sum_{i=0}^{d+1} |w_i(\cL(\cA))|,
\end{equation}
which if $\bigcap\cA = \eset$ is
\begin{equation}
   =  (-1)^k {\sum_{\substack{ i=0 \\ i\text{ even} }}^{d}} w_i(\cL(\cA)) .
\end{equation}
\end{subequations}
We can similarly count faces of each dimension.  Letting the {\emph {doubly indexed Whitney number of the first kind}} be $w_{ij}(P) = \sum \{ \mu(x,y) : \rk x  = i,\ \rk(y) = j \}$ (so, e.g., $w_{0i} = w_i$ and $w_{ii} = W_i$), we have:
\begin{subequations} \mylabel{face-num-def}
\begin{equation}
f_k(\cE) = {\sum_{j=d-k}^d} | w_{d-k,j}(\cL(\cE)) |,
\end{equation}
\begin{equation}
b_k(\cE) = | {\sum_{j=d-k}^d} w_{d-k,j}(\cL(\cE)) | ,
\end{equation}
\end{subequations}
and for a projective arrangement,
\begin{subequations} \mylabel{last-num-def}
\begin{equation}
f_k(\cA) = {\tfrac 12} {\sum_{j=d-k}^{d+1}} | w_{d-k,j}(\cL(\cA)) |
\end{equation}
provided $k > \codim(\bigcap \cA)$; if $\bigcap\cA = \eset$, this
\begin{equation}
    =  (-1)^k {\sum_{\substack{ j=d-k \\ d-j\text{ even} }}^d} |w_{d-k,j}(\cL(\cA))| .
\end{equation}
\end{subequations}
It is important in computations to remember that
$$
\sgn w_{ij} = (-1)^{j-i} 
$$
by Rota's theorem.

With formulas (\ref{first-num-def}--\ref{last-num-def}) in hand we can split 
the task of enumeration into two parts: determining the
structure of $\cL(\cH)$ from a suitable description, and evaluating its Whitney numbers.


\section{Perpendiculars specified by coordinates} \mylabel{perpcoord}

Here is the precise problem we want to solve:  We have a rule that assigns to any $n$-tuple $\bQ=(Q_1, \hdots, Q_n)$ of distinct {\emph {reference points}} in Euclidean $d$-space an arrangement of hyperplanes, 
$\cH = \bigcup_{i<j} \cH_{ij}$, where each $h_{ijk} \in \cH_{ij}$ is perpendicular to the line $Q_i Q_j$.  The rule consists of $\binom{n}{2}$ sets $R_{ij}$ of real numbers and an exponent $\alpha \in \bbR$.  Given an $n$-tuple $(Q_1, \hdots, Q_n)$, $\cH_{ij}$ consists of all those hyperplanes perpendicular to $Q_iQ_j$ whose {\emph {foot}} (the point of intersection with $Q_i Q_j$) is a point $P$ for which the quantity
\begin{equation} \mylabel{foot-num}
\psi_{ij}(P) d(Q_i, Q_j)^{-\alpha} \in R_{ij}.
\end{equation}
(In fact, since $\psi_{ij}(P) d(Q_i, Q_j)^{-\alpha}$ is the same for all points $P \in h$ if $h \perp Q_i Q_j$, we may call $\psi_{ij}(P)$ the {\emph {Pythagorean coordinate of $h$}} as a whole.)
We want to know the number of regions (and also faces and flats) of the arrangement $\cH$.  But since this obviously depends on the choice of $(Q_1, \hdots, Q_n)$, we are content to ask for the answer generically, in the sense discussed in the introduction.

(It is not necessary to assume $Q_i$ and $Q_j$ are distinct if there is no hyperplane $h_{ijk}$ specified perpendicular to $Q_iQ_j$.  However, if we allow reference points that are not distinct, we must define $Q_iQ_j$ to be $\aff(Q_i,Q_j)$, which is a point when $Q_i = Q_j$.)

Particular choices for $\alpha$ correspond to the three ways of specifying perpendiculars we mentioned in the introduction.  When $\alpha =1$, we are specifying twice the signed distance of the foot from the midpoint between $Q_i$ and $Q_j$.  When $\alpha = 2$, we are specifying twice the proportional distance.  When $\alpha = 0$, we specify the Pythagorean coordinate of the foot.  In each case, $R_{ij}$ specifies the values of the chosen coordinate at which we locate the feet of perpendiculars to $Q_iQ_j$.  We may think of $\alpha$ as universally fixed, with various 
choices of $\{ R_{ij}: 1 \leq i < j \leq n \}$ leading to various generic enumerative results.  If every $R_{ij} = \{ 0 \}$, then $\alpha$ is irrelevant and we have the original Good--Tideman arrangement of bisectors.  

The fact that there is a unique generic answer to our numerical question, and indeed that the structure of $\cL(\cH)$ itself can be determined generically, is significant.  In contrast it seems intuitively clear that, if we look at the isomorphism type of the whole arrangement $\cH$ (as a cell complex whose cells are the faces of $\cH$), there is no one generic type.  I expect that, when $n$ is large compared with $d$ (written $n \gg d$), there are many choices of $\bQ$ that are generic, in the sense of being deformable without altering the isomorphism type of $\cH$, and that yet yield mutually nonisomorphic arrangements.  At present this is unproved; we merely offer an example.

\begin{exam} \mylabel{Xnonisom} 
In Figure \ref{Fnonisom} we see two generic planar arrangements of all perpendicular bisectors from $4$ reference points.  Their face complexes are simplicial: all regions are triangular.  By comparing the bounded parts one can see that they are nonisomorphic.  (The reference points have different convexity types, or oriented matroids: in the first arrangement but not the second, one point is in the convex hull of the others.  This may be significant.)
\placefigure{nonisom}
\begin{figure} \mylabel{Fnonisom}
\vbox to 5truein{}
\begin{center}
\includegraphics{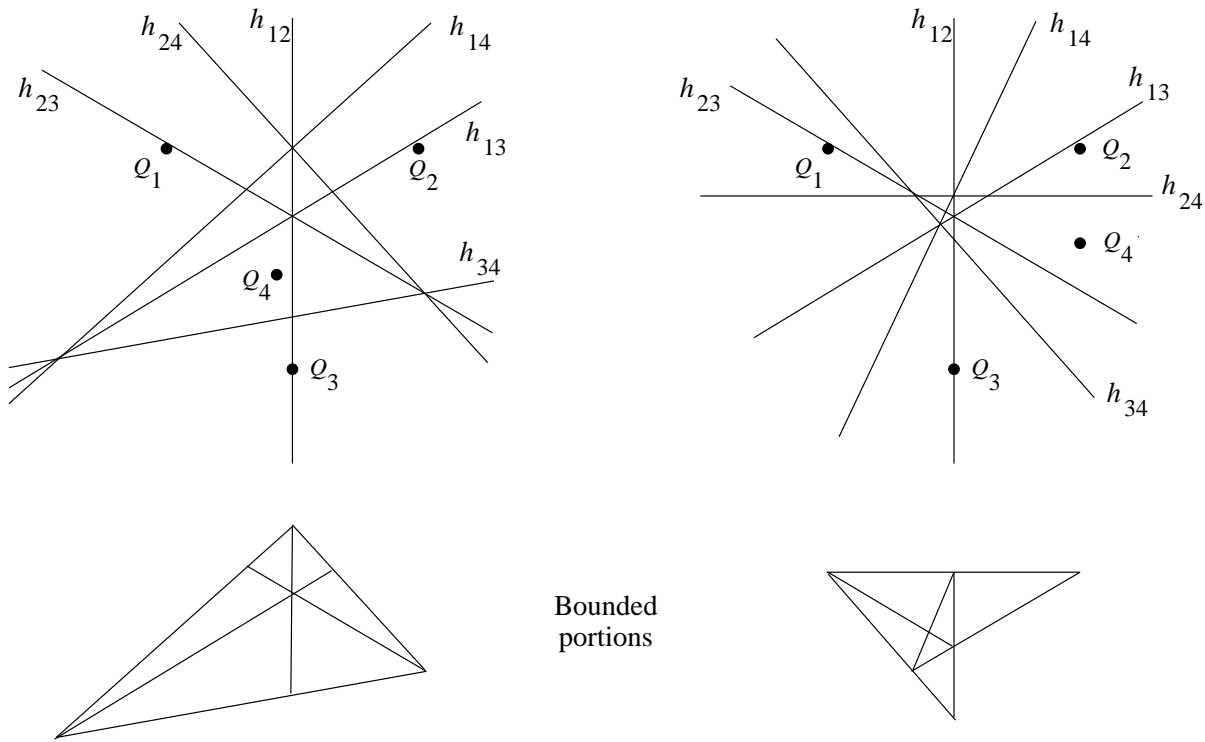}
\end{center}
\caption{Two generic but nonisomorphic arrangements of bisectors in the plane.}
\end{figure}

\end{exam}

\begin{resprob} \mylabel{RPface-types} 
Fix $d$, $\{ R_{ij} \}_{ij}$, and $\alpha$.  (a) Show that the face complex of $\cH$, the arrangement of perpendiculars determined by these data, is (with minor exceptions) not unique even when $\bQ$ is generic, if $n \gg d$.  (b) Estimate a lower bound, at least, for the number of distinct isomorphism types of generic face complexes.  (c) How large must $n$ be for the generic face complex to be nonunique?  (I suggest $n > d+1$.)

The results of Section \ref{nonpyth} suggest that the answers depend on whether $\alpha$ is 0 or not, but not otherwise on its value.
\end{resprob}

\begin{resprob} \mylabel{RPface-types-om} 
Is there a relationship between the structure of the face complex of $\cH$ and the oriented matroid structure of generic $Q_1, \hdots, Q_n$?
\end{resprob}

\begin{exam} \mylabel{Xdeform} 
\emph{(Affinographic arrangements and deformations of a Coxeter arrangement.)}  
A special case of particular importance is that in which $d=n$ and we choose an origin $O$ and reference points $Q_i$ so that $\vec{OQ_1},\cdots,\vec{OQ_n}$ are orthogonal vectors of length $\sqrt{2}$.  These vectors determine coordinates $(x_1,\hdots,x_n)$.  Let $P$ have coordinate vector $x$.  Then $\psi_{ij}(P) = \|x - \sqrt{2} e_i\|^2 - \|x-\sqrt{2}e_j\|^2 = x_j - x_i$ (where $e_i = \vec{OQ_i}/\sqrt{2}$, the unit basis vector), so the hyperplane with Pythagorean equation $\psi_{ij}(P) =\alpha$ has $x$-equation $x_j - x_i =\alpha$.  A Pythagorean arrangement is therefore a system of hyperplanes given by equations of the form $x_j - x_i =\alpha$ for $\alpha$ in some fixed set $R_{ij}$, specified for each $(i,j)$ with $0 < i < j \leq n$.  Such an arrangement we call \emph{affinographic}, since the hyperplanes are affine translates of those of the arrangement $\cA_n = \{x_j - x_i = 0 : 0 < i < j\leq n\}$, which represents the polygon matroid of the complete graph $K_n$.  Especially when the constant terms are integers, affinographic arrangements are known as {\it deformations of the Coxeter arrangement} $\cA_n$ \cite{Ath,P-S,Stan}.  Thus our results apply to deformations of $\cA_n$, and conversely, known characteristic polynomials of various deformations of $\cA_n$ can be applied to other Pythagorean arrangements as explained in Section \ref{invar} and illustrated in several examples of Section \ref{enum}.

Since all affinographic hyperplanes are orthogonal to $x_1 + \cdots + x_n = \sqrt{2}$, we can take the cross-section of an affinographic arrangement by the latter hyperplane.  This contains all the reference points, so it gives essentially the same arrangement in dimension $n-1$, though without the affinographic equations $x_j - x_i = \alpha$.
\end{exam}


\section{Perpendiculars via gain graphs} \mylabel{gg}

A more convenient way to locate perpendiculars is by a gain graph.  A {\emph {graph}} $\Gamma$ consists of a vertex set $V = V(\Gamma)$ and an edge set $E = E(\Gamma)$.  Multiple edges are permitted, indeed encouraged, but normally we allow only links---that is, edges with two distinct endpoints.  $V(e)$ denotes the set of endpoints of an edge $e$.  The {\emph{(connected) components}} of an edge set $S \subseteq E$ are the maximal subgraphs of $(V,S)$ that are connected by edges of $S$, including any isolated vertices, which are called {\emph {trivial components}}; $c(S)$ is the number of components of $S$.  A {\emph {real, additive gain graph}} $\Phi$ (sometimes, for brevity, called here just a ``gain graph'') consists of a graph $\Gamma = (V, E)$ and a {\emph {gain function}}, a mapping
\[
\phi : \big\{ (e; v_1,v_2) : e \in E,\ \{v_1,v_2\} = V(e) \big\} \rightarrow {\bbR}^+
\]
satisfying
\[
\phi(e; v_2, v_1) = -\phi(e; v_1, v_2).
\]
($\bbR^+$ is the additive group of real numbers.)  What this alternating property means is that the gain of $e$ from $v_2$ to $v_1$ is the inverse of that of $e$ from $v_1$ to $v_2$.  We define an edge set or subgraph to be {\emph {balanced}} if, for every circle (simple closed path) in it, the gains (taken in a consistent direction) sum to $0$.

There is a matroid theory of gain graphs extending that for ordinary graphs \cite[Part II]{BG}.  The complete lift matroid is implicit throughout our work here but we refer to it explicitly only in Sections \ref{matroids} (where we state a definition), \ref{induced}, and \ref{enum}.

We must show how gain graphs are related to arrangements of perpendiculars.

In one direction, suppose we start with reference points $\bQ = (Q_1, \hdots, Q_n)$ and an arrangement $\cH$ of perpendiculars based on $\bQ$.  We assume that each hyperplane in $\cH$ is associated with a specific reference line $Q_iQ_j$.  (The association need be explicit only if two such lines are parallel, which will never happen if $\bQ$ is generic.)  The {\emph {Pythagorean gain graph}} of $\cH$ (with respect to the given reference points), $\Psi_\bQ(\cH)$, has vertex set $\{ 1, 2, \hdots, n \}$ and an edge $e(h)$ between vertices $i$ and $j$ for each hyperplane $h$ associated with the reference line $Q_iQ_j$.  The gain of $e(h)$ is the Pythagorean coordinate of $h$:
\[
\psi(e(h);i,j) = \psi_\bQ(e(h);i,j) = \psi_{ij}(P),
\]
where $P$ is any point on $h$.

More important is the other direction.  We are given a real, additive gain graph $\Phi$.  The {\emph {Pythagorean hyperplane arrangement}} of $\Phi$ with reference points $Q_1, \hdots, Q_n$, written $\cH(\Phi; Q_1, \hdots, Q_n)$ or $\cH(\Phi; \bQ)$, is the arrangement of perpendiculars which has, for each edge $e \in E$, a hyperplane $h(e)$ perpendicular to $Q_iQ_j$, where $\{i,j\} = V(e)$, whose Pythagorean coordinate on $Q_iQ_j$ is $\phi(e;i,j)$.  Thus the Pythagorean gain graph of $\cH(\Phi; \bQ)$ is $\Phi$.  We think of $\Phi$ as a rule that specifies an arrangement of perpendiculars for each choice of reference points, equivalent to the real number sets $R_{ij}$ of the previous section but (as we shall see shortly in Section \ref{pyth}) more natural.

More generally we may wish to specify not the Pythagorean coordinates but some modification such as signed distance or proportional coordinates.  Given $\alpha \in \bbR$ and $\Phi$, the arrangement of perpendiculars $\cH(\alpha,\Phi;\bQ)$ is defined as $\cH(\Psi;\bQ)$ where $\Psi$ has gain function $\psi(e;i,j) = d(Q_i,Q_j)^\alpha \phi(e;i,j)$.  Thus the Pythagorean coordinate of a hyperplane $h(e)$ in $\cH(\alpha,\Phi;\bQ)$ is $d(Q_i,Q_j)^\alpha \phi(e;i,j)$ and $\Psi = \Psi_\bQ(\cH(\alpha,\Phi;\bQ))$, the Pythagorean gain graph.  We think of $(\alpha,\Phi)$ as a modified Pythagorean rule that determines an arrangement of perpendiculars on any given reference points.  Again $\Phi$ is equivalent to the real number sets of Section \ref{perpcoord}, but it is not normally the Pythagorean gain graph of the arrangement unless $\alpha = 0$.

In only one place we allow loops to appear, momentarily: when we contract, in Section \ref{induced}.  A loop's geometric interpretation depends on its gain.  A loop with gain $0$ corresponds to the whole space $\bbE^d$, which one might call the ``degenerate hyperplane''.  If $\bbE^d$ is among the ``hyperplanes'' of $\cH$, our convention is that there are no regions, since the complement of the hyperplanes is void; but there still are $d$-faces, since they are regions of the flat $s = \bbE^d$.  A loop with nonzero gain corresponds to the ideal hyperplane $h_\infty$, hence to nothing in $\bbE^d$.  For that reason we always discard such loops.

\begin{exam} \mylabel{Xpythgeneric}
Figure \ref{Fpythgeneric} shows a gain graph $\Phi$, four planar reference points, and the associated Pythagorean hyperplane arrangement $\cH(\Phi;\bQ)$.  We write $e_{ij}$ for an edge with endpoints $i$ and $j$ and, when necessary, $e_{ij}(\rho)$ to distinguish an edge $e_{ij}$ with gain $\rho = \phi(e_{ij};i,j)$.  Our $\cH(\Phi;\bQ)$ is generic because it has the generic intersection pattern required by Theorem \ref{T1}: since $\Phi$ has the single balanced circle $e_{12}e_{24}(3)e_{14}$, $h(e_{12}) \cap h(e_{24}(3)) \cap h(e_{14})$ is a multiple point of intersection; furthermore, generically there can be no other multiple points and no parallel flats except those implied by parallel edges like $e_{24}(0)$ and $e_{24}(3)$, which make $h(e_{24}(0)) \parallel h(e_{24}(3))$.  (The fact that $h(e_{24}(3))$ passes through $Q_3$ in this example is a coincidence that implies nothing about the hyperplane arrangement.)
\placefigure{pythgeneric}
\begin{figure} \mylabel{Fpythgeneric}
\vbox to 5.5truein{}
\begin{center}
\includegraphics{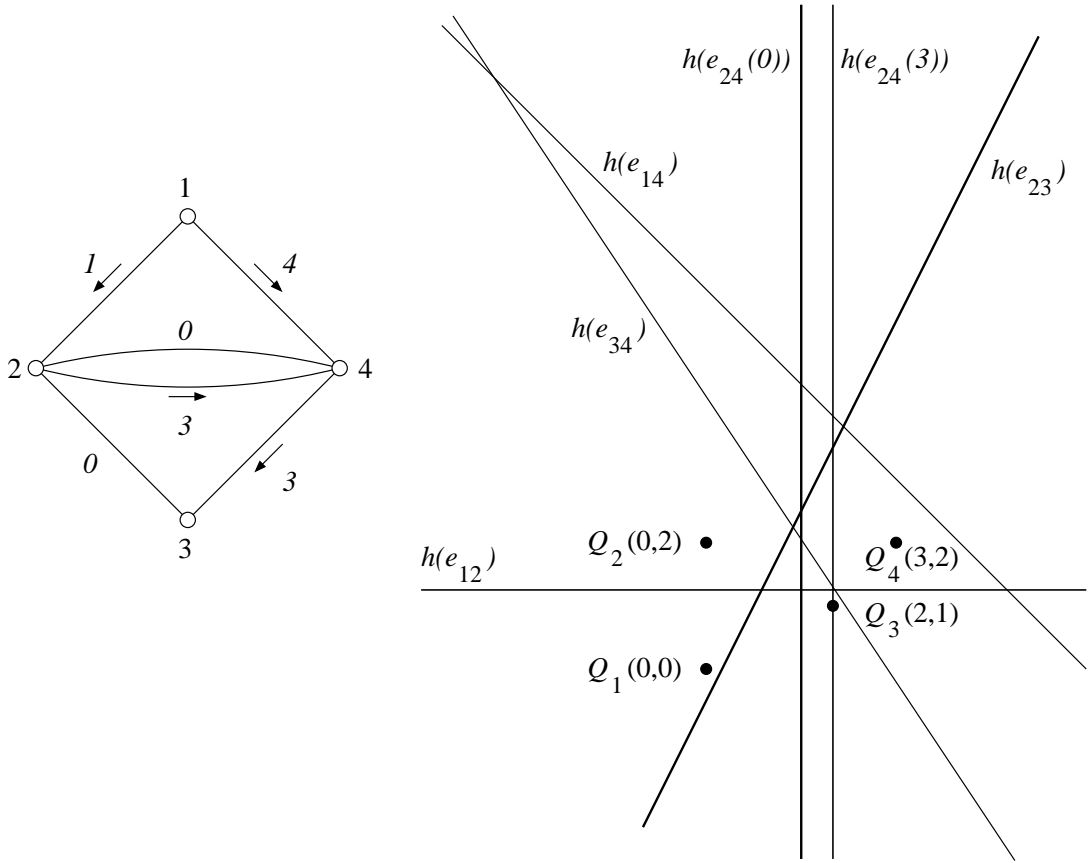}
\end{center}
\caption{A gain graph and an associated generic Pythagorean line arrangement.  (In the gain graph, the number by an edge is its gain.  The arrow indicates the direction for reading the gain, so reversing direction negates the gain.  The arrow may be omitted if the gain is $0$.)}
\end{figure}

The foot $P$ of a hyperplane $h(e_{ij})$ is positioned so that $\psi_{ij}(P) = \phi(e_{ij};i,j)$.  In practice it may be easier to locate $P$ by its signed distance from the midpoint $M_{ij}$ of $Q_iQ_j$: this is $d(M_{ij},P)$, taken as positive toward $Q_j$ and negative toward $Q_i$.  The formula for signed distance from a Pythagorean gain graph $\Phi$ is
$$
d(M_{ij},P) = \tfrac12 \phi(e_{ij};i,j)/d_{ij},
$$
where $d_{ij} = d(Q_i,Q_j)$.
\end{exam}\smallskip

\begin{exam} \mylabel{Xsdgeneric} 
In Figure \ref{Fsdgeneric} is a generic non-Pythagorean arrangement $\cH(1,\Phi;\bQ)$.  It is described by a gain graph $\Phi$ interpreted as specifying twice the signed distance of $h(e)$; that is, $d(M_{ij},P) = \frac12 \phi(e;i,j)$.  (See the previous example for notation.)  In the Pythagorean gain graph $\Psi = \Psi_\bQ$ of $\cH(1,\Phi;\bQ)$, the gains are $\psi(e;i,j) = d_{ij}\phi(e;i,j)$.  The only concurrence of lines is that of the three bisectors, which correspond to the balanced triangle in $\Psi$ made up of the edges with gain $0$.
\placefigure{sdgeneric}
\begin{figure} \mylabel{Fsdgeneric}
\vbox to 5truein{}
\begin{center}
\includegraphics{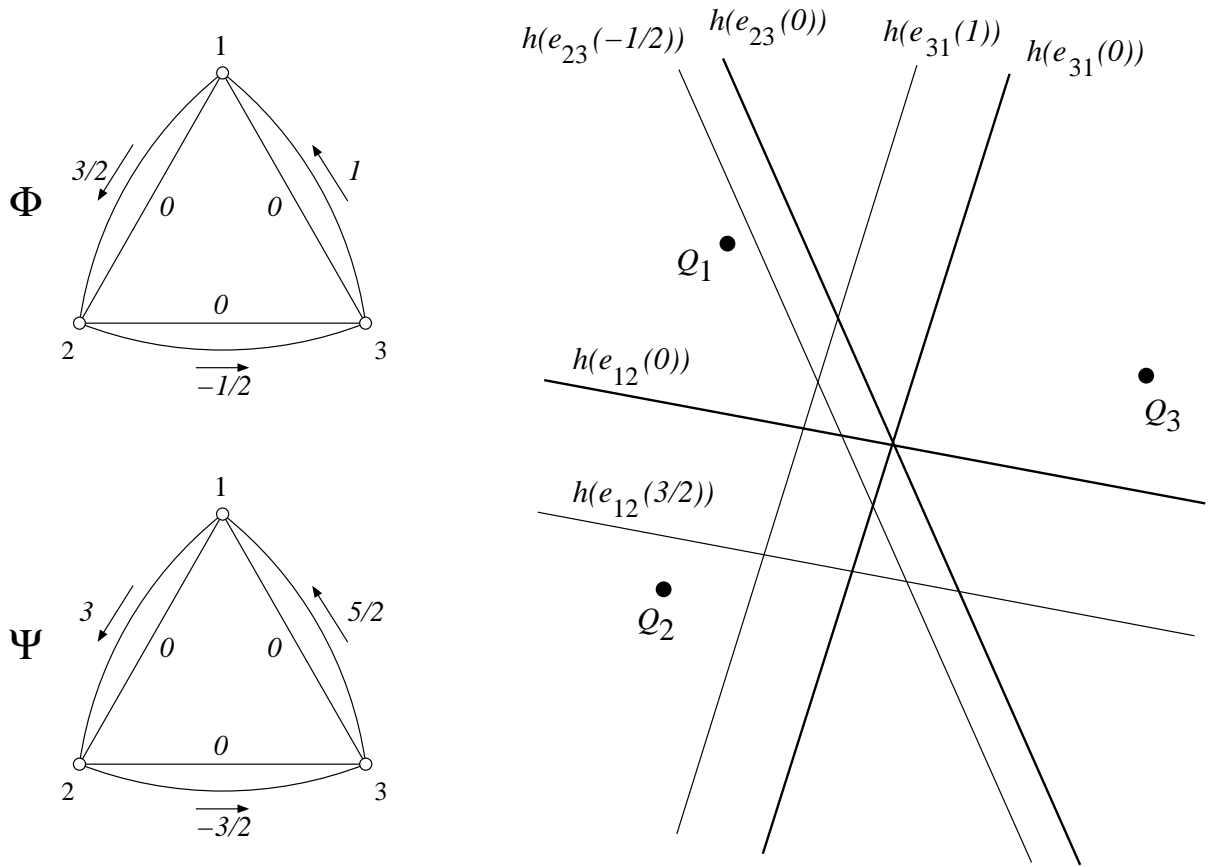}
\end{center}
\caption{A gain graph $\Phi$; an associated generic non-Pythagorean arrangement $\cH(\alpha,\Phi;\bQ)$ having $n=3$, $d=2$, and $\alpha = 1$; and the Pythagorean gain graph $\Psi=\Psi_\bQ$ of the arrangement.  The distances among the reference points are $d_{12} = 2$, $d_{23} = 3$, and $d_{13} = \frac52$.}
\end{figure}
\end{exam}


\section{Pythagorean perpendiculars} \mylabel{pyth}

Here is our central result.

\begin{thm} \mylabel{T1} 
Let $n > d \geq 1$, let $\Phi$ be a real, additive gain graph on vertex set $\{ 1, \hdots, n \}$, and for distinct points $Q_1, \hdots, Q_n \in {\bbE}^d$ let $\cH = \cH(\Phi; Q_1, \hdots, Q_n)$ be the arrangement of perpendiculars based on $Q_1, \hdots, Q_n$ with $\Phi$ as its Pythagorean gain graph.  Suppose $Q_1, \hdots, Q_n$ are generic.  Then the intersection of a subset $\cS \subseteq \cH$ corresponding to an edge set $S$ of $\Phi$ with $n-m$ connected components is void if $S$ is unbalanced or if $m>d$; otherwise it is a nonempty flat of dimension $d-m$.  

Furthermore, two subsets $\cS_1$ and $\cS_2$ have the same nonvoid intersection if and only if $S_1 \cup S_2$ is balanced and the connected components of $S_1$ and $S_2$ each partition the vertices (into at least $n-d$ parts) in the same way.
\end{thm}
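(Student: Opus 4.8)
The plan is to derive the \emph{Furthermore} clause entirely from the first assertion of the theorem, using only two elementary principles: that a nonempty affine subspace contained in another of equal dimension must coincide with it, and that in the partition lattice a refinement with the same number of blocks is an equality. I write $\pi(S)$ for the partition of $\{1,\dots,n\}$ into the connected components of an edge set $S$. I would first record two facts that make the reduction go: that $\cS_1\cup\cS_2$ is precisely the subarrangement of $\cH$ corresponding to $S_1\cup S_2$, and that $\pi(S_1\cup S_2)=\pi(S_1)\vee\pi(S_2)$, since two vertices share a component of $S_1\cup S_2$ exactly when a path through $S_1$ or through $S_2$ joins them.

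For the forward implication I would assume $\bigcap\cS_1=\bigcap\cS_2=F\neq\eset$ and note that then $\bigcap(\cS_1\cup\cS_2)=F\neq\eset$ as well. The first assertion immediately forces $S_1\cup S_2$ to be balanced with rank $m'\le d$, which already yields the balance condition and, via $m'\le d$, the ``at least $n-d$ parts'' clause. To match the partitions I would compare dimensions: writing $n-m_1$, $n-m_2$, $n-m'$ for the numbers of components of $S_1$, $S_2$, $S_1\cup S_2$, the dimension formula gives $d-m_1=\dim F=d-m_2=d-m'$, hence $m_1=m_2=m'$. Thus $\pi(S_1)$ refines $\pi(S_1\cup S_2)$ while having the same number of blocks, so the two coincide; the same holds for $S_2$, and therefore $\pi(S_1)=\pi(S_2)$.

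For the converse I would assume $S_1\cup S_2$ balanced and $\pi(S_1)=\pi(S_2)=\pi$, a partition into $n-m$ blocks with $m\le d$. Each $S_i$ is balanced as a subset of a balanced set, and $\pi(S_1\cup S_2)=\pi\vee\pi=\pi$ again has $n-m$ blocks; so the first assertion makes $\bigcap\cS_1$, $\bigcap\cS_2$, and $\bigcap(\cS_1\cup\cS_2)$ all nonempty flats of the common dimension $d-m$. Since $\bigcap(\cS_1\cup\cS_2)=\bigcap\cS_1\cap\bigcap\cS_2$ sits inside each of $\bigcap\cS_1$ and $\bigcap\cS_2$ with equal dimension, it equals both; hence $\bigcap\cS_1=\bigcap\cS_2$.

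The step I expect to be the crux is the forward passage from equality of dimensions to equality of partitions. A priori the flat $F$ need not detect the finer component structure distinguishing $S_1$ from $S_2$, and it is precisely the genericity of $\bQ$—packaged inside the dimension formula $\dim=d-m$ of the first assertion—that rules out accidental extra concurrences and pins down $m_1=m_2=m'$. In effect all the geometric content, including every appeal to genericity, is quarantined in the first assertion; granting that, the equal-intersection criterion reduces to a short matter of affine geometry and partition combinatorics.
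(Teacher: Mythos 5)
Your argument establishes only the \emph{Furthermore} clause; the first assertion of Theorem \ref{T1}, which you ``quarantine'' and grant outright, is the actual content of the theorem, and the proposal contains no proof of it. In the paper that assertion absorbs the entire lemma chain of Section \ref{pyth}: Lemma \ref{L2a} (around a circle the Pythagorean coordinates telescope, $\psi_{l+1,1}(P) = -[\psi_{12}(P)+\cdots+\psi_{l,l+1}(P)]$, so a balanced circle makes one hyperplane redundant while an unbalanced circle empties the intersection), Lemma \ref{L3} (minimal subsets realizing a flat correspond to forests), Lemma \ref{L4} (ideal general position is an open dense condition on $\bQ$), Lemma \ref{L4b} (under ideal general position a forest of $m \le d$ edges meets in a flat of dimension exactly $d-m$), Lemma \ref{L4c} (for a forest of more than $d$ edges the intersection is generically void, proved by a deformation moving one endpoint off a cylinder), and Lemma \ref{L5} assembling these; the proof of the theorem then intersects the finitely many open dense sets $\cG(S)$, $S \subseteq E(\Phi)$, to produce a single open dense class of $\bQ$ valid for all edge sets simultaneously. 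None of this is a formality: Example \ref{Xnongeneric} shows that ideal general position alone does \emph{not} yield the first assertion, so the separate genericity argument of Lemma \ref{L4c} is indispensable; and ``generic'' is not a hypothesis one can simply cite, since part of what the theorem asserts is that an open dense class of reference points with these intersection properties exists, so its construction cannot be waived.

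That said, the half you do prove is correct and is essentially the paper's own argument: the paper likewise observes that $\bigcap\cS_1 = \bigcap\cS_2 \neq \eset$ forces both to equal $\bigcap(\cS_1\cup\cS_2)$, whence $S_1\cup S_2$ is balanced, $n-c(S_1\cup S_2) \le d$, and the dimension count gives $c(S_1\cup S_2) = c(S_1) = c(S_2)$, so each $\pi(S_i)$ refines $\pi(S_1\cup S_2)$ with equally many blocks and the partitions coincide. Your explicitly written converse --- hereditary balance of $S_i \subseteq S_1\cup S_2$, the join identity $\pi(S_1\cup S_2) = \pi(S_1)\vee\pi(S_2)$, and equality of nested nonempty flats of equal dimension --- is a small genuine improvement, since the paper leaves that direction of the ``if and only if'' implicit. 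But as submitted, the proposal proves only the easy half of the statement.
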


Note that whether $Q_1, \hdots, Q_n$ are generic depends on the particular $\Phi$.  What we assert is that for each $\Phi$ a generic choice exists and has certain describable intersection properties.

The proof depends on several lemmas.  The first two show, by demonstrating the close connection of balanced circles in $\Psi_\bQ(\cH)$ to nonvoid hyperplane intersections, exactly why gain graphs are such a natural way to describe Pythagorean arrangements.

In the proof we assume that $Q_i$ and $Q_j$ are distinct if $i$ and $j$ are adjacent in $\Phi$, but we assume nothing if $i$ and $j$ are nonadjacent.

\begin{lem} \mylabel{L2a} 
Suppose $C$ is a circle in $\Phi$ and $\cC$ is the corresponding set of hyperplanes in $\cH(\Phi;\bQ)$.  If $C$ is unbalanced, $\bigcap \cC = \eset$.  If $C$ is balanced and $e \in C$, then 
$\bigcap \cC = \bigcap \big( \cC \setminus \{h(e)\} \big)$.
\end{lem}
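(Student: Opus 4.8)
The plan is to exploit the additivity of the Pythagorean coordinate. Expanding the definition gives $\psi_{ij}(P)=d(P,Q_i)^2-d(P,Q_j)^2 = 2\,P\cdot(Q_j-Q_i)+|Q_i|^2-|Q_j|^2$, so each $\psi_{ij}$ is an affine function of $P$, nonconstant exactly when $Q_i\neq Q_j$ (which we assume for adjacent $i,j$); hence each $h(e)$ is a genuine hyperplane. The key structural fact is the cocycle identity $\psi_{ij}(P)+\psi_{jk}(P)=\psi_{ik}(P)$, immediate since the $d(P,Q_j)^2$ terms cancel. Equivalently, $\psi_{ij}(P)=d(P,Q_i)^2-d(P,Q_j)^2$ exhibits the gain-at-$P$ as the difference of a single vertex potential $i\mapsto d(P,Q_i)^2$; that is, the values $\psi_{ij}(P)$ are, for every fixed $P$, a balanced (coboundary) gain function on $\Phi$.

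Next I would run this identity around the circle. Orienting $C$ as $v_0\,v_1\cdots v_{m-1}\,v_0$ and writing $\rho_k=\phi(e_k;v_{k-1},v_k)$ for the gains read in this direction, the intersection $\bigcap\cC$ is exactly the solution set of the linear system $\psi_{v_{k-1}v_k}(P)=\rho_k$, $k=1,\dots,m$. Telescoping gives $\sum_{k=1}^m\psi_{v_{k-1}v_k}(P)=\sum_{k=1}^m\big(d(P,Q_{v_{k-1}})^2-d(P,Q_{v_k})^2\big)=0$ identically in $P$. Summing the $m$ equations therefore forces $\sum_{k=1}^m\rho_k=0$, which is precisely the condition that $C$ be balanced; so if $C$ is unbalanced the system is inconsistent and $\bigcap\cC=\eset$.

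Finally, suppose $C$ is balanced, so $\sum_k\rho_k=0$, and fix $e=e_\ell$. For the reduction I would verify that the $\ell$th equation is implied by the others: if $P$ satisfies $\psi_{v_{k-1}v_k}(P)=\rho_k$ for all $k\neq\ell$, then the telescoping identity gives $\psi_{v_{\ell-1}v_\ell}(P)=-\sum_{k\neq\ell}\psi_{v_{k-1}v_k}(P)=-\sum_{k\neq\ell}\rho_k=\rho_\ell$, the last step using balance. Hence deleting $h(e)$ does not enlarge the solution set, and $\bigcap\cC=\bigcap(\cC\setminus\{h(e)\})$.

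There is no real obstacle here; the whole argument is the single observation that the $\psi_{ij}$ form a coboundary, so their values around any circle satisfy one linear relation. The only point that needs care is sign bookkeeping: one must read every gain in one consistent traversal direction around $C$, using $\phi(e;v,w)=-\phi(e;w,v)$ wherever the traversal runs against an edge, so that the balance sum $\sum_k\rho_k$ and the telescoped $\psi$-sum are indexed the same way.
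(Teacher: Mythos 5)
Your proof is correct and is essentially the paper's own argument: the paper likewise uses the telescoping identity $\psi_{l+1,1}(P) = -[\psi_{12}(P) + \cdots + \psi_{l,l+1}(P)]$, valid for every point $P$ because the Pythagorean coordinates come from the vertex potential $i \mapsto d(P,Q_i)^2$, and concludes that the intersection of all but one hyperplane of $\cC$ lies entirely inside, or entirely misses, the remaining hyperplane according as $C$ is balanced or unbalanced. Your additional remarks (the explicit affine form of $\psi_{ij}$, the coboundary formulation, and the orientation bookkeeping) are harmless elaborations of the same single observation.
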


\begin{proof} 
Take $C = e_{12}e_{23}\cdots e_{l,l+1}e_{l+1,1}$ and $e = e_{l+1,1}$.  Let $s = \bigcap (\cC\backslash \{h(e)\})$.  For any point $P \in s$,
\begin{equation}\mylabel{Epsam}
\psi_{l+1,1}(P) = -[\psi_{12}(P) + \cdots + \psi_{l,l+1}(P)]
\end{equation}
by \eqref{E2}.  The numbers $\psi_{i,i+1}(P) = \psi(e_{i,i+1})$ for $i = 1,2,\hdots,l$ by definition of $h(e_{i,i+1})$.  If $C$ is balanced, then by \eqref{Epsam} all of $s$ lies in $h(e_{l+1,1})$.  If not, no point of $s$ is in $h(e_{l+1,1})$.
\end{proof}

\begin{lem} \mylabel{L2}
In the situation of Lemma \ref{L2a}, suppose $Q_1, \ldots, Q_{l+1}$ are affinely independent.  If $C$ is balanced, then $\bigcap \cC$ is a $(d-l)$-flat.\footnote{Lemmas \ref{L2a} and \ref{L2} in the planar case of three vertices of a triangle with one perpendicular's foot on each extended edge of the triangle have been known to triangle geometers.  See for instance \cite{Clem} and \cite[\S 1.5, Exer.\ 7]{Cox1}.  Although \cite{Clem} is the earliest source I know, in Clark Kimberling's opinion (personal information, 1984) the planar theorem is many decades older; that is not hard to believe since it has the appearance of an easy exercise.}
\end{lem}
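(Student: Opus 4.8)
The plan is to use Lemma \ref{L2a} to discard one hyperplane and then read off the codimension of the remaining $l$ hyperplanes directly from their normal directions. Since $C = e_{12}e_{23}\cdots e_{l,l+1}e_{l+1,1}$ is balanced, Lemma \ref{L2a} gives $\bigcap \cC = \bigcap\big(\cC \setminus \{h(e_{l+1,1})\}\big)$, so it suffices to show that the $l$ hyperplanes $h(e_{12}), \ldots, h(e_{l,l+1})$ meet in a flat of dimension $d-l$. Note that affine independence of $Q_1,\ldots,Q_{l+1}$ forces $l \leq d$, so this dimension is nonnegative, consistent with the assertion.

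First I would identify the normal direction of each hyperplane. Expanding $\psi_{i,i+1}(P) = d(P,Q_i)^2 - d(P,Q_{i+1})^2$ via \eqref{E2} shows that the equation $\psi_{i,i+1}(P) = \phi(e_{i,i+1})$ is equivalent to $\langle Q_{i+1} - Q_i,\, P \rangle = c_i$ for a constant $c_i$; thus $h(e_{i,i+1})$ has normal vector $Q_{i+1} - Q_i$, and $\bigcap\big(\cC \setminus \{h(e_{l+1,1})\}\big)$ is the solution set of a linear system whose coefficient matrix has the $l$ vectors $Q_{i+1}-Q_i$ ($1 \leq i \leq l$) as its rows. The key step is then to observe that these $l$ normal vectors are linearly independent: the families $\{Q_{i+1}-Q_i : 1 \leq i \leq l\}$ and $\{Q_{i+1}-Q_1 : 1 \leq i \leq l\}$ are related by an invertible triangular change of basis, hence span the same subspace, and $Q_2 - Q_1, \ldots, Q_{l+1}-Q_1$ are linearly independent precisely because $Q_1, \ldots, Q_{l+1}$ are affinely independent. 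Therefore the coefficient matrix has rank exactly $l$, the system is consistent, and its solution set is a nonempty affine subspace of dimension $d-l$, which is $\bigcap \cC$.

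I do not expect a serious obstacle here; the only point requiring care is the passage from affine independence of the points to linear independence of the edge vectors $Q_{i+1}-Q_i$, which is exactly what pins the codimension at $l$ rather than something smaller. Balance of $C$, supplied through Lemma \ref{L2a}, plays the complementary role: it guarantees that reimposing the discarded hyperplane $h(e_{l+1,1})$ does not cut the dimension further (equivalently, does not empty the intersection), so that the $(d-l)$-flat found from the first $l$ hyperplanes really is all of $\bigcap \cC$.
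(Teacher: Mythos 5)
Your proof is correct and follows essentially the same route as the paper's: use Lemma \ref{L2a} together with balance of $C$ to discard one hyperplane, then deduce from the affine independence of $Q_1,\ldots,Q_{l+1}$ that the remaining $l$ hyperplanes have linearly independent normals and hence meet in a $(d-l)$-flat. You simply spell out the details (the normals $Q_{i+1}-Q_i$ and the triangular change of basis) that the paper leaves implicit.
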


\begin{proof} 
Because $Q_1,\ldots,Q_{l+1}$ are affinely independent, $l$ hyperplanes perpendicular to $Q_1Q_2,$ $\ldots,Q_lQ_{l+1}$ are linearly independent.  Thus their intersection is $(d-l)$-dimensional.  Now the lemma follows from Lemma \ref{L2a}.
\end{proof}

\begin{lem} \mylabel{L3}
Suppose $\bigcap \cH(\Phi;\bQ)$ has nonempty intersection $s$.  Then
\begin{enumerate}
\item[(a)] $\Phi$ is balanced, and
\item[(b)] $s$ is the intersection of a subset \,$\cS$ of\, $d - \dim s$ hyperplanes.  Any such subset corresponds to a forest $S$ in $\Phi$.
\end{enumerate}
\end{lem}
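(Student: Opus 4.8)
The plan is to get (a) directly from Lemma \ref{L2a} and to prove (b) by translating the codimension of a hyperplane intersection into the rank of the associated normal vectors, using the fact that the normal to $h(e_{ij})$ is parallel to $Q_j-Q_i$. For (a) I would argue contrapositively. If $\Phi$ is unbalanced, then by definition it contains an unbalanced circle $C$; let $\cC\subseteq\cH$ be the corresponding hyperplanes. By the unbalanced case of Lemma \ref{L2a}, $\bigcap\cC=\eset$, and since $\cC\subseteq\cH$ we get $s=\bigcap\cH\subseteq\bigcap\cC=\eset$, contradicting $s\neq\eset$. Hence $\Phi$ is balanced.

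For the existence half of (b), write $k=\codim s=d-\dim s$. Since $s=\bigcap\cH$ is nonempty, its codimension equals the rank of the set of normals $\{\,Q_j-Q_i : e_{ij}\in E\,\}$; let $U$ be their span, so $\dim U=k$. I would then choose $k$ hyperplanes $\cS\subseteq\cH$ whose normals form a basis of $U$. These $k$ normals are independent, so $\bigcap\cS$ has codimension exactly $k$; and $\bigcap\cS\supseteq\bigcap\cH=s\neq\eset$, so $\bigcap\cS$ is a nonempty flat of dimension $d-k=\dim s$ that contains $s$. Two flats of equal dimension, one inside the other, coincide, so $s=\bigcap\cS$, an intersection of $d-\dim s$ hyperplanes as claimed.

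For the forest half of (b), let $\cS$ be any subset of $d-\dim s=k$ hyperplanes with $\bigcap\cS=s$, corresponding to an edge set $S$ with $|S|=k$. Because $\bigcap\cS$ has codimension $k$ while there are only $k$ normals, those normals must be linearly independent. The key observation is that the edge vectors around any circle telescope to zero: for a circle $e_{i_1i_2}e_{i_2i_3}\cdots e_{i_li_1}$ the signed sum $(Q_{i_2}-Q_{i_1})+\cdots+(Q_{i_1}-Q_{i_l})=0$ is a linear dependence among the corresponding normals (and for a pair of parallel edges the two normals are already proportional). Hence a set of independent normals cannot contain a circle, so $S$ is acyclic, i.e.\ a forest.

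I expect the only points needing care to be the identification $\codim(\bigcap\cH)=\rk\{\,Q_j-Q_i\,\}$ together with the equal-dimension-plus-containment step that upgrades $s\subseteq\bigcap\cS$ to equality. The genuinely structural content is the telescoping identity, which is what ties the combinatorics of $\Phi$ to the geometry of $\cH$ and forces the forest conclusion; notably this argument, like (a), uses only Lemma \ref{L2a} and linear algebra and so requires no genericity hypothesis on $\bQ$.
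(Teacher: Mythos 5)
Your proof is correct, and while your part (a) is essentially the paper's (both are just the unbalanced case of Lemma \ref{L2a} applied to an unbalanced circle), your part (b) takes a genuinely different route.  The paper obtains existence of $\cS$ with $|\cS| = \codim s$ from the modular law of dimension in $\bbP^d$ applied to $s_\bbP$, and gets the forest conclusion combinatorially from Lemma \ref{L2a}: if $S$ contained a balanced circle, one could delete an edge of that circle without changing $\bigcap\cS$, contradicting the minimality of $|\cS|$, while unbalanced circles are already excluded by (a).  You instead replace both steps by the linear algebra of normal vectors: for a nonempty intersection, codimension equals the rank of the normals, which simultaneously yields existence (choose hyperplanes whose normals form a basis, then upgrade $s \subseteq \bigcap\cS$ to equality via equal dimension) and acyclicity (independent normals cannot support the telescoping dependence $\sum_t (Q_{i_{t+1}} - Q_{i_t}) = 0$ around a circle).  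What each approach buys: your telescoping identity shows that \emph{any} circle, balanced or not, creates a dependence among the normals, so your forest conclusion does not even invoke part (a) --- slightly stronger than the paper's minimality argument, which distinguishes the balanced and unbalanced cases; and your observation that no genericity is needed is right, though that is equally true of the paper's proof.  The paper's version, in exchange, reuses Lemma \ref{L2a} --- the engine of the whole section --- and its projective phrasing ($s_\bbP$, the modular law) sets up the language used immediately afterward in Lemmas \ref{L4a} and \ref{L4b}.  One point worth making explicit in your write-up: the edge vectors in the telescoping sum are nonzero (and the normals of parallel edges are nonzero multiples of one another) precisely because adjacent reference points are assumed distinct, which is the standing hypothesis stated just before Lemma \ref{L2a}.
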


\begin{proof}
For part (a), consider an unbalanced circle $C$ in $\Phi$.  By Lemma \ref{L2a}, the hyperplanes corresponding to the edges of $C$ have empty intersection.

For part (b), $\cS$ exists, of size $\codim s$ but no less, by the modular law of dimension in $\bbP^d$ applied to $s_\bbP$.  By Lemma \ref{L2a}, the minimality of $\cS$ prevents $S$ from containing a balanced circle.
\end{proof}

Let $p_{ij}$ denote the ideal point on $Q_iQ_j$, that is $(Q_iQ_j)\atinf$, where $s\atinf$ means $s_\bbP \cap h_\infty$.  (If $Q_i = Q_j$, then $p_{ij} = \eset$.  For the meaning of this, see Section \ref{gp}.)  Let $s\rightarrow s^*$ denote the natural polarity of ideal subspaces, viz.\ $p_{ij}^* = (h_{ij})\atinf$ for any hyperplane $h_{ij} \perp Q_iQ_j$ and, for a general subspace $s$ of $h_\infty$, $s^* = t\atinf$ where $t$ is any orthogonal complement of any affine space $s'$ whose ideal part $s'\atinf$ equals $s$.  We can regard the points $p_{ij}$ as the edges of a graph on the vertex set $\{ 1, \hdots, n\}$, identified with $\{ Q_1, \hdots, Q_n \}$.  Let $\Pinf$ denote the set of all $p_{ij}$.

\begin {lem} \mylabel{L4} 
Given an $n$-tuple $\bQ = (Q_1, \hdots, Q_n)$ of distinct points in $\bbE^d$ and a set $T \subseteq \Pinf$, we have 
\begin{equation}\mylabel{E3}
\dim T \leq \min(n-c(T)-1, d-1)
\end{equation}
The class of $n$-tuples $\bQ$ such that equality holds for all $T$ has the property of genericity; that is, it is open and dense in $(\bbE^d)^n$.  It excludes all nonsimple $n$-tuples $\bQ$.
\end{lem}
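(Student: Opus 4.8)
The plan is to translate the problem into linear algebra on the difference vectors $Q_j - Q_i$ of the reference points and then run a standard open-dense argument. First I identify the ideal point $p_{ij}$ with the one-dimensional subspace of $\bbR^d$ spanned by $Q_j - Q_i$, so that $h_\infty \cong \bbP^{d-1}$ and, writing $U_T = \Span\{Q_j - Q_i : p_{ij} \in T\}$, we have $\dim T = \dim U_T - 1$. The inequality then reduces to bounding $\dim U_T$. If $T$ has connected components with vertex sets $V_1, \ldots, V_c$ (where $c = c(T)$, isolated vertices counting as trivial components), then within each $V_k$ every edge vector satisfies $Q_j - Q_i = (Q_j - Q_{v_0}) - (Q_i - Q_{v_0})$ for a fixed base vertex $v_0 \in V_k$, so it lies in the span of $\{Q_v - Q_{v_0} : v \in V_k\}$, a space of dimension at most $|V_k| - 1$. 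Summing over components gives $\dim U_T \leq \sum_k (|V_k| - 1) = n - c$, while trivially $\dim U_T \leq d$; subtracting $1$ yields $\dim T \leq \min(n - c - 1,\, d - 1)$.

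Next I record that equality holds for a given $T$ precisely when the matrix $M_T$ whose columns are the edge vectors $Q_j - Q_i$ ($p_{ij} \in T$) attains its maximal possible rank $r(T) = \min(n - c(T),\, d) = \dim U_T$ at its largest. Since the entries of $M_T$ are affine-linear in the coordinates of the $Q_i$, this is the nonvanishing of at least one $r(T) \times r(T)$ minor, an open polynomial condition defining a set $G_T \subseteq (\bbE^d)^n$. The good class is $G = \bigcap_{T \subseteq \Pinf} G_T$, a finite intersection since $\Pinf$ is finite, hence open. To see that each $G_T$ is dense I exhibit one configuration in it: choosing a spanning forest of $T$ and building each of its trees leaf-by-leaf, I place each new vertex $v$ attached to an existing vertex $u$ at $Q_v = Q_u + w$, so the new edge vector equals the freely chosen $w$; I take these $n - c(T)$ vectors to be linearly independent when $n - c(T) \leq d$ and spanning otherwise (isolated vertices placed at arbitrary distinct points). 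Then some maximal minor of $M_T$ is a nonzero polynomial, so its zero locus is a proper algebraic subset of $\bbR^{dn}$, whence $G_T$ is open and dense. A finite intersection of open dense sets is open and dense, so $G$ is open and dense; intersecting with the open dense locus of distinct tuples, where each $p_{ij}$ is defined, gives the asserted genericity.

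Finally, that $G$ excludes nonsimple tuples follows from one well-chosen $T$. If $Q_{i_0}, \ldots, Q_{i_d}$ are $d+1$ affinely dependent points, let $T$ be the star joining $i_0$ to $i_1, \ldots, i_d$, the other $n - d - 1$ vertices isolated; then $c(T) = n - d$, so $\min(n - c(T) - 1,\, d - 1) = d - 1$, yet the $d$ edge vectors $Q_{i_1} - Q_{i_0}, \ldots, Q_{i_d} - Q_{i_0}$ are linearly dependent, forcing $\dim T < d - 1$. Thus equality fails for this $T$ and $\bQ \notin G_T \supseteq G$. The main obstacle is the density step, and in particular the verification that the maximal rank $r(T)$ is simultaneously attainable for every $T$ — equivalently, that generic difference vectors realize the rank-$d$ truncation of the graphic matroid of $K_n$; by contrast the inequality and the openness are routine, and the exclusion of nonsimple tuples is merely the boundary case $n - c(T) = d$ of that realization.
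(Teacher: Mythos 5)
Your proof is correct, and while the inequality part matches the paper's, your open-and-dense argument takes a genuinely different route. For the inequality, your component-by-component bound on $U_T$ via difference vectors from a base vertex is the same computation the paper does projectively, where each component $T_k$ on $r$ vertices spans $t_k = \aff(Q_{i_1},\hdots,Q_{i_r})\atinf$ of dimension at most $r-2$. The divergence is in density: the paper argues by iterated perturbation, taking a minimal offending set $T \in D(\bQ) = \{T : \text{strict inequality holds and } n-c(T) \leq d\}$, observing that the span $t_m$ of one component is determined by reference points having no effect on $t_1 \cup \cdots \cup t_{m-1}$, and nudging just those points to restore independence of the spans, so that $D(\bQ)$ strictly shrinks and finitely many repetitions land in $\cG$. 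You instead algebraize: equality for $T$ is the nonvanishing of some $r(T) \times r(T)$ minor of $M_T$, and your leaf-by-leaf forest construction exhibits a single witness configuration at which such a minor is nonzero, so the minor is a nonzero polynomial on $\bbR^{dn}$ and $G_T$ contains the complement of a proper real-algebraic subset. This buys you slightly more than the lemma asserts---the exceptional set has measure zero, not merely empty interior---and it sidesteps the paper's induction on $D(\bQ)$ entirely, since you need only one witness rather than a perturbation of an arbitrary nongeneric $\bQ$; the paper's argument in exchange stays elementary and purely geometric. One small caveat: your nonsimplicity step (the star on $d+1$ affinely dependent points) tacitly assumes $n \geq d+1$. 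Under the paper's matroid definition of simple position, a nonsimple tuple with $n \leq d$ instead has a minimal affinely dependent subset of some size $r \leq d+1$, and you should run your star (or the paper's path $T = \{p_{12}, p_{23}, \hdots, p_{r-1,r}\}$) on that $r$-set; the computation is identical, giving $\dim T = r-3$ against the bound $r-2$, and the case is vacuous in the lemma's applications anyway since Theorem \ref{T1} assumes $n > d$.
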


\begin{proof}
Let $\cG$ be the set of $\bQ$ having equality in \eqref{E3}.  

Consider a component $T_k$ of $T$ with vertex set $\{ i_1, \hdots, i_r\}$.
It is easy to see that $T_k$ spans the space 
$t_k = \aff(Q_{i_1}, \hdots, Q_{i_r})\atinf$, whose dimension is $\leq r-2$.  Summing over all components yields \eqref{E3}.

If $\bQ$ is not simple, say $Q_1, Q_2, \hdots, Q_r$ are a minimal affinely dependent
subset with $r \leq d + 1$.  Let $T = \{ p_{12}, p_{23}, \hdots, p_{r-1,r} \}$.
Then $\dim T= \dim \aff(Q_1, \hdots, Q_r) - 1 = r - 3$, but $n-c(T) = r-1$.  So 
$\bQ \notin \cG$.

We wish to prove that $\cG$ is open and dense.  For an $n$-tuple $\bQ = (Q_1, \hdots, Q_n)$, let $D(\bQ) = \{ T \subseteq \Pinf : T \text{ has strict inequality in \eqref{E3} and } n - c(T) \leq d \}$.  
If $D(\bQ)$ is empty, then \eqref{E3} holds with equality for all $T \subseteq \Pinf$.  Moving any points of $\bQ$ very slightly will not reduce the dimension of any set $T$; thus $\cG$ is open; furthermore, for an arbitrary $\bQ$, $D(\bQ') \subseteq D(\bQ)$ if $\bQ'$ is very near $\bQ$.

Suppose now that $\bQ \notin \cG$.  If $\bQ$ is not simple, an
arbitrarily slight deformation of its points $Q_1, \hdots, Q_n$ will make
it so.  Thus we can assume simplicity.  So, if $T_k$ is a component of $T$
with $n-c(T) \leq d$, and $T_k$ has vertices $i_1, \hdots, i_r$, then 
$t_k = \aff(Q_{i_1}, \hdots, Q_{i_r})\atinf$ has dimension $r-2$.  If $T$ is a minimal element of $D(\bQ)$ with nontrivial components $T_1, \hdots, T_m$, then 
$$
\dim(t_1 \cup \dots \cup t_{m-1}) = {\sum^{m-1}_{1}} \dim t_k
$$
while
$$
\dim(t_1 \cup \dots \cup t_m) < {\sum^{m}_{1}} \dim t_k 
= \dim(t_1 \cup \dots \cup t_{m-1}) + \dim(t_m).
$$
But $t_m$ is determined by points $Q_{i_1}, \hdots, Q_{i_r}$ which have no
effect on $t_1 \cup \cdots \cup t_{m-1}$.  Thus by varying these points
(arbitrarily slightly) to positions $Q'_{i_1}, \hdots, Q'_{i_r}$ we can
make the dimension of $(t_1 \cup \cdots \cup t_{m-1}) \cup t_m$ equal
to its maximum value of ${\sum^m_1} \dim t_k$.  The new $n$-tuple
$\bQ'$ will have $D(\bQ') \subset D(\bQ)$.  Continuing in this way we can find
$\bQ'' \in \cG$ arbitrarily near to $\bQ$. Thus $\cG$ is
dense.
\end{proof}

If $\bQ = (Q_1, \hdots, Q_n)$ satisfies \eqref{E3} with equality, we say $\bQ$, or
$Q_1, \hdots, Q_n$, have {\emph {ideal general position}}.  This is a
strong kind of affine general position, ruling out all unnecessary
parallelisms.  Lemma \ref{L4} allows us to restrict our consideration of the
general form of perpendicular arrangements described by a fixed $\Phi$ to
those $\bQ$ in ideal general position.

We shall need the dual formulation.  Recall that $\cS_\bbP$ contains $h_\infty$ by definition.

\begin{lem} \mylabel{L4a}
Let $\cS \subseteq \cH(\Phi;\bQ)$, corresponding to $S \subseteq E(\Phi)$.  Then
\begin{equation*}
\dim(\bigcap \cS_\bbP) \geq \max(d-1-[n-c(S)], -1),
\end{equation*}
with equality if $\bQ$ has ideal general position.
\end{lem}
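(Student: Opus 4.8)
The plan is to recognize this statement as precisely the projective dual of Lemma \ref{L4}, transported across the polarity $s \mapsto s^*$ on the subspaces of $h_\infty$. First I would observe that, since $\cS_\bbP$ contains $h_\infty$ by definition, the whole intersection $\bigcap \cS_\bbP$ lies inside $h_\infty$; and because intersection distributes over a common factor,
$$
\bigcap \cS_\bbP \;=\; h_\infty \cap \bigcap_{e \in S} h(e)_\bbP \;=\; \bigcap_{e \in S}\bigl( h(e)_\bbP \cap h_\infty \bigr) \;=\; \bigcap_{e \in S} p_{ij}^{*},
$$
the last step using that $h(e)_\bbP \cap h_\infty = h(e)\atinf = p_{ij}^{*}$ whenever $e$ has endpoints $\{i,j\}$, by the very definition of the polarity ($p_{ij}^{*} = (h_{ij})\atinf$ for any $h_{ij}\perp Q_iQ_j$). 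Setting $T = \{\,p_{ij} : e_{ij}\in S\,\}\subseteq\Pinf$, I note that passing from edges to their ideal points changes no vertex--edge incidence, so $c(T) = c(S)$.

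Next I would invoke the standard fact that the polarity is an inclusion-reversing, dimension-reversing involution on the subspaces of $h_\infty \cong \bbP^{d-1}$, carrying spans to intersections of polars: $\bigcap_{e\in S} p_{ij}^{*} = (\Span T)^{*}$, with $\dim s^{*} = (d-2) - \dim s$ for every subspace $s$ of $h_\infty$ (a point polarizing to a hyperplane of $h_\infty$, the whole of $h_\infty$ to the empty set, and so on). Hence $\dim\bigl(\bigcap \cS_\bbP\bigr) = (d-2) - \dim T$, where $\Span\eset$ is read as having dimension $-1$ so that the degenerate case $S = \eset$, for which $\bigcap\cS_\bbP = h_\infty$, comes out correctly.

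At this point Lemma \ref{L4} supplies $\dim T \leq \min\bigl(n - c(T) - 1,\ d-1\bigr) = \min\bigl(n - c(S) - 1,\ d-1\bigr)$, with equality throughout exactly when $\bQ$ has ideal general position. Substituting and using the identity $(d-2) - \min(a,b) = \max\bigl((d-2)-a,\ (d-2)-b\bigr)$ with $a = n - c(S) - 1$ and $b = d-1$ converts the bound into $\dim\bigl(\bigcap\cS_\bbP\bigr) \geq \max\bigl(d - 1 - [n-c(S)],\ -1\bigr)$, which is the assertion; the equality clause transfers verbatim from that of Lemma \ref{L4}.

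I expect the only real care to lie in the bookkeeping rather than in any genuine obstacle: pinning down the dimension-reversing constant $d-2$ of the polarity on $h_\infty = \bbP^{d-1}$ (so that points go to hyperplanes and all of $h_\infty$ goes to $\eset$), checking that the two clamping terms $d-1$ and $-1$ correspond to each other under the polarity so the degenerate cases $S=\eset$ and $\Span T = h_\infty$ stay consistent with the $\max(\cdot,-1)$, and recording $c(T)=c(S)$ despite possible parallel or multiple edges. Once the intersection has been rewritten as $(\Span T)^{*}$, the lemma is literally the polar dual of Lemma \ref{L4}, so no further geometric input is needed.
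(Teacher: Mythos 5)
Your proposal is correct and coincides with the paper's proof, which consists of the single instruction ``Dualize Lemma \ref{L4}'': you have simply carried out that dualization explicitly, rewriting $\bigcap \cS_\bbP$ as $(\Span T)^*$ with $T = \{p_{ij} : e_{ij} \in S\}$ and transporting the bound of Lemma \ref{L4} through the polarity's dimension formula $\dim s^* = (d-2) - \dim s$ on subspaces of $h_\infty$. Your bookkeeping (the identification $c(T)=c(S)$ despite parallel edges, the min-to-max conversion, and the degenerate cases $S=\eset$ and $\Span T = h_\infty$ matching the clamps $d-1$ and $-1$) is exactly the verification the paper leaves to the reader.
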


\begin{proof}
Dulaize Lemma \ref{L4}.
\end{proof}

\begin{lem} \mylabel{L4b}
Let $\cS \subseteq \cH(\Phi;\bQ)$ correspond to a forest $S \subseteq E(\Phi)$ with $m$ edges where $m \leq d$.  Assume $\bQ$ has ideal general position.  Then $\dim(\bigcap \cS) = d-m$.
\end{lem}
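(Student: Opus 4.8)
The plan is to reduce the claim to the elementary fact that $m$ hyperplanes in $\bbE^d$ with linearly independent normal vectors meet in an affine flat of dimension exactly $d-m$ whenever $m \le d$. The only thing I then need to verify is that the normals of the hyperplanes $h(e)$, $e \in S$, are linearly independent; once that is in hand, both the nonemptiness of $\bigcap\cS$ and its exact dimension follow at once, so I never have to argue separately that the intersection is nonvoid.

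First I would record the combinatorial and geometric input. Since $S$ is a forest on $\{1,\dots,n\}$ with $m$ edges, it has $c(S)=n-m$ components, so $n-c(S)=m$. The hyperplane $h(e_{ij})$ is by definition perpendicular to $Q_iQ_j$, so its normal direction is that of the line $Q_iQ_j$, whose ideal point is $p_{ij}$. Thus the normals of the $m$ hyperplanes in $\cS$ correspond to the $m$ ideal points forming the set $T_S=\{p_{ij}:e_{ij}\in S\}\subseteq\Pinf$, whose underlying graph is exactly $S$ and hence again has $n-m$ components.

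Next I would invoke ideal general position through Lemma \ref{L4}. Applying \eqref{E3} with equality to $T=T_S$ gives $\dim T_S=\min(n-c(T_S)-1,\,d-1)=\min(m-1,\,d-1)=m-1$, the last step because $m\le d$. A set of $m$ ideal points spanning an $(m-1)$-dimensional subspace of $h_\infty$ is projectively independent, which is precisely the statement that the $m$ corresponding direction vectors in $\bbR^d$ are linearly independent. Since these directions are the normals of the hyperplanes $h(e)$, $e\in S$, the $m$ hyperplanes have linearly independent normals, and therefore $\bigcap\cS$ is a nonempty affine flat of dimension $d-m$, as claimed.

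The argument is short, so there is no serious obstacle; the one point demanding care is the translation between the projective span condition delivered by Lemma \ref{L4} (``$m$ ideal points span an $(m-1)$-flat'') and the linear-algebra condition I actually use (``$m$ normal vectors are independent''). I would also note that one can instead derive the result dually from Lemma \ref{L4a}: that lemma gives $\dim(\bigcap\cS_\bbP)=d-1-m$, and since $\bigcap\cS_\bbP=(\bigcap\cS)\atinf$ is the ideal part of the affine intersection, its dimension is one less than $\dim(\bigcap\cS)$. This route, however, requires knowing in advance that $\bigcap\cS$ is nonvoid (otherwise $\bigcap\cS_\bbP$ need not be a mere hyperplane section of the affine flat), which is exactly what the independence of normals supplies for free.
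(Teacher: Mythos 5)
Your proof is correct, and it takes a genuinely different route from the paper's. The paper argues by induction on $m$: it tracks the projective objects $s' = \bigcap\{h_\bbP : h \in \cS\}$ and $s'' = \bigcap \cS_\bbP$, obtains $\dim s'' = d-m-1$ from the dualized dimension count (Lemma \ref{L4a}), and at each inductive step eliminates the alternatives $s = t$ and $s = \eset$ by comparing $\dim s''$ with $\dim t''$ and invoking the modular law, so that only $\dim s = \dim t - 1$ survives. You instead use ideal general position exactly once, in its primal form: equality in \eqref{E3} applied to $T_S$ gives $\dim T_S = \min(m-1,d-1) = m-1$ (which in particular forces the $m$ points $p_{ij}$ to be distinct), hence these ideal points are projectively independent, hence the normals of the hyperplanes $h(e)$, $e \in S$, are linearly independent in $\bbR^d$; a linear system of full row rank is solvable for every right-hand side with solution set of dimension exactly $d-m$, so nonvoidness comes for free and the gains never enter. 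In the matroid language of Section \ref{gp} this is just the observation that IGP makes $M(\Pinf) \cong T_d(G(K_n))$ and a forest with $m \leq d$ edges is independent there. Your route is shorter, avoids the induction and the projective case analysis, and isolates the actual content of the lemma (only the directions of the reference lines matter); what the paper's formulation buys is uniformity with the adjacent machinery---the same $s$, $s''$ bookkeeping and inductive style are reused immediately in the proof of Lemma \ref{L4c}---together with an explicit record of how each affine flat sits against its ideal part, which the subsequent genericity arguments track. Your closing caveat is exactly the pitfall the paper's proof must confront: Lemma \ref{L4a} alone gives $\dim(\bigcap \cS_\bbP) = d-1-m$, but this equals $\dim(\bigcap\cS) - 1$ only once one knows $\bigcap\cS \neq \eset$ (otherwise the projective intersection lies wholly inside $h_\infty$ and is not the ideal part of any affine flat); that degenerate case is precisely what the paper's third-case argument excludes and what your independence of normals dissolves at the outset.
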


\begin{proof}
Let $s = \bigcap \cS$, $s' = \bigcap \{ h_\bbP : h \in \cS \}$, and $s'' = \bigcap \cS_\bbP$.  We know $\dim s'' = d-m-1$ since $c(S) = n-m$.

The proof is by induction on $m$.  The base cases $m = 0,\ 1$ are trivial.  Thus let $1 < m \leq d$.  Choose $e \in \cS$ and set $T = S \setminus \{e\}$, $\cT = \cS \setminus \{h(e)\}$, $t = \bigcap \cT$, and $t'' = \bigcap \cT_{\,\bbP}$.  By induction, $\dim t = d-m+1$.  Thus $\dim t'' = d-m$.

Now, either $s=t$, or $\eset \neq s \subset t$ and $\dim s = \dim t - 1$, or $s = \eset$.  The last case is impossible, for $s = \eset \implies s' \subseteq h_\infty \implies s'' = s'$, whence $\dim s' = d-m-1$, contradictiing the fact that $\dim s' \geq \dim t - 1$ by the modular law.  The first case is impossible because $s = t \implies s' = t' \implies s'' = t''$, but $\dim s'' \neq \dim t''$.  Therefore $\eset \neq s \subset t$ and $\dim s = \dim t - 1$.
\end{proof}

\begin{lem} \mylabel{L4c}
Let $\cS \subseteq \cH(\Phi;\bQ)$ correspond to a forest $S \subseteq E(\Phi)$ with $m$ edges where $m > d$.  Assume $\bQ$ has ideal general position.  Then $\dim(\bigcap \cS)$ is a point or void, and for generic $\bQ$ it is void.
\end{lem}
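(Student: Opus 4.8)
The plan is to bootstrap from the $d$-edge case of Lemma~\ref{L4b}: first the dichotomy ``point or void,'' then genericity from one carefully chosen extra edge. For the dichotomy, since $m>d$ I would pick any $d$-edge subforest $S_0\subseteq S$ (a subgraph of a forest is a forest) and invoke Lemma~\ref{L4b}, which under the standing hypothesis of ideal general position makes $\bigcap\cS_0$ a single point $p$. Because $\cS_0\subseteq\cS$, we get $\bigcap\cS\subseteq\{p\}$, so $\bigcap\cS$ is either $\{p\}$ or $\eset$; that is, $\dim(\bigcap\cS)\in\{0,-1\}$, a point or void.

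For the genericity claim I would reduce to $m=d+1$: for any $(d+1)$-edge subforest $S''\subseteq S$ one has $\bigcap\cS\subseteq\bigcap\cS''$, so it suffices to produce one such $S''$ with void intersection for generic $\bQ$, and I may as well assume $S$ itself has $d+1$ edges (hence at least two). The key device is a leaf. A forest with an edge has a degree-one vertex $j$ lying on a unique edge $e=e_{ij}$; set $S_0=S\setminus\{e\}$, a $d$-edge forest with $j\notin V(S_0)$. By Lemma~\ref{L4b}, $p:=\bigcap\cS_0$ is a single point, and---crucially---since no edge of $S_0$ meets $j$, the equations defining $p$ do not involve $Q_j$, so $p$ is independent of $Q_j$.

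Now $\bigcap\cS=\{p\}\cap h(e)$, which is void precisely when $p\notin h(e)$, i.e.\ when $\psi_{ij}(p)\neq\phi(e;i,j)$. Holding all reference points but $Q_j$ fixed (so $p$ stays put) and varying $Q_j$, the value $\psi_{ij}(p)=\|p-Q_i\|^2-\|p-Q_j\|^2$ is a nonconstant function of $Q_j$ whose level set $\{\psi_{ij}(p)=\phi(e;i,j)\}$ is a sphere (possibly a point or empty), in any case nowhere dense. Hence from any configuration an arbitrarily small perturbation of $Q_j$ alone moves it off this locus, so the set of $\bQ$ with $p\notin h(e)$ is open and dense; intersecting it with the open dense ideal-general-position locus of Lemma~\ref{L4} yields a generic set of $\bQ$ on which $\bigcap\cS=\eset$.

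I expect the one delicate point to be exactly this decoupling of $p$ from $e$. Without it, perturbing an endpoint of $e$ moves the point $p$ and the hyperplane $h(e)$ at once, and keeping track of the sign of $\psi_{ij}(p)-\phi(e;i,j)$ becomes awkward; choosing $e$ to be a leaf edge, so that $Q_j$ affects $h(e)$ but not $p$, is what makes the deformation argument transparent. The remaining bookkeeping---that each imposed condition is open and dense, so their conjunction is generic---is routine.
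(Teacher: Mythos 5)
Your proposal is correct and takes essentially the same route as the paper: the point-or-void dichotomy comes from Lemma \ref{L4b} applied to a $d$-edge subforest, and density comes from deleting a leaf edge so that the residual intersection point is decoupled from the leaf's reference point, which is then perturbed off a sphere, all wrapped in the routine open-and-dense bookkeeping. The only cosmetic differences are that the paper handles $m>d+1$ by induction rather than by passing to a $(d+1)$-edge subforest, and your explicit insistence on perturbing the \emph{leaf's} point (so that $p$ stays fixed) cleanly articulates the decoupling that the paper states more tersely (its printed proof speaks of shifting the neighbor's point off a ``cylinder'').
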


\begin{proof}
That $\bigcap \cS$ is contained in a point follows directly from Lemma \ref{L4b}.

Let $\cG$ be the set of all $\bQ \in (\bbE^d)^n$ that have ideal general position and for which $\bigcap \cS = \eset$.  We need to show that $\cG$ is open and dense in $(\bbE^d)^n$.

If $\bigcap \cS = \eset$, shifting $\bQ$ slightly will leave it so.    Thus $\cG$ is open.

Suppose $\bigcap \cS$ is a point $P$.  We show how to deform $\bQ$ to make $\bigcap \cS$ empty.  Let $i$ be an end vertex in $S$, $e$ its incident edge, $j$ its neighbor, and $T = S \setminus \{e\}$.  Then $\bigcap \cT = \{P\}$ by induction if $m > d+1$ or Lemma \ref{L4b} if $m = d+1$.  The hyperplane $h(e)$ contains $P$; thus
\begin{equation*}
d(P,Q_i)^2 = d(P,Q_j)^2 + \phi(e;i,j).
\end{equation*}
If we shift $Q_j$ slightly off the cylinder described by this
equation, $\bigcap \cS$ becomes empty.  This proves that $\cG$ is dense.
\end{proof}

\begin{exam} \mylabel{Xnongeneric}
In Lemma \ref{L4c} it is indeed possible that $\bigcap \cS$ be a point.  Here is an example in the plane.  Take a point $P$ and three lines through $P$ making small angles, say $l_1, l_2, l_3$ with $l_2$ between the other two lines.  Choose a foot $P_2$ on $l_2$ at some distance from $P$ and erect a short perpendicular segment on $P_2$, bisected by $l_2$ and not intersecting either other line.  Call its endpoints $Q_2$ (nearer to $l_1$) and $Q_3$ (nearer to $l_3$).  From $Q_2$ drop a perpendicular to $l_1$ and continue it slightly past $l_1$ to a point $Q_1$.  Similarly, drop a perpendicular to $l_3$ from $Q_3$ and continue it to $Q_4$.  The reference points $Q_1, Q_2, Q_3, Q_4$ are plainly in ideal general position; the perpendiculars $l_1$, $l_2$, and $l_3$ to $Q_1Q_2$, $Q_2Q_3$, and $Q_3Q_4$ are described by their Pythagorean gain graph $\Psi$ which is a tree of $3$ edges; yet $l_1, l_2, l_3$ are concurrent.
\placefigure{nongeneric}
\begin{figure} \mylabel{Fnongeneric}
\vbox to 3.1truein{}
\begin{center}
\includegraphics{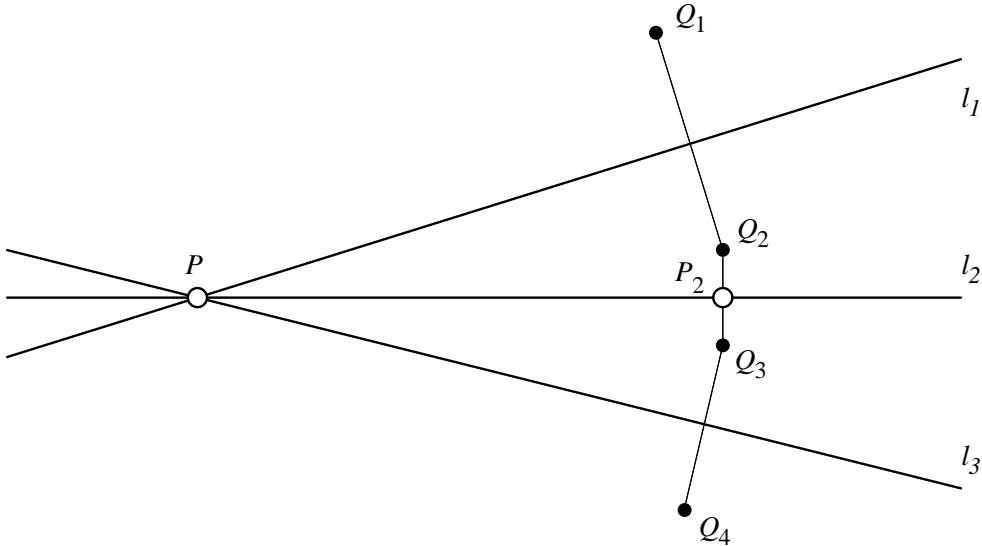}
\end{center}
\caption{A nongeneric arrangement of lines whose reference points nevertheless have ideal general position.}
\end{figure}

\end{exam}

Examples like this show that Lemma \ref{L4c} is where ideal general position ceases to be sufficient for genericity of $\bQ$.

\begin{lem} \mylabel{L5}
Let $\Phi$ be a fixed Pythagorean gain graph.  Let $\bQ = (Q_1, \hdots, Q_n)$ have ideal general position in $\bbE^d$, where $n > d \geq 1$.  Take a subset $\cS \subseteq \cH(\Phi;\bQ)$, corresponding to an edge set $S$ of $\Phi$.  Then $\bigcap \cS$ is a nonempty subspace of dimension $d-(n-c(S))$ if $S$ is balanced and has $c(S) \geq n-d$, it is a point or void (and it is void for generic $\bQ$) if $S$ is balanced and $c(S) < n-d$, and it is void if $S$ is unbalanced.
\end{lem}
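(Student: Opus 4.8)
The plan is to reduce every case to the forest results of Lemmas \ref{L4b} and \ref{L4c}, using the circle lemma \ref{L2a} to discard redundant hyperplanes. The unbalanced case is immediate: if $S$ is unbalanced it contains an unbalanced circle $C$, and writing $\cC$ for the corresponding hyperplanes, Lemma \ref{L2a} gives $\bigcap \cC = \eset$; since $\cC \subseteq \cS$, we get $\bigcap \cS \subseteq \bigcap \cC = \eset$, so the intersection is void.

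Now assume $S$ is balanced. I would choose a maximal forest $S_0 \subseteq S$ --- a spanning tree of each connected component --- so that $c(S_0) = c(S)$ and $S_0$ has exactly $m := n - c(S)$ edges; let $\cS_0 \subseteq \cS$ be the corresponding hyperplanes. The crux is to show $\bigcap \cS = \bigcap \cS_0$. For each chord $e \in S \setminus S_0$, let $C_e$ be the unique circle formed by $e$ together with the path in $S_0$ joining its endpoints. Since $S$ is balanced, every circle it contains is balanced, so $C_e$ is balanced; hence the balanced case of Lemma \ref{L2a} shows that $h(e)$ contains the intersection of the hyperplanes of the other edges of $C_e$, each of which lies in $S_0$. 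Thus $h(e) \supseteq \bigcap \cS_0$. Intersecting over all chords shows that adjoining these hyperplanes to $\cS_0$ changes nothing, giving $\bigcap \cS = \bigcap \cS_0$.

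It then remains to read off the answer from the size of $S_0$. If $c(S) \geq n-d$, then $m \leq d$, and Lemma \ref{L4b} gives that $\bigcap \cS_0$ is nonempty of dimension $d - m = d - (n - c(S))$. If $c(S) < n-d$, then $m > d$, and Lemma \ref{L4c} gives that $\bigcap \cS_0$ is a point or void, and void for generic $\bQ$. Combined with the unbalanced case, this yields all three alternatives.

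I expect the only delicate point to be the reduction $\bigcap \cS = \bigcap \cS_0$: one must verify that each chord closes up a circle lying wholly inside the balanced set $S$, so that Lemma \ref{L2a} legitimately applies, and that the path edges invoked genuinely belong to the chosen forest $S_0$. Once that is secured, the dimension counts follow at once from the forest lemmas, which already absorb the ideal-general-position hypothesis.
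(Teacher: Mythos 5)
Your proof is correct and takes essentially the same approach as the paper's: the paper likewise reduces $S$ to a forest by deleting edges of balanced circles via Lemma \ref{L2a} (disposing of unbalanced $S$ through Lemma \ref{L3}, whose content is exactly your direct application of Lemma \ref{L2a}), and then invokes the forest Lemmas \ref{L4b} and \ref{L4c}. Your only variation---fixing a maximal forest and discarding all chords at once through their fundamental circles, rather than deleting one edge of a balanced circle at a time---is a cosmetic reorganization of the same argument.
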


\begin{proof} 
Suppose $S$ contains a balanced circle.  By Lemma \ref{L2a} we can remove an edge of $C$ from $S$ without changing either the intersection of the hyperplanes or the number of components of $S$.  So we may as well assume $S$ contains no balanced circle.

If $S$ contains an unbalanced circle, then $\bigcap \cS = \eset$ by Lemma \ref{L3}.

The remaining case is that of a forest, which is treated in Lemma \ref{L4c}.
\end{proof}

\begin{proof}[Proof of Theorem \ref{T1}]
For each $S \subseteq E(\Phi)$ let $\cG(S)$ be the set of $n$-tuples $\bQ$ for which $\cS$ has intersection as described in Lemma \ref{L5}.  There are finitely many of these sets $\cG(S)$, each open and dense in $(\bbE^d)^n$; thus their intersection  $\cG$ is open and dense as well.  Since every $\bQ \in \cG$ has the intersection properties in the first part of the theorem, these properties are generic, as claimed.

As for the second half of the theorem, where $\bigcap {\cS}_1 = \bigcap {\cS}_2 \neq \eset$, this makes both intersections equal to $\bigcap ({\cS}_1 \cup {\cS}_2)$.  Thus $S_1 \cup S_2$ is balanced, $n - c(S_1 \cup S_2) \leq d$, and 
$c(S_1 \cup S_2) = c(S_1) = c(S_2)$.  The latter entails that $S_1$ and $S_2$ induce the same partition of the vertex set.
\end{proof}

We are---at last---able to state a precise definition of genericity, or ``completely general position,'' of $Q_1, \hdots, Q_n$.

\begin{defn}
Let $\Phi$ be a real, additive gain graph.  Points $Q_1, Q_2, \hdots, Q_n$ have {\emph {general position with respect to $\Phi$}} if $\cH(\Phi;\bQ)$ satisfies the criteria for genericity of Theorem \ref{T1}.  They have {\emph {completely general position with respect to $\Phi$}} if they have both ideal general position and general position with respect to $\Phi$.
\end{defn}


\section{Matroids and general position} \mylabel{matroids} 

\subsection{Variations on general position} \mylabel{gp}

Re-expressing our results in terms of matroids is technical but illuminating (for matroid theorists) and makes it possible to prove a number of interesting facts with little difficulty.  The essential matroids are $G(\Gamma)$, the graphic (polygon) matroid of a graph $\Gamma$, and $L_0(\Phi)$, the complete lift matroid of the gain graph $\Phi$.  The point set of $L_0(\Phi)$ is $E \cup \{e_0\}$, where $e_0 \notin E$, and its rank function is $\rk(S) = n - c(S \setminus \{e_0\}) + \epsilon$ for $S \subseteq E \cup \{e_0\}$, where $\epsilon = 0$ if $S$ is a balanced subset of $E$ and $1$ otherwise.  For more about balanced flats and $L_0$ we refer to \cite[\S\S II.2, II.3]{BG}.  The geometric lattice of flats of a matroid $M$ is denoted by $\Lat M$ and the geometric semilattice of balanced flats of $L_0(\Phi)$ is $\Latb\Phi$.  Truncation of a matroid or of its lattice of flats to rank $r$ is denoted by $T_r$; this means the elements of ranks $r$ and higher, if any, are replaced by the single element $\hat1$.

We begin with six definitions of general position of $\bQ$.  The first four restate those already given in Sections \ref{intro} and \ref{pyth}.  Recall that $\Pinf$ is the set of all ideal points $p_{ij}$ on the reference lines $Q_iQ_j$.  Its projective dependence matroid is $M(\Pinf)$.
\begin{itemize}
\item \mylabel{SP}
\emph{Simple position (SP)}:
\item[] The matroid of the reference points is the uniform matroid of rank $\min(n,d+1)$.
\item \mylabel{IGP}
\emph{Ideal general position (IGP)}:  
\item[] $M(\Pinf) \cong T_d(G(K_n))$, the matroid of the complete graph truncated to rank $d$, under the canonical correspondence $p_{ij} \leftrightarrow ij$.  (IGP entails that all reference points are distinct, since by our definition that $Q_iQ_j = \aff(Q_i,Q_j)$, $p_{ij} = (Q_iQ_j)\atinf = \eset$ if $Q_i = Q_j$.  Then $p_{ij}$ is, in effect, a matroid loop in $M(\Pinf)$, which consequently is not isomorphic to $T_d(G(K_n))$, not even when $d=1$.)
\item \mylabel{GP}
\emph{General position with respect to $\Phi$ (GP)}:  
\item[] $\cL(\cH(\Phi;\bQ)) \cong [\Latb\Phi]_0^d$, the geometric semilattice consisting of ranks $0$ through $d$ of $\Latb\Phi$, under the canonical correspondence $h(e) \leftrightarrow e$.
\item \mylabel{CGP}
\emph{Completely general position with respect to $\Phi$ (CGP)}:  
\item[] The combination of IGP and GP.
\item \mylabel{PGP}
\emph{Projective general position with respect to $\Phi$ (PGP)}:  
\item[] $\cL(\cH_\bbP(\Phi;\bQ)) \cong T_{d+1}(\Lat L_0(\Phi))$ under the canonical correspondence $h(e) \leftrightarrow e$ and $h_\infty \leftrightarrow e_0$.
\item \mylabel{PCGP}
\emph{Projective completely general position with respect to $\Phi$ (PCGP)}:  
\item[] The combination of IGP and PGP.
\end{itemize}

The meaning of Theorem \ref{T1} is that, generically, $\bQ$ has GP.  We can say more.

\begin{thm} \mylabel{C1} 
Let $\Phi$ be a fixed real, additive gain graph on $n$ vertices.  Assume $n > d \geq 1$.  Generically, $n$ reference points $Q_1,\hdots,Q_n$ in $\bbE^d$ have completely general position and projective completely general position with respect to $\Phi$.
\end{thm}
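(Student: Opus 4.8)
The plan is to reduce everything to the two genericity results already in hand. Completely general position is immediate: IGP is open and dense by Lemma \ref{L4}, GP is open and dense by Theorem \ref{T1}, and CGP is by definition their conjunction, so CGP holds on the intersection of two open dense sets, which is again open and dense. The real content is therefore the projective statement, and here I would not run a fresh density argument but instead prove that IGP together with GP already forces PGP. Since CGP (that is, IGP and GP) is generic, it will then follow that PCGP, which is IGP together with PGP, is generic as well.

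To prove that IGP and GP imply PGP I would exhibit the candidate isomorphism $\Lat L_0(\Phi) \to \cL(\cH_\bbP(\Phi;\bQ))$ directly: a flat $F$ of $L_0(\Phi)$ with $e_0 \notin F$ is sent to the projective closure $s_\bbP$ of the affine flat $s = \bigcap\{h(e) : e \in F\}$, while a flat containing $e_0$ is sent to the ideal flat $\big(\bigcap\{h(e)_\bbP : e \in F \setminus \{e_0\}\}\big) \cap h_\infty$. The verification splits according to whether $e_0 \in F$. For balanced $F$ with $e_0 \notin F$ the image is an affine flat, and both its nonvoidness and its dimension $d-(n-c(F))$ are exactly the content of GP via Theorem \ref{T1}; these are the flats not lying in $h_\infty$. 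For $F \ni e_0$ the image lies in $h_\infty$, and its dimension under IGP is $d-1-(n-c(F\setminus\{e_0\}))$ by Lemma \ref{L4a}; in rank terms this is $n-c(F\setminus\{e_0\})+1$, precisely the $L_0$-rank of a flat containing $e_0$. Thus the two lemmas supply a rank-preserving bijection on each of the two layers, and the truncation $T_{d+1}$ accounts exactly for the collapse of all flats of $L_0(\Phi)$ of rank exceeding $d+1$ to the empty projective flat.

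The step I expect to be the main obstacle is matching the cross-incidences: verifying that the way affine flats meet $h_\infty$ agrees with the way balanced flats sit beneath their $e_0$-joins in the lift matroid. The key observation is that the direction at infinity of an affine flat is independent of the gains of $\Phi$, because $h(e) \perp Q_iQ_j$ gives $s\atinf = \big(\Span\{p_{ij} : e_{ij} \in S\}\big)^*$; hence the ideal part of $s_\bbP$ is governed entirely by the ideal-point matroid $M(\Pinf)$ and not by the Pythagorean coordinates. Consequently two affine flats are parallel exactly when their edge sets have the same closure in $M(\Pinf)$, and under IGP this matroid is $T_d(G(K_n))$, whose closure operator, restricted to the edges actually present in $\Phi$, is the graphic closure of the underlying graph --- which is precisely the closure operator of $L_0(\Phi)/e_0$. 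So two balanced flats have the same $e_0$-join in $L_0(\Phi)$ if and only if the corresponding affine flats are parallel, and the covering relation $s_\bbP \succ s\atinf$ corresponds to adjoining $e_0$ to the flat that closes $S$. This is the one place where the two hypotheses genuinely cooperate: GP places and sizes the affine flats, IGP controls their behavior at infinity, and the rank formula $\rk(S) = n - c(S\setminus\{e_0\}) + \epsilon$ of the complete lift matroid is exactly the bookkeeping that fuses the two layers. Once the incidences are confirmed the map is an order isomorphism onto $T_{d+1}(\Lat L_0(\Phi))$, which is PGP; combining with the generic validity of IGP gives PCGP, and the theorem follows.
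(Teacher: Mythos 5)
Your proposal is correct, but its projective half takes a genuinely different route from the paper. The paper's own proof of Theorem \ref{C1} is three lines: GP is generic by Theorem \ref{T1}; IGP is generic by Mason's theorem, via Proposition \ref{Ppar} (your citation of Lemma \ref{L4} is an equally valid in-house substitute); and PGP then comes for free because Theorem \ref{Tgp}(ii) asserts that GP and PGP are \emph{equivalent} --- with no IGP hypothesis at all --- by appeal to the Wachs--Walker uniqueness theorem (Lemma \ref{Lww}): the geometric semilattice $[\Latb\Phi]_0^d$ determines $T_{d+1}(\Lat L_0(\Phi))$ as its unique geometric-lattice extension, and $\cL(\cH)$ likewise determines $\cL(\cH_\bbP)$. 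You instead build the isomorphism $T_{d+1}(\Lat L_0(\Phi)) \cong \cL(\cH_\bbP)$ by hand, using GP for the affine layer and IGP for the layer at infinity; your polarity computation $s\atinf = \big(\Span\{p_{ij} : e_{ij}\in S\}\big)^*$, and the identification of the $M(\Pinf)$-closure restricted to $E(\Gamma)$ with the closure operator of $L_0(\Phi)/e_0 = G(\Gamma)$, are in substance Lemmas \ref{Lpij}, \ref{L4a}, \ref{Lsinf} and Proposition \ref{Pptn} of Section \ref{pij}, redeployed. Your cross-incidence step does go through: distinct partitions induced by edge sets of $\Gamma$ yield distinct $\Gamma$-closed edge sets, so parallelism classes of affine flats match $e_0$-joins in $L_0(\Phi)$ exactly as you claim, with the rank-$\geq d$ collapse on both sides absorbed by the truncation. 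The trade-off between the two arguments: the paper's abstract route is shorter and proves the strictly stronger, IGP-free equivalence GP $\iff$ PGP, but Lemma \ref{Lww} only supplies an isomorphism up to abstraction, so the canonicity of the correspondence $h(e)\leftrightarrow e$, $h_\infty\leftrightarrow e_0$ demanded by the definition of PGP is left implicit there; your construction pays the extra (but generically free) hypothesis IGP and in return exhibits the canonical correspondence explicitly, making visible that GP governs the finite flats while IGP alone governs behavior at infinity. For Theorem \ref{C1} itself both routes suffice, since IGP and GP each hold on an open dense subset of $(\bbE^d)^n$ and hence so does their conjunction.
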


\begin{proof} 
GP is a translation of Theorem \ref{T1} into the language of gain-graphic matroids.  PGP is equivalent to GP by Theorem \ref{Tgp}(ii).  IGP is a generic property of $n$ points by \cite[\S2.1]{Mason}, as we explain in the proof of Proposition \ref{Ppar}.
\end{proof}

\begin{prop} \mylabel{Ppoint}
If $\bQ$ has ideal general position and, for every $\cS \subseteq \cH$ that corresponds to a forest of $d+1$ edges in $\Phi$, $\bigcap \cS$ is empty, then $\bQ$ has general position with respect to $\Phi$.
\end{prop}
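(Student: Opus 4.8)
The plan is to verify directly that $\cH(\Phi;\bQ)$ meets the genericity criteria of Theorem~\ref{T1}; by the definition of GP that is exactly what must be shown. Since Theorem~\ref{T1} is itself proved by reducing the description of $\bigcap\cS$ to the forest case (Lemma~\ref{L5}), I would follow the same reduction, but replace the appeal to genericity in Lemma~\ref{L4c} by the explicit hypothesis on $(d+1)$-edge forests. The recognition that Lemma~\ref{L4c} is the \emph{only} step in the proof of Theorem~\ref{T1} that needs more than IGP is the real content of the proposition.

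First I would reduce an arbitrary edge set $S$ to a forest. If $S$ contains an unbalanced circle, then $\bigcap\cS=\eset$ by Lemma~\ref{L3}(a), which is the required outcome for unbalanced $S$. If $S$ is balanced but contains a circle, then repeatedly deleting one edge from a balanced circle (Lemma~\ref{L2a}) yields a maximal forest $S_0\subseteq S$ with $\bigcap\cS_0=\bigcap\cS$ and $c(S_0)=c(S)$, since deleting an edge of a cycle changes neither the intersection nor the component partition. Thus it suffices to treat forests.

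For a forest $S$ with $m=n-c(S)$ edges I would split into two cases. When $m\le d$, Lemma~\ref{L4b} (which uses only IGP) gives $\dim\bigcap\cS=d-m$, the asserted value. When $m>d$, I must show $\bigcap\cS=\eset$. The crux is a monotonicity observation: any subset of the edges of a forest is again a forest, so a forest with $m>d+1$ edges contains a sub-forest $S'$ with exactly $d+1$ edges; since $\cS\supseteq\cS'$ we have $\bigcap\cS\subseteq\bigcap\cS'=\eset$ by hypothesis, whence $\bigcap\cS=\eset$. The case $m=d+1$ is the hypothesis itself. This is the single place where the explicit assumption does the work that genericity does in Lemma~\ref{L4c}; everything else is forced by IGP.

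Having established the dimension-and-emptiness statement for every $S$, the first half of the Theorem~\ref{T1} criteria holds, and the coincidence criterion then follows verbatim from the end of the proof of Theorem~\ref{T1}: if $\bigcap\cS_1=\bigcap\cS_2\ne\eset$, then both equal $\bigcap(\cS_1\cup\cS_2)$, forcing $S_1\cup S_2$ balanced with $n-c(S_1\cup S_2)\le d$ and $c(S_1\cup S_2)=c(S_1)=c(S_2)$, so $S_1$ and $S_2$ induce the same vertex partition. I do not expect a genuine obstacle: the only nonroutine point is the monotonicity reduction of large forests to the $(d+1)$-edge case, and that is immediate from the containment of intersections.
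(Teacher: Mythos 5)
Your proposal is correct and follows essentially the paper's own route: the paper proves this proposition simply by citing Lemma \ref{L5} and its proof, which is exactly your reduction---balanced-circle deletion via Lemma \ref{L2a}, voidness for unbalanced sets via Lemma \ref{L3}, Lemma \ref{L4b} for forests of at most $d$ edges, and substitution of the explicit $(d+1)$-edge hypothesis (with your monotonicity observation, implicit in the paper's Lemma \ref{L4c}) for the genericity argument. You have merely written out in full the details the paper leaves to the reader.
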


\begin{proof} 
This follows from Lemma \ref{L5} and its proof.
\end{proof}

We should like to say something about the relationships among these several notions of general position.  We saw in Example \ref{Xnongeneric} that IGP does not imply GP.  This suggests that both are needed in the definition of completely general position.  Still, IGP is redundant if $\Phi$ is \emph{complete}, that is, every pair of vertices is adjacent.  We call $\Phi$ \emph{separable} if it has a vertex $v$ such that $\Phi \setminus v$ is disconnected.

\begin{thm} \mylabel{Tgp}
Given: a real, additive gain graph $\Phi$ on $n$ vertices and points $Q_1$, \ldots, $Q_n \in \bbE^d$.
\equivslistbegin
\item\mylabel{Tgp:sp} Ideal general position implies simple position.
\item\mylabel{Tgp:gp} General position and projective general position with respect to $\Phi$ are equivalent.
\item\mylabel{Tgp:cgp} Completely general position and projective completely general position with respect to $\Phi$ are equivalent.
\item\mylabel{Tgp:igp} General position with respect to $\Phi$ implies ideal general position if $\Phi$ is complete, but not if $\Phi$ is disconnected or separable.
\equivslistend
\end{thm}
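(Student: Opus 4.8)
The plan is to clear away the formal implications and concentrate on the one substantive step. Part~(\ref{Tgp:sp}) is already contained in Lemma~\ref{L4}, where the class of $\bQ$ having ideal general position was shown to exclude every nonsimple $n$-tuple; that is exactly the assertion that IGP implies SP. Part~(\ref{Tgp:cgp}) reduces to part~(\ref{Tgp:gp}): since CGP is IGP together with GP and PCGP is IGP together with PGP, the equivalence GP $\Leftrightarrow$ PGP upgrades at once to CGP $\Leftrightarrow$ PCGP. In part~(\ref{Tgp:gp}) one direction is structural. The projectivization obeys $\cL(\cH)=\cL(\cH_\bbP)\setminus[h_\infty,\hat1]$ (deleting the flats that lie in $h_\infty$, i.e.\ the interval over the atom $h_\infty$), while on the combinatorial side $[\Latb\Phi]_0^d=T_{d+1}(\Lat L_0(\Phi))\setminus[e_0,\hat1]$, because the balanced flats are precisely the flats of rank $\le d$ avoiding the atom $e_0$. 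An isomorphism witnessing PGP sends $h_\infty$ to $e_0$ by definition, so it restricts to an isomorphism witnessing GP; hence PGP $\Rightarrow$ GP.

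Everything now turns on GP $\Rightarrow$ PGP, which also yields the forward half of part~(\ref{Tgp:igp}). The engine is a converse to Lemma~\ref{L4b}, the \emph{direction lemma}: if $\bQ$ has GP, then the ideal points $\{p_{ij}:ij\in E(\Gamma)\}$ realize the truncated graphic matroid $T_d(G(\Gamma))$ under $p_{ij}\leftrightarrow ij$. To see this, let $S\subseteq E(\Gamma)$ be a forest with $m\le d$ edges. A forest is balanced, so $S$ is a rank-$m$ element of $[\Latb\Phi]_0^d$; under the GP isomorphism its image $\bigcap\cS$ is a nonempty flat of codimension $m$. But $m$ hyperplanes meet in a flat whose codimension equals the rank of their normals, and the normal of $h(e_{ij})$ runs along $Q_iQ_j$, i.e.\ is the direction $p_{ij}$; having codimension $m$ forces those $m$ directions to be independent. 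So every forest of size $\le d$ is independent among the ideal points, whereas any circle (whose edge-vectors sum to $0$) and any set of more than $d$ points in $h_\infty\cong\bbP^{d-1}$ are automatically dependent. These are exactly the dependencies of $T_d(G(\Gamma))$, so the two matroids coincide. In particular, when $\Phi$ is complete we have $\Gamma=K_n$, the lemma reads $M(\Pinf)\cong T_d(G(K_n))$, and this is IGP---settling part~(\ref{Tgp:igp}) forward.

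For GP $\Rightarrow$ PGP I reassemble $\cL(\cH_\bbP)$ from its two halves. The non-ideal flats give $\cL(\cH)\cong[\Latb\Phi]_0^d$ by GP. The ideal flats form the intersection lattice in $h_\infty$ of the hyperplanes $p_{ij}^*=(h(e_{ij}))\atinf$; by polarity their matroid is $M(\Pinf)|_{E(\Gamma)}$, which the direction lemma identifies with $T_d(G(\Gamma))=T_d(L_0(\Phi)/e_0)$, matching the interval $[e_0,\hat1]\cong\Lat(L_0(\Phi)/e_0)$. The gluing map $s_\bbP\mapsto s\atinf$, carrying a Euclidean flat to its trace at infinity, corresponds to $F\mapsto\operatorname{cl}(F\cup\{e_0\})$, since both are dictated by the same line directions; splicing the two halves along this compatible gluing gives $\cL(\cH_\bbP)\cong T_{d+1}(\Lat L_0(\Phi))$, which is PGP. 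The remaining claims of part~(\ref{Tgp:igp})---that GP need not imply IGP for disconnected or separable $\Phi$---are handled by placements that degrade IGP on a \emph{non}-edge while leaving every edge-intersection transversal. For $\Phi=\{e_{12}\}\sqcup\{e_{34}\}$ in $\bbE^2$ take $Q_1=(0,0),\,Q_2=(1,0),\,Q_3=(0,1),\,Q_4=(1,2)$: then $Q_1Q_3\parallel Q_2Q_4$ makes $p_{13}=p_{24}$ (killing IGP), yet $Q_1Q_2\not\parallel Q_3Q_4$ keeps the single relevant meet transversal, so GP holds. For separable $\Phi$ with cut-vertex $v$, no edge crosses the cut, so two cross-block non-edges may be forced parallel (or one cross-block triple made collinear, avoiding $v$) to break IGP while all edge-meets stay generic; a four-vertex triangle-with-pendant at $d=3$ makes this concrete.

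I expect the direction lemma, together with its use in the gluing step, to be the crux, since that is precisely where affine data---the codimensions of the Euclidean intersections guaranteed by GP---must be shown to control the behaviour at infinity. The observation that unlocks it is that GP is already, covertly, a statement about the \emph{directions} of the reference lines, and those directions are nothing but the ideal points $p_{ij}$ whose matroid governs $\cH_\bbP$ in $h_\infty$.
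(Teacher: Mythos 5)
Your parts (\ref{Tgp:sp}) and (\ref{Tgp:cgp}) are fine, and your ``direction lemma'' is correct and is a genuinely different engine from the paper's: from GP alone you extract that the ideal points on the edge-lines realize $T_d(G(\Gamma))$, which immediately gives the forward half of (\ref{Tgp:igp}). The paper instead dispatches (\ref{Tgp:gp}) in one stroke via the Wachs--Walker uniqueness theorem (Lemma \ref{Lww}): $\cL(\cH)$ and $[\Latb\Phi]_0^d$ each sit inside a geometric lattice as the complement of the interval over a distinguished atom ($h_\infty$, resp.\ $e_0$), and uniqueness of that geometric-lattice completion forces $\cL(\cH_\bbP) \cong T_{d+1}(\Lat L_0(\Phi))$ with no gluing analysis at all; (\ref{Tgp:igp}) then falls out by inspecting the interval $[h_\infty,\hat1]$. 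Your route has a genuine gap exactly where you wave: ``splicing the two halves along this compatible gluing'' asserts that $s\atinf$ corresponds to $\operatorname{cl}(E(s)\cup\{e_0\})$, i.e.\ that $\{e : h(e)\atinf \supseteq s\atinf\}$ is the graphic closure of $E(s)$. That is the nontrivial content---it is the analogue of Proposition \ref{Pptn}, which the paper proves under IGP via Lemmas \ref{Lpij}--\ref{Lsinf}---and you have neither IGP nor a proof of it from GP. The gap is fillable (a dimension count as in Lemma \ref{Lsinf}, with your direction lemma substituting for IGP, gives $\Span\Pinf(E(s)) = s\atinf^*$, with care needed at $\rk E(s) = d$), but as written the central step of (\ref{Tgp:gp}) is asserted rather than proved, and even granted it you still owe the check that the assembled bijection is an order isomorphism onto all of $T_{d+1}(\Lat L_0(\Phi))$.

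The second gap is concrete and not fillable as stated: your separable counterexample fails. With $n=4$ in $\bbE^3$, forcing a parallelism $Q_aQ_b \parallel Q_cQ_d$ (or a collinear triple) makes all four points coplanar, so every edge normal lies in a fixed $2$-dimensional direction space; but the triangle-with-pendant is connected, so $[\Latb\Phi]_0^3$ contains rank-$3$ elements (the closure of any spanning tree, e.g.\ $\{e_{12},e_{13},e_{34}\}$, which is balanced since it is a forest), and GP would require the corresponding three hyperplanes to meet in a nonempty flat of codimension $3$---impossible when their normals have rank at most $2$. So your degeneracy destroys GP along with IGP; indeed $n = d+1$ is exactly the regime of Proposition \ref{Pfree}, where affinely independent points already have CGP. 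One needs more points than $d+1$: the paper's Example \ref{Xnsp} takes $n \geq d+2$ (separable) or $n \geq d+3$ (disconnected) and places the reference points as an orthogonal product of two generic lower-dimensional configurations, so that GP factors through the two pieces while simple position (hence IGP, by part (\ref{Tgp:sp})) necessarily fails. Your disconnected example in $\bbE^2$---two disjoint edges with $Q_1Q_3 \parallel Q_2Q_4$---is correct, and is in fact simpler than the paper's construction, in the spirit of Example \ref{Xd2}; but the separable half of the claim remains unproved in your write-up.
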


\begin{proof} 
We prove (\ref{Tgp:sp}) by contradiction.  Suppose a set $D$ of $Q_i$'s is minimally affinely dependent and has $\delta = |D| \leq d+1$.  Then $\dim D = \delta - 2 \leq d-1$.  IGP implies that $\Pinf(D) = \{ p_{ij} : Q_i, Q_j \in D \}$ has matroid $T_d(G(K_\delta))$, so $\dim \Pinf(D) = \min(d-1,\delta-2) = \delta - 2$.  But $\Pinf(D) \subseteq (\aff D)\atinf$, whose dimension $= \dim D - 1 = \delta - 3$.  Thus no such $D$ can exist.

For the rest we need a basic fact from \cite{WW}.  $[x,y]$ denotes the interval from $x$ to $y$.

\begin{lem}[Wachs and Walker {\cite[Thm.\ 3.2]{WW}}] \mylabel{Lww}
Up to isomorphism, a geometric semilattice has the form $L \setminus [e_0,\hat1]$ for one and only one geometric lattice $L$ and atom $e_0$ of $L$.
\end{lem}

In other words, a geometric semilattice determines the whole geometric lattice of which it is a part.  This is very important.

Here is how we use Lemma \ref{Lww}.  Set $\cH = \cH(\Phi;Q_1,\hdots,Q_n)$.  First we note that $[\Latb \Phi]_0^d = T_{d+1}(\Lat L_0(\Phi)) \setminus [e_0,\hat1]$, so $T_{d+1}(\Lat L_0(\Phi))$ is determined by $[\Latb \Phi]_0^d$.  Second, $\cL(\cH)$ determines $\cL(\cH_\bbP)$ since it equals $\cL(\cH_\bbP) \setminus [h_\infty,\hat1]$.  Combining these, GP $\implies$ PGP.  The converse is trivial.  Thus we have proved \eqref{Tgp:gp} and \eqref{Tgp:cgp}.

For \eqref{Tgp:igp} the essential fact is that $(\Lat L_0(\Phi))/e_0$ (the interval above $e_0$) equals $\Lat G(\Gamma)$ if $\Gamma$ is the underlying graph of $\Phi$.  Since $\cL(\cH_\bbP^{h_\infty})$, where $\cH_\bbP^{h_\infty} = \{ h_\bbP \cap h_\infty : h \in \cH \}$, is the interval $[h_\infty,\hat1] \subseteq \cL(\cH_\bbP)$, we know that $\cL(\cH_\bbP^{h_\infty}) \cong T_d(\Lat G(\Gamma))$ under the canonical correspondence $ h(e)\atinf \longleftrightarrow$ the atom of $\Lat G(\Gamma)$ that contains $e$.  If $\Phi$ is complete, $\Lat G(\Gamma) = \Lat G(K_n)$, so we are done.

The assertions about disconnected and separable $\Phi$ follow from Example \ref{Xnsp}.
\end{proof}

\begin{exam} \mylabel{Xnsp}
Let $\Phi$ be a gain graph that is the union of $\Phi_1$ and $\Phi_2$ where $|V_1 \cap V_2| = \delta \leq 1$, $n_1 = |V_1| \geq 2$, and $n_2 = |V_2| \geq 2$.  Then $n = |V| = n_1 + n_2 - \delta$.  Choose $d_1, d_2 > 0$ so that $d = d_1 + d_2$ satisfies $n \geq d + 3 - \delta$.  Choose reference points $Q_1^{(k)}, \hdots, Q_{n_k}^{(k)} \in \bbE^{d_k}$ for $\cH^{(k)} = \cH(\Phi_k; Q_1^{(k)}, \hdots, Q_{n_k}^{(k)})$ that have general position with respect to $\cH_k$.  Now embed $\bbE_{d_1}$ into $\bbE^d$ as $\bbE_{d_1} \times \{0\}$ and embed $\bbE_{d_2}$ into $\bbE^d$ as $\{0\} \times \bbE_{d_2}$.  If $\delta = 1$, choose coordinates in each $\bbE^{d_k}$ so that $Q_1^{(k)}$ is at the origin; then the embedding identifies $Q_1^{(1)}$ and $Q_1^{(2)}$.  If $\delta = 0$, choose coordinates so no $Q_i^{(k)}$ is at the origin.

There is an affinely dependent subset of $Q_1^{(k)}, \hdots, Q_{n_k}^{(k)}$ if $n_k \geq d_k + 2$.  The only way to avoid this for both $k = 1$ and $k = 2$ is to have $n_1+n_2 \leq d+2$, contrary to our choice of $d$.  Thus the set of all $n$ reference points has nonsimple position in $\bbE^d$.

This analysis overlooks the cases in which $n_1$ or $n_2 = 1$, but they are trivial.
\end{exam}

\begin{exam} \mylabel{Xnigp}
It would be nice to have also an example in which the reference points have simple position as well as general position with respect to $\Phi$ but not ideal general position.  Such examples exist at least for nontrivially disconnected gain graphs.  We outline one such example.

The idea is to embed $\bbE^{d_1}$ and  $\bbE^{d_2}$ in $\bbE^d$ as in Example \ref{Xnsp} and then to find a line and parallel hyperplane $h_Q$ that are affinely spanned by the reference points.  To assure simple position we must take $d_k = n_k-1 > 0$ so $d = n-2$.  The line is $Q_1^{(1)}Q_1^{(2)}$; the remaining points span the hyperplane $h_Q$.  In general we expect $Q_1^{(1)}Q_1^{(2)}$ and $h_Q$ to meet in $\bbE^d$.  However, if we apply a dilatation around $Q_1^{(2)}$ to $\bbE^{d_2}$ with expansion factor $\lambda > 0$, then generically there will be a choice of $\lambda$ that makes $Q_1^{(1)}Q_1^{(2)} \parallel h_Q$.  That precludes IGP, as Proposition \ref{Ppar} will show.
\end{exam}

\begin{exam} \mylabel{Xd2}
Let $d=2$ and $n \geq 4$ and take $\Phi = (K_n \setminus e_{12}, 0)$, that is, the complete graph less one edge, all edges having gain $0$.  The corresponding hyperplane arrangement consists of all but one of the perpendicular bisectors between $n$ points in the plane.

Choose the reference points so that $Q_1Q_2 \parallel Q_3Q_4$ but there are no other parallelisms.  The reference points do not have IGP because $p_{12} = p_{34}$, so $M(\Pinf) \neq T_2(M(K_n))$.  Since the arrangement of bisectors can clearly be made to have general position with respect to $\Phi$ by deforming the reference points slightly without losing the one required parallelism, we have an example in which $\Phi$ is as nearly complete as possible, yet which violates the conclusion of Theorem \ref{Tgp}\eqref{Tgp:igp}.

However, I have not seen how to make such an example in dimension $d > 2$.  Thus it is still possible that the hypothesis of completeness can be weakened in higher dimensions.
\end{exam}

These examples give some idea of the interrelations between simple and ideal general position on the one hand and general position with respect to $\Phi$ on the other, but they leave us far from a complete understanding.  Here are some obvious questions.

\begin{resprob}  \mylabel{RPgp}
(a) Is it possible to have GP and SP but not IGP with $n > d+2$?  If $\Phi$ is connected?

(b) Show that it is possible to have GP without IGP for inseparable gain graphs, in particular those with $2$-separations (and enough vertices).
\end{resprob}

\begin{prop} \mylabel{Pfree}
If the reference points are affinely independent, they have completely general position with respect to any gain graph of order $n$.
\end{prop}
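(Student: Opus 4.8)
The plan is to verify directly the two ingredients of completely general position---ideal general position (IGP) and general position with respect to $\Phi$ (GP)---since affine independence is precisely the extremal configuration that forces both. First I note that affine independence of $Q_1,\dots,Q_n$ requires $n\le d+1$, and together with the running hypothesis $n>d$ this pins down $n=d+1$. The gains of $\Phi$ will play no role whatsoever: IGP depends only on the geometry of the points, and GP will hold for dimensional reasons alone, which is what lets the conclusion cover \emph{every} gain graph of order $n$.

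For IGP I would compute $M(\Pinf)$ explicitly. Put $v_i=Q_i-Q_1$; affine independence says $v_2,\dots,v_n$ are linearly independent, hence a basis of $\bbR^d$. The ideal point $p_{ij}$ is the projective point determined by the direction vector $Q_j-Q_i=v_j-v_i$, so $M(\Pinf)$ is exactly the linear matroid of the vectors $\{\,v_j-v_i : i<j\,\}$ in $\bbR^d$ (no direction vanishes, since affinely independent points are distinct, so no $p_{ij}=\eset$ degenerates to a matroid loop). But attaching $v_j-v_i$ to the edge $ij$ is the standard coordinate representation of the polygon matroid $G(K_n)$: a set of edges is dependent exactly when it carries a circle, matching linear dependence of the corresponding differences. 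Hence $M(\Pinf)\cong G(K_n)$ under the canonical correspondence $p_{ij}\leftrightarrow ij$. Since $\rk G(K_n)=n-1=d$, truncation to rank $d$ changes nothing, so $M(\Pinf)\cong T_d(G(K_n))$ and IGP holds.

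For GP I would invoke Proposition \ref{Ppoint}. With IGP in hand, it suffices to check that no $\cS\subseteq\cH$ corresponds to a forest of $d+1$ edges in $\Phi$. This is vacuous: $\Phi$ has only $n=d+1$ vertices, so every forest in it has at most $n-1=d$ edges, and a forest of $d+1$ edges cannot occur. The hypothesis of Proposition \ref{Ppoint} is therefore satisfied trivially, yielding GP for any gain graph $\Phi$ of order $n$, whatever its gains. Combining the two halves gives CGP, as required.

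The argument is short, and the only step with any content is the IGP verification; its sole obstacle is bookkeeping---confirming that the projective dependence matroid of the ideal points really is the linear matroid of the direction differences, and that this is the familiar graphic representation of $G(K_n)$. Once the correspondence $p_{ij}\leftrightarrow v_j-v_i$ is made explicit there is nothing left to do, and in particular no genericity or deformation argument is needed: affine independence has already placed the ideal points in the one configuration that the definition of IGP demands.
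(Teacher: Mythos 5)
Your proof is correct and takes essentially the same route as the paper: the paper also establishes IGP by noting $M(\Pinf) = G(K_n)$ (your direction-vector computation just spells out what the paper calls clear) and then applies Proposition \ref{Ppoint} vacuously because $\Phi$ cannot contain a forest of $d+1$ edges. One small scope remark: there is no running hypothesis $n>d$ in this proposition, and none is needed---affine independence alone gives $n\le d+1$, so every forest in $\Phi$ has at most $n-1\le d$ edges and both halves of your argument go through unchanged even when $n<d+1$.
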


\begin{proof}
It is clear that they have ideal general position, indeed $M(\Pinf) = G(K_n)$.  Since $n \leq d+1$, $\Phi$ cannot contain a forest of $d+1$ edges.  So Proposition \ref{Ppoint} is vacuously satisfied.
\end{proof}


\subsection{At infinity} \mylabel{infin}

It may not be clear from our definition that ideal general position has any connection with parallelism.  Here we settle that.  A \emph{parallelism} within reference points $Q_1,\hdots,Q_n \in \bbE^d$ means a pair of flats, $f$ and $g$, generated by reference points so that neither is a point and $\eset \neq f_\bbP \cap g_\bbP \subseteq h_\infty$.  Equivalently, $d > \dim f, \dim g > 0$ and $\dim f + \dim g \geq \dim \aff(f \cup g)$, but $f \cap g = \eset$.

Again we employ the convenient, short notation $s\atinf = s_\bbP \cap h_\infty$ for an affine flat $s$.

\begin{prop} \mylabel{Ppar}
Points $Q_1,\hdots,Q_n \in \bbE^d$ have IGP if and only if they have no parallelisms.
\end{prop}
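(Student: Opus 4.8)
The plan is to reduce the statement to the rank inequality \eqref{E3} and to read both conditions off the ideal-point matroid. By Lemma \ref{L4}, $\bQ$ has ideal general position exactly when \eqref{E3} holds with equality for every $T\subseteq\Pinf$. Writing $M_1=M(\Pinf)$ and $M_2=T_d(G(K_n))$ on the common ground set of index pairs $ij$, this says $M_1=M_2$; and since \eqref{E3} is always the inequality $\rk M_1\le\rk M_2$, the two matroids lie in the weak order $M_1\le M_2$. Consequently IGP fails precisely when some edge set is independent in $M_2$ but dependent in $M_1$. On the geometric side I record that a flat $f=\aff(Q_i:i\in A)$ has ideal part $f\atinf$ spanned in $M_1$ by the ideal points of any spanning tree $T_A$ of $A$, with $\dim f\atinf=\rk_{M_1}(T_A)-1$; a parallelism is then exactly a pair $f=\aff A$, $g=\aff B$ with $f\atinf\cap g\atinf\ne\eset$ but $f\cap g=\eset$.

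For the implication ``IGP $\Rightarrow$ no parallelism'' I argue by contraposition. Given a parallelism, shrink $A$ and $B$ to affinely independent spanning sets, so $\dim f=|A|-1$ and $\dim g=|B|-1$, and take spanning trees $T_A,T_B$, each independent in $M_2$. The overlap $f\atinf\cap g\atinf\ne\eset$ forces $\dim(f\atinf\vee g\atinf)<\dim f\atinf+\dim g\atinf+1$, so $T_A\cup T_B$ is dependent in $M_1$. The crucial quantitative point is that, because $f\cap g=\eset$, the span $\aff(f\cup g)$ also contains the connecting direction, which lies outside $f\atinf\vee g\atinf$; hence $\dim(f\atinf\vee g\atinf)=\dim\aff(f\cup g)-2\le d-2$ and $\rk_{M_1}(T_A\cup T_B)\le d-1$. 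Therefore any $M_1$-circuit contained in the forest $T_A\cup T_B$ has at most $d$ edges, so it is independent in $M_2$ and dependent in $M_1$: a witness that $\bQ$ lacks IGP.

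For the converse I use the weak-order picture directly. Choose $T$ minimal among edge sets independent in $M_2$ but dependent in $M_1$; then $T$ is an $M_1$-circuit, and $M_2$-independence makes it a forest with at most $d$ edges. Let $T_1,\dots,T_k$ be its tree components, on vertex sets $A_1,\dots,A_k$, and put $f_j=\aff A_j$. If some $A_j$ is affinely dependent then $T_j$ alone is $M_1$-dependent, whence $T=T_j$ and $A_j$ is a minimal affine dependence of at most $d+1$ points. Otherwise every $A_j$ is affinely independent, $\dim(f_j)\atinf=|A_j|-2$, and in the principal case $k=2$ the circuit identity $\rk_{M_1}(T)=|T|-1$ forces $(f_1)\atinf\cap(f_2)\atinf$ to be a single ideal point and $\dim\aff(f_1\cup f_2)=\dim f_1+\dim f_2$; since $A_1\cap A_2=\eset$, the affine dimension formula makes the latter possible only when $f_1\cap f_2=\eset$. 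Thus $f_1,f_2$ form a parallelism.

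The main obstacle is this converse, and specifically the shape of a minimal $M_1$-circuit. In the single-component case $T$ is a tree on an affinely dependent vertex set, a minimal affine dependence of at most $d+1$ points — a failure of simple position, consistent with $M_1\ne M_2$ but supporting no parallelism, so the equivalence as stated implicitly presumes simple position (which IGP forces, by Theorem \ref{Tgp}\eqref{Tgp:sp}), under which this case does not arise. More seriously, for $k\ge3$ the ideal points of several pairwise non-parallel reference lines may be jointly dependent — three pairwise skew lines with coplanar directions is the smallest instance — with no two of the spanned flats parallel, so no grouping of the components into two flats $f,g$ with $f\cap g=\eset$ need exist. The crux is therefore to show that, under the standing hypotheses, every minimal ideal-dependence collapses to exactly two flats meeting only at infinity; I expect this to be where the argument is genuinely delicate, requiring either a sharper general-position hypothesis or a finer reading of ``parallelism'' than the bare two-flat definition supplies.
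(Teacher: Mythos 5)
Your first direction is sound and is in substance the paper's own argument: the paper also proves IGP $\Rightarrow$ no parallelism by a dimension count at infinity (assuming IGP together with a parallelism between flats spanned by $k$ and $l$ reference points, it finds a set $\Pinf'$ of ideal points of dimension $d-1$ by IGP that must lie in $(f\atinf)\cup(g\atinf)$, of dimension at most $d-2$); your contrapositive merely repackages the same count as an explicit circuit of at most $d$ edges, which is fine. The genuine gap is exactly where you located it: the converse. The paper does not attempt your minimal-circuit component analysis at all; it closes the converse in one stroke by quoting Mason's theorem on Dilworth truncations (\cite[\S 2.1]{Mason}, or \cite[Prop.\ 7.7.8]{Bryl}): a hyperplane that is nonsingular for the point configuration meets the $\binom{n}{2}$ connecting lines in a point set realizing $D_1(M)$, and with $M = U_{d+1}(n)$ this is $T_d(G(K_n))$, i.e.\ IGP; the absence of parallelisms is interpreted as nonsingularity of $h_\infty$. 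So the single idea your proposal lacks is this appeal to the Dilworth-truncation theorem, which asserts precisely the statement you could not prove --- that every minimal ideal dependence is caused by a two-flat incidence at infinity.

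That said, your two caveats are correct, and they cut against the paper's proof as much as against your attempt. First, the paper's application of Mason takes $M = U_{d+1}(n)$, i.e.\ simple position, which the hypothesis ``no parallelisms'' does not supply: three collinear reference points (the rest generic) admit no pair of flats as in the definition, yet destroy IGP because $p_{12} = p_{13} = p_{23}$; so the implication from no parallelisms to IGP tacitly presupposes simple position, exactly as you observed. Second, your $k \geq 3$ worry is real: six points in $\bbE^3$ spanning three pairwise skew lines whose direction vectors are linearly dependent make $p_{12}, p_{34}, p_{56}$ collinear, killing IGP, while for points otherwise generic (and in simple position) no two point-generated flats are disjoint yet meet at infinity --- the witnessing ideal line is the common transversal of the three skew lines at infinity, which is not the intersection of two flats spanned by reference points, so $h_\infty$ is nonsingular in the paper's stated sense. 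Hence either Mason's genericity condition is strictly stronger than the paper's paraphrase of ``singular,'' or the two-flat definition of parallelism is too weak for the stated equivalence; in either case the collapse of a many-component minimal dependence to a two-flat parallelism, which you correctly identified as the crux, is not freely available, and your closing suspicion that the statement needs a sharper hypothesis or a finer notion of parallelism is justified. (Note also that your $k=2$ case silently needs simple position: with $A_1 \cap A_2 = \eset$ alone, $f_1$ and $f_2$ could still meet in an affine line having the shared ideal point as its direction; simple position forbids this, since it would force the $|A_1|+|A_2| \leq d+2$ spanning points into a flat of dimension $|A_1|+|A_2|-3$.)
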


\begin{proof}
Call a projective hyperplane \emph{singular} if it contains either a reference point or the intersection of two flats that are generated by reference points and have nonempty intersection.  We can interpret the existence of a parallelism as resulting from having the reference points fixed in advance and choosing the ideal hyperplane so that it is singular although it contains no reference point.

A theorem of Mason (\cite[\S 2.1]{Mason}, or see \cite[Prop.\ 7.7.8]{Bryl}) says that, if $n$ points in $\bbP^d$ have matroid $M$, then a nonsingular hyperplane intersects the lines generated by those points in a point set whose matroid is $D_1(M)$, the first Dilworth lower truncation of $M$.  In our case $M = U_{d+1}(n)$, the uniform matroid, so $D_1(M) = T_{d+1}(G(K_n))$.  Thus if there are no parallelisms within the reference points, then $M(\Pinf) = T_d(G(K_n))$.  That proves IGP.

Suppose on the other hand that the reference points have IGP (hence simple position) and yet $h_\infty$ is singular without containing a reference point.  Say $h_\infty \supseteq f_\bbP \cap t_\bbP$, where $f$ and $g$ are spanned by $k$ and $l$ reference points, respectively, with $2 \leq k, l \leq d$.  Then $\dim f = k-1$, $\dim g = l-1$, $\dim (f \cup g) = \min(d, k+l-1)$, and $\dim(f_\bbP \cap g_\bbP) = k + l - 2 - \dim(f \cup g) = \max(k+l-d-2,-1)$.  Since $f_\bbP \cap g_\bbP \neq \eset$, $k+l-d-2 \geq 0$; that is, $k+l \geq d+2$.  Now, there are $\binom{k}{2}$ lines determined by the generators of $f$ and $\binom{l}{2}$ determined by the generators of $g$, which meet $h_\infty$ in a set $\Pinf'$ of points whose matroid, by IGP, is $T_d(G(K_k \cup K_l))$, where the union is disjoint.  This matroid has rank $= \min(k+l-2, d) = d$, whence $\dim \Pinf' = d-1$.  Yet $\Pinf'$ lies in $(f\atinf) \cup (g\atinf)$, whose dimension is $\dim(f_\bbP \cap h_\infty) + \dim(g_\bbP \cap h_\infty) - \dim(f_\bbP \cap g_\bbP \cap h_\infty) = \dim f + \dim g - 2 - \dim(f_\bbP \cap g_\bbP) = \dim(f \cup g) - 2 \leq d-2$.  Thus IGP contradicts singularity of $h_\infty$; we deduce the backward implication of the proposition.
\end{proof}


\subsection{Ideal points} \mylabel{pij}

To study dissections within a flat (Section \ref{induced}) we need more information about the set $\Pinf$ and especially about its subsets
$$
\Pinf(s) = \big\{ p_{ij} : \exists e \text{ for which } V(e) = \{i,j\} \text{ and } h(e) \supseteq s \big\}
$$
for $s \in \cL(\cH(\Phi;\bQ))$ and
$$
\Pinf(S) = \{ p_{ij} : [i] = [j] \text{ in } \pi(S) \},
$$
where $\pi(S)$ is the partition of $V$ induced by the edge set $S$ in $\Phi$.  Recalling that $E(s) = \{ e \in E(\Phi) : h(e) \supseteq s \}$, it is clear that
$$
\Pinf(s) \subseteq \Pinf(E(s)).
$$

\begin{lem} \mylabel{Lpij}
For a point $p_{ij} \in \Pinf$ and a flat $s \in \cL(\cH(\Phi;\bQ))$, the following properties are equivalent:
\equivslistbegin
\item\mylabel{Lpij:pij} $p_{ij} \in s\atinf^*$.
\item\mylabel{Lpij:h} There is a hyperplane $h \perp Q_iQ_j$ such that $h \supseteq s$.
\item\mylabel{Lpij:l} $Q_iQ_j \perp s$ (where we assume $s$ is not a point).
\equivslistend
\end{lem}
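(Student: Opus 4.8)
The plan is to route all three conditions through the single geometric statement (iii): that the direction $\vec{Q_iQ_j}$ is orthogonal to the direction space of $s$. Throughout I take $Q_i\neq Q_j$, so that $p_{ij}$ is a genuine ideal point, namely the point of $h_\infty$ carrying the direction of $\vec{Q_iQ_j}$. I identify $h_\infty$ with the projective space of directions in $\bbE^d$, under which $s\mapsto s^*$ is the orthogonality polarity: an ideal point lies in an ideal subspace precisely when the corresponding direction lies in the corresponding linear space of directions.

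First I would obtain (i) $\Leftrightarrow$ (iii) straight from the definition of the polarity. Applying that definition to the ideal subspace $s\atinf$ with the choice $s'=s$, we get $s\atinf^* = t\atinf$, where $t$ is an orthogonal complement of $s$; thus $s\atinf^*$ is the ideal subspace whose directions are exactly those orthogonal to the direction space of $s$. Since $p_{ij}$ is the ideal point of $\vec{Q_iQ_j}$, membership $p_{ij}\in s\atinf^*$ says exactly that $\vec{Q_iQ_j}$ is orthogonal to every direction of $s$, i.e.\ $Q_iQ_j\perp s$. (When $s$ is a point this reading is vacuous, which is why (iii) is restricted to non-point $s$; then $s\atinf=\eset$ forces $s\atinf^*=h_\infty$, so (i) holds automatically and agrees with (ii).)

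Next I would close the loop with (ii) $\Leftrightarrow$ (iii) through the normal/direction-space dictionary. A hyperplane $h\perp Q_iQ_j$ is precisely one whose direction space is $(\vec{Q_iQ_j})^\perp$. For (ii) $\Rightarrow$ (iii), if such an $h$ contains $s$ then the direction space of $s$ sits inside that of $h$, giving $Q_iQ_j\perp s$. For (iii) $\Rightarrow$ (ii), assuming $Q_iQ_j\perp s$ the direction space of $s$ lies in $(\vec{Q_iQ_j})^\perp$; I take $h$ to be the hyperplane with normal $\vec{Q_iQ_j}$ through any chosen point of $s$, so that $h\perp Q_iQ_j$ and, sharing a point with $s$ while containing its direction space, $h\supseteq s$.

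The whole argument is a matter of unwinding definitions, so there is no serious obstacle. The one step needing care is (iii) $\Rightarrow$ (ii): the containment $h\supseteq s$ is not merely a statement about ideal parts but requires both the direction-space inclusion \emph{and} a common point, which is why $h$ must be chosen to pass through an actual point of $s$ rather than simply be prescribed by its normal direction.
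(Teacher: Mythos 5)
Your proof is correct and takes essentially the same approach as the paper's: both unwind the ideal polarity as orthogonality of direction spaces, with the one substantive step being the construction of the witnessing hyperplane through a point $P \in s$ perpendicular to $Q_iQ_j$ (the paper builds the very same hyperplane projectively, as the finite part of $\Span(p_{ij}^* \cup \{P\})$). The differences are purely organizational---the paper proves (i)$\Leftrightarrow$(ii) directly via $p_{ij} \in s\atinf^* \iff p_{ij}^* \supseteq s\atinf$ and calls (ii)$\Leftrightarrow$(iii) obvious, while you pivot both through (iii) and make the point case explicit, which the paper leaves implicit.
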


\begin{proof}
The equivalence of \eqref{Lpij:h} and \eqref{Lpij:l} is obvious.

As for that of \eqref{Lpij:pij} and \eqref{Lpij:h}, first note that $p_{ij} \in s\atinf^* \iff p_{ij}^* \supseteq s\atinf$.  Now, in one direction, if $h \perp Q_iQ_j$ and $h \supseteq s$, then $p_{ij}^* = h\atinf \supseteq s\atinf$.  Conversely, if $p_{ij}^* \supseteq s\atinf$, let $P \in s$, $h_\bbP = \Span(p_{ij}^* \cup \{P\})$, and $h =$ the finite part of $h_\bbP$.  Then $h$ satisfies the conditions of \eqref{Lpij:h}, because $h\atinf = p_{ij}^* \iff h \perp Q_iQ_j$.
\end{proof}

We write $\proj_s P$ for the orthogonal projection of a point $P \in \bbE^d$ into a flat $s$.

\begin{lem} \mylabel{Lproj}
If $[i] = [j]$ in $\pi(E(s))$, then $\proj_s Q_i = \proj_s Q_j$.
\end{lem}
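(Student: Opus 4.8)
The plan is to reduce the statement to the case of a single edge and then invoke one elementary fact about orthogonal projection. First I would unwind the hypothesis: by definition of $\pi(E(s))$, the condition $[i]=[j]$ says that $i$ and $j$ lie in the same block, so there is a walk $i = i_0, i_1, \ldots, i_k = j$ in which each consecutive pair $\{i_{t-1}, i_t\}$ is the vertex set of some edge $e_t \in E(s)$. Since $\proj_s Q_{i_0} = \proj_s Q_{i_1} = \cdots = \proj_s Q_{i_k}$ would give the claim by transitivity, it suffices to prove $\proj_s Q_i = \proj_s Q_j$ under the assumption that $i$ and $j$ are the endpoints of a single edge $e \in E(s)$.

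So fix such an $e$, with $V(e) = \{i,j\}$. By definition of $E(s)$ we have $h(e) \supseteq s$, and by construction $h(e) \perp Q_iQ_j$. Writing $\vec s$ for the direction space of $s$, the inclusion $s \subseteq h(e)$ makes $\vec s$ a linear subspace of the direction space of $h(e)$; and perpendicularity of the line $Q_iQ_j$ to the hyperplane $h(e)$ means precisely that $Q_j - Q_i$ is orthogonal to the entire direction space of $h(e)$. Hence $Q_j - Q_i \perp \vec s$. (When $s$ is not a point this is the equivalence \eqref{Lpij:h} $\iff$ \eqref{Lpij:l} of Lemma \ref{Lpij}, read for the particular hyperplane $h(e)$.)

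The one remaining step is the key one, and it is purely linear-algebraic. The map $\proj_s$ is affine, and its linear part carries any vector to its orthogonal projection onto $\vec s$; applied to the vector $Q_j - Q_i$ it yields $\proj_s Q_j - \proj_s Q_i$. Since $Q_j - Q_i \perp \vec s$, that orthogonal projection is the zero vector, whence $\proj_s Q_i = \proj_s Q_j$, as required. I do not anticipate any genuine obstacle here: the argument is uniform in $\dim s$ (when $s$ is a point, $\vec s = \{0\}$ and both projections equal $s$ trivially), and everything rests on the single observation that orthogonal projection onto a flat annihilates the vectors orthogonal to that flat.
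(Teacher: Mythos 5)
Your proposal is correct and follows essentially the same route as the paper: reduce to the case of a single edge $e \in E(s)$ via a path (the paper's ``In general, $[i]=[j]$ means that $i$ and $j$ are joined by a path\dots''), then use $h(e) \supseteq s$ together with $h(e) \perp Q_iQ_j$ --- exactly the content of Lemma \ref{Lpij} --- to conclude $Q_iQ_j \perp s$ and hence equality of the projections. Your only additions are to spell out the final linear-algebra step and to note explicitly that the point case $\vec s = \{0\}$ is trivial, both of which the paper leaves implicit.
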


\begin{proof}
Let us first look at the case in which $i$ and $j$ are adjacent by an edge $e \in E(s)$.  Then $h(e) \supseteq s$, hence $Q_iQ_j \perp s$ by Lemma \ref{Lpij}, whence $Q_i$ and $Q_j$ have the same projection.

In general, $[i] = [j]$ means that $i$ and $j$ are joined by a path $i = i_0, e_1, i_1, \hdots, e_l, i_l = j$ in $E(s)$.  Thus $\proj_s Q_{i_0} = \proj_s Q_{i_1} = \dots = \proj_s Q_{i_l}$.
\end{proof}

\begin{lem} \mylabel{Lptn}
$\Pinf(E(s)) \subseteq s\atinf^*$.
\end{lem}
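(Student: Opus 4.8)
The plan is to pick an arbitrary $p_{ij} \in \Pinf(E(s))$ and show $p_{ij} \in s\atinf^*$, chaining the two lemmas just proved. By definition, $p_{ij} \in \Pinf(E(s))$ says exactly that $[i] = [j]$ in the partition $\pi(E(s))$; the whole task is to convert this combinatorial statement into the geometric membership, and Lemmas \ref{Lproj} and \ref{Lpij} are precisely the two bridges needed.

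First I would feed the hypothesis $[i] = [j]$ in $\pi(E(s))$ directly into Lemma \ref{Lproj}, obtaining $\proj_s Q_i = \proj_s Q_j$; write $P_0$ for this common foot, so $P_0 \in s$. The crux is then to read perpendicularity off from the coincidence of projections: since both $Q_i - P_0$ and $Q_j - P_0$ are orthogonal to $s$, so is their difference $Q_i - Q_j$, and hence the line $Q_iQ_j$ is perpendicular to $s$. This is condition \eqref{Lpij:l} of Lemma \ref{Lpij}, whose equivalence with \eqref{Lpij:pij} delivers $p_{ij} \in s\atinf^*$, completing the argument.

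I do not expect a genuine obstacle, since the lemma is essentially the composite of Lemmas \ref{Lproj} and \ref{Lpij}; the only point requiring care is the degenerate case barred by the parenthetical hypothesis of Lemma \ref{Lpij}\eqref{Lpij:l}. When $s$ is a single point $P_0$, I would instead invoke characterization \eqref{Lpij:h}: the hyperplane through $P_0$ perpendicular to $Q_iQ_j$ trivially contains $s$, so again $p_{ij} \in s\atinf^*$ (consistently, $s\atinf = \eset$ has polar $s\atinf^* = h_\infty$, which contains every $p_{ij}$). In fact \eqref{Lpij:h} dispatches all cases uniformly: because $P_0 \in s$ and the direction of $s$ is orthogonal to $Q_j - Q_i$, the hyperplane through $P_0$ normal to $Q_iQ_j$ contains every point of $s$. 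The one remaining triviality is $Q_i = Q_j$, possible solely for nonadjacent vertices, where $p_{ij} = \eset$ and membership is vacuous.
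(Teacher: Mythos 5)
Your proof is correct and takes essentially the same route as the paper's, which is the one-line chain: definition of $\Pinf(E(s))$, then Lemma \ref{Lproj} to get $\proj_s Q_i = \proj_s Q_j$, then $Q_iQ_j \perp s$, then Lemma \ref{Lpij} to conclude $p_{ij} \in s\atinf^*$. Your added care with the degenerate case of $s$ a point (invoking characterization \eqref{Lpij:h} instead of \eqref{Lpij:l}) is a small refinement the paper's terse proof glosses over, but it does not change the argument.
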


\begin{proof}
$p_{ij} \in \Pinf(E(s)) \iff$ (by definition) $[i] = [j] \implies$ (by Lemma \ref{Lproj}) $\proj_s Q_i = \proj_s Q_j$ $\iff Q_iQ_j \perp s \iff$ (by Lemma \ref{Lpij}) $p_{ij} \in s\atinf^*$.
\end{proof}

\begin{lem} \mylabel{Lsinf}
If $\bQ$ has ideal general position, then $\Span \Pinf(s) = s\atinf^*$.
\end{lem}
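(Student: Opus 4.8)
The plan is to prove the asserted equality by sandwiching $\dim \Span \Pinf(s)$ between $\codim s - 1$ from above and from below, and then upgrading an already-established containment to equality via equal dimensions.

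First I would dispose of the easy inclusion. By the containment $\Pinf(s) \subseteq \Pinf(E(s))$ noted just before Lemma \ref{Lpij}, together with Lemma \ref{Lptn}, we have $\Pinf(s) \subseteq s\atinf^*$; since $s\atinf^*$ is a projective subspace, this gives $\Span \Pinf(s) \subseteq s\atinf^*$ at once. It therefore suffices to show that the two spans have the same dimension.

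Next I would record both dimensions. On the geometric side, polarity inside $h_\infty \cong \bbP^{d-1}$ sends a subspace of dimension $k$ to one of dimension $d-2-k$; since $\dim s\atinf = \dim s - 1 = (d - \codim s) - 1$ (interpreting $\dim \eset = -1$ when $s$ is a point), we get $\dim s\atinf^* = \codim s - 1$. On the combinatorial side, because $\bQ$ has ideal general position, Lemma \ref{L4} (with equality in \eqref{E3}) applies to the point set $\Pinf(s) \subseteq \Pinf$, giving $\dim \Span \Pinf(s) = \min\!\big(n - c(\Pinf(s)) - 1,\, d-1\big)$, where $c(\Pinf(s)) = c(E(s))$ because the graph underlying $\Pinf(s)$ has exactly the same components as $E(s)$.

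The crux is the lower bound $\dim \Span \Pinf(s) \geq \codim s - 1$. Here one must resist asserting $c(E(s)) = n - \codim s$: under mere ideal general position this can fail, as Example \ref{Xnongeneric} shows, where an accidental concurrence makes $n - c(E(s))$ strictly exceed $\codim s$. Instead I would argue with inequalities. Since $s \neq \eset$, the set $E(s)$ contains no unbalanced circle by Lemma \ref{L3}(a), hence is balanced; by Lemma \ref{L2a} I may delete one edge from each balanced circle without changing $\bigcap$ or the number of components, reducing to a spanning forest of $n - c(E(s))$ hyperplanes whose intersection is still $s$, so that $\codim s \leq n - c(E(s))$. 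Also $\codim s \leq d$ because $s$ is nonempty. Feeding both bounds into the formula gives $\min(n - c(E(s)) - 1,\, d - 1) \geq \codim s - 1$, which is the desired lower bound. Combined with the upper bound $\dim \Span \Pinf(s) \leq \dim s\atinf^* = \codim s - 1$ from the first inclusion, the two dimensions agree, and the inclusion $\Span \Pinf(s) \subseteq s\atinf^*$ of equal-dimensional projective subspaces must be an equality. The step I expect to be the main obstacle is precisely this lower bound: recognizing that ideal general position alone does not pin $c(E(s))$ to $n - \codim s$, so one cannot read the dimension off the component count directly but must route through the two inequalities $\codim s \le n - c(E(s))$ and $\codim s \le d$ and exploit the truncation at $d-1$ built into Lemma \ref{L4}.
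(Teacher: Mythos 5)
Your proof is correct, and its skeleton is the paper's: first the containment $\Span \Pinf(s) \subseteq s\atinf^*$ (the paper gets this in one step from Lemma \ref{Lpij}; your detour through $\Pinf(E(s))$ and Lemma \ref{Lptn} is equally valid), then a dimension comparison in which ideal general position supplies $\dim \Span \Pinf(s) = \min(n - c(E(s)) - 1,\, d-1)$ and polarity supplies $\dim s\atinf^* = \codim s - 1$. Where you genuinely deviate is in the last step. The paper closes the gap by computing the dimension \emph{exactly} via Lemma \ref{L5}, with a case split: for $s$ not a point it asserts $\rk E(s) = \codim s \le d$, and for $s$ a point it gets $\rk E(s) \ge \codim s = d$. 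You instead establish only the one-sided bound $\codim s \le n - c(E(s))$ by inlining the circle-deletion/forest reduction from Lemmas \ref{L3} and \ref{L2a} --- which is precisely the engine inside the proof of Lemma \ref{L5} --- combine it with the trivial $\codim s \le d$, and sandwich. This buys a uniform argument with no point/non-point dichotomy, at the cost of re-proving part of Lemma \ref{L5}; citing that lemma would reduce your crux to one line. One refinement of your cautionary remark: under mere ideal general position the equality $n - c(E(s)) = \codim s$ can fail \emph{only} when $s$ is a point (the accidental concurrence of Example \ref{Xnongeneric} is a point flat, where $n - c(E(s)) > d = \codim s$); for flats of positive dimension Lemma \ref{L5} guarantees the equality under IGP alone, and the paper asserts it exactly in that case, so its appeal is sound --- though your inequality route neatly avoids ever having to notice the dichotomy.
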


\begin{proof}
First, $p_{ij} \in \Pinf(s) \iff \exists h(e) \supseteq s$ with $V(e) = \{i,j\} \implies p_{ij} \in s\atinf^*$ by Lemma \ref{Lpij}.  Thus $\Pinf(s) \subseteq s\atinf^*$.

Now we compare dimensions.  Because $M(\Pinf) \cong T_d(G(K_n))$ (by ideal general position) and $E(s)$ is balanced (by Lemma \ref{L3}), $\dim \Pinf(s) = \min(d-1, \rk E(s)-1)$.  If $s$ is not a point, $\rk E(s) = \codim s \leq d$ (by Lemma \ref{L5}) so $\dim \Pinf(s) = \rk E(s) - 1$.  At the same time, $\dim s\atinf^* = d-2 - \dim s\atinf = d-1 - \dim s = \rk E(s) - 1$.  Since this equals $\dim \Pinf(s)$ while also $\Pinf(s) \subseteq s\atinf^*$, $\Pinf(s)$ spans $s\atinf^*$ if $s$ is not a point.  If $s$ is a point, then $\rk E(s) \geq \codim s = d$ (again by Lemma \ref{L5}), so $\dim \Pinf(s) = d-1$.  Consequently, $\Pinf(s)$ spans $h_\infty = s\atinf^*$.
\end{proof}

\begin{prop} \mylabel{Pptn}
Suppose $\bQ$ has ideal general position and $s \in \cL(\cH(\Phi;\bQ))$ but $s$ is not a point.  Then $\Pinf(E(s)) = \Pinf \cap \Span \Pinf(s)$, the closure of $\Pinf(s)$ in $M(\Pinf)$.
\end{prop}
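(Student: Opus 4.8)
The plan is to recognize both sides as a single matroid closure in $M(\Pinf)$ and to evaluate that closure combinatorially through the isomorphism $M(\Pinf) \cong T_d(G(K_n))$ supplied by ideal general position. Since $M(\Pinf)$ is the projective dependence matroid on the point set $\Pinf \subseteq h_\infty$, hence linearly representable, the matroid closure of any subset $A \subseteq \Pinf$ is exactly $\Pinf \cap \Span A$. Thus the right-hand side $\Pinf \cap \Span \Pinf(s)$ is literally the matroid closure of $\Pinf(s)$, and the whole problem reduces to showing that this closure equals $\Pinf(E(s))$.

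First I would transport the computation to the graphic side. Under the canonical correspondence $p_{ij} \leftrightarrow ij$, ideal general position gives $M(\Pinf) \cong T_d(G(K_n))$, and $\Pinf(s)$ corresponds precisely to the support of the edge set $E(s)$: the simple graph on $\{1,\hdots,n\}$ whose edges are the pairs $\{i,j\}$ joined by some $e \in E(s)$. Passing to this support alters neither the connected components nor the induced partition, so $c$ and $\pi$ of $\Pinf(s)$ agree with $c(E(s))$ and $\pi(E(s))$. By Lemma \ref{L3} the set $E(s)$ is balanced, whence by Lemma \ref{L5} its rank is $\rk E(s) = \codim s = n - c(E(s))$.

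The key observation is that the rank-$d$ truncation in $T_d(G(K_n))$ is inert here. Because $s$ is not a point, $\dim s \geq 1$, so $\rk \Pinf(s) = \codim s \leq d-1 < d$; consequently the closure of $\Pinf(s)$ in $T_d(G(K_n))$ coincides with its closure in the full graphic matroid $G(K_n)$. In a graphic matroid the closure of an edge set is the set of all edges lying within its connected components; applied to the support of $E(s)$ this is exactly $\{p_{ij} : [i]=[j] \text{ in } \pi(E(s))\} = \Pinf(E(s))$. Chaining the equalities — matroid closure $=\Pinf \cap \Span$, isomorphism to $T_d(G(K_n))$, truncation inert, graphic closure $=$ same-component edges — yields $\Pinf \cap \Span \Pinf(s) = \Pinf(E(s))$.

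The main obstacle, and the only point requiring real care, is confirming that the truncation does not interfere: one must verify $\rk \Pinf(s) < d$, which is precisely where the hypothesis that $s$ is not a point enters, since a point would force $\codim s = d$ and collapse the closure to all of $h_\infty$. Everything else is bookkeeping: identifying $\Pinf(s)$ with the support of $E(s)$, invoking balance (Lemma \ref{L3}) and the rank formula (Lemma \ref{L5}), and quoting the standard description of closure in a graphic matroid. As a consistency check, the inclusion $\Pinf(E(s)) \subseteq \Pinf \cap \Span \Pinf(s)$ follows independently from Lemmas \ref{Lptn} and \ref{Lsinf}, which together give $\Pinf(E(s)) \subseteq s\atinf^* = \Span \Pinf(s)$.
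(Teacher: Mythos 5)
Your proof is correct and follows essentially the same route as the paper's: both rest on the isomorphism $M(\Pinf) \cong T_d(G(K_n))$ from ideal general position, on the rank computation $\rk E(s) = \codim s \leq d-1$ (Lemmas \ref{L3} and \ref{L5}, with ``$s$ is not a point'' supplying the strict bound that makes the truncation inert), and on the fact that the relevant graphic closure---all edges within components---is exactly $\Pinf(E(s))$. The only difference is organizational: you evaluate the matroid closure of $\Pinf(s)$ directly on the graphic side, whereas the paper shows that $\Pinf(E(s))$ is a flat of $M(\Pinf)$ spanning the same subspace $s\atinf^*$ as $\Pinf(s)$ (via Lemmas \ref{Lptn} and \ref{Lsinf})---the same computation phrased as ``flat plus common span'' rather than as a direct closure evaluation.
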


\begin{proof}
We know that $\Span \Pinf(s) = s\atinf^* \supseteq \Pinf(E(s)) \supset \Pinf(s)$.  Thus $\Pinf(E(s))$ spans $s\atinf^*$. However, it may not equal $\Pinf \cap \Span \Pinf(s)$ if $s$ is a point, so we now assume $\dim s > 0$.  Therefore $\rk E(s)) = \codim s \leq d-1$, from which we see that $\dim s\atinf^* > d-1$ so $\rk \Pinf(E(s)) < d$ in $M(\Pinf)$.  Since $M(\Pinf) \cong T_d(G(K_n))$ and $\Pinf(E(s))$ (from its definition) corresponds to a flat in $G(K_n)$ of rank $< d$, $\Pinf (E(s))$ is itself a flat in $M(\Pinf)$.  That means $\Pinf(E(s)) = \Pinf \cap \Span \Pinf(E(s)) = \Pinf \cap s\atinf^*$.  Because $\Pinf(s)$ also spans $s\atinf^*$, we conclude that $\Pinf(E(s)) = \Pinf \cap \Span \Pinf(s)$, as desired.
\end{proof}

\begin{cor} \mylabel{Pproj}
Suppose $\bQ$ has ideal general position and $s \in \cL(\cH(\Phi;\bQ))$ is not a point.  Then $\proj_s Q_i = \proj_s Q_j \iff [i] = [j]$ in $\pi(E(s))$.
\end{cor}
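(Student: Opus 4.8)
The plan is to prove the corollary as a single chain of equivalences built from the lemmas just established. One direction, namely $[i]=[j]$ in $\pi(E(s)) \implies \proj_s Q_i = \proj_s Q_j$, is already Lemma \ref{Lproj} and needs nothing further; so the real content lies in the converse, and in fact every implication in the argument reverses, yielding both directions at once.

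For the converse I would argue as follows. Since $\bQ$ has ideal general position the reference points are distinct, so $p_{ij}$ is a genuine ideal point of $\Pinf$. The geometric meaning of orthogonal projection into the affine flat $s$ is that $\proj_s Q_i = \proj_s Q_j$ exactly when $Q_i - Q_j$ is orthogonal to the direction space of $s$, that is, exactly when $Q_iQ_j \perp s$. Because $s$ is not a point, Lemma \ref{Lpij} converts this into the projective statement $p_{ij} \in s\atinf^*$, and Lemma \ref{Lsinf} (which uses ideal general position) rewrites $s\atinf^* = \Span \Pinf(s)$. Hence $p_{ij} \in \Pinf \cap \Span \Pinf(s)$. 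Here Proposition \ref{Pptn} supplies the essential step: since $s$ is not a point, that intersection equals $\Pinf(E(s))$, so $p_{ij} \in \Pinf(E(s))$, which by the very definition of $\Pinf(E(s))$ says precisely that $[i]=[j]$ in $\pi(E(s))$.

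The argument is short because all the difficulty has been localized in the preceding results. The only genuinely new verification is the elementary equivalence $\proj_s Q_i = \proj_s Q_j \iff Q_iQ_j \perp s$, obtained by writing the orthogonal projection as a translation composed with the linear projection onto the direction space of $s$ and noting that $Q_i$ and $Q_j$ have equal images iff their difference lies in the kernel of that linear map. I expect the only point requiring care to be the standing hypothesis that $s$ is not a point: it is needed both for the perpendicularity form of Lemma \ref{Lpij} and for Proposition \ref{Pptn}, whose identification of $\Pinf \cap \Span\Pinf(s)$ with $\Pinf(E(s))$ can fail when $s$ degenerates to a point. With that hypothesis in force the chain of equivalences closes and the corollary follows.
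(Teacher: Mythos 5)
Your proposal is correct and takes essentially the same route as the paper's own proof: sufficiency is Lemma \ref{Lproj}, and necessity runs through $\proj_s Q_i = \proj_s Q_j \iff Q_iQ_j \perp s$, then Lemma \ref{Lpij} to get $p_{ij} \in s\atinf^*$, then Proposition \ref{Pptn} to conclude $p_{ij} \in \Pinf(E(s))$, i.e., $[i]=[j]$. Your only departures are cosmetic: you cite Lemma \ref{Lsinf} explicitly to identify $s\atinf^*$ with $\Span\Pinf(s)$ (the paper leaves this buried in Proposition \ref{Pptn}) and you note that the whole chain consists of equivalences, which is accurate but yields nothing beyond the paper's two-directional argument.
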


\begin{proof}
Sufficiency is Lemma \ref{Lproj}.  For necessity, suppose that $Q_i$ and $Q_j$ have the same porjection.  Then $Q_iQ_j \perp s$, so by Lemma \ref{Lpij}, $\_{ij} \in s\atinf^*$.  But then by Proposition \ref{Pptn}, $p_{ij} \in \Pinf(E(s))$.  That is, $[i] = [j]$.
\end{proof}


\section{Cross-sections} \mylabel{xsect} 

Peering into a flat $t$ in $\bbE^d$, we see it dissected into pieces by the hyperplanes of $\cH(\Phi;$ $Q_1, \hdots, Q_n)$ that do not contain it.  Does this induced dissection result from a Pythagorean arrangement of relative hyperplanes, as Good and Tideman said of certain flats of their original arrangement?  Indeed it does---and this will enable us to explain why a flaw in Good and Tideman's proof does not invalidate their enumerative results.  In order to explain how an induced arrangement is Pythagorean, we need notions of contraction and therefore of switching of a gain graph.

\subsection{Switching and contraction} \mylabel{switch}
A \emph{switching function} on a real, additive gain graph $\Phi$ is simply a function $\eta : V(\Phi) \to \bbR^+$.  The secret is in how to use it.  \emph{Switching $\Phi$ by $\eta$} means replacing the gain function $\phi$ by $\phi^\eta$ whose definition is
$$
\phi^\eta(e;i,j) = \phi(e;i,j) - \eta(i) + \eta(j).
$$
We write $\Phi^\eta$ for the switched gain graph.  (Since $\Phi^\eta$ is unaffected by adding a constant to $\eta$, we can always take $\eta \geq 0$.)  The difference between $\cH(\Phi;\bQ)$ and $\cH(\Phi^\eta;\bQ)$ is in the appearance of offsets in the locations of perpendiculars.  Instead of being placed at 
$\psi_{ij}(h) = \phi(e;i,j)$, the hyperplane $h(e)$ is now located at 
$\psi_{ij}(h) = \phi(e;i,j) - \eta(i) + \eta(j)$.  In full, the relocated 
$h(e)$ consists of the points $P$ for which
$$
[d(P,Q_i)^2 + \eta(i)] - [d(P,Q_j)^2 + \eta(j)] = \phi(e;i,j).
$$
Thus $\eta$ modifies the location rule by offsetting $h(e)$ along the line $Q_iQ_j$.  Switching can change the combinatorial type of $\cL(\cH)$ and therefore also of $\cH$ because it can put the reference points into special position with respect to $\Phi^\eta$ even though they were general with respect to $\Phi$.

(Pythagorean arrangements of the form $\cH((K_n,0)^\eta;\bQ)$ with $\eta \geq 0$ correspond to the weighted Voronoi diagrams known as ``power diagrams''.  There $\eta(i)$, sometimes called the ``weight of $Q_i$'', is interpreted as the squared radius of a sphere $S_i$ centered at $Q_i$; thus $d(P,Q_i)^2 - \eta(i) = t_i(P)^2$, the squared length of a tangent from $P$ to $S_i$, provided $P$ lies outside all spheres.  The hyperplane $h_{ij}$ is located where $t_i(P) = t_j(P)$.\footnote{Nonnegative weights have a long history; see \cite{Auren} or \cite[Section 13.6]{Edels}.  Arbitrary real weights appear to be rare (an exception is \cite{AB}), possibly because they do not have the nice power-diagram interpretation.  The literature, however, concerns only nearest reference points (as weighted, i.e., after allowing for offsets) and variations in the same spirit and consequently uses only pieces of dissecting hyperplanes instead of entire hyperplanes.  It is also limited to what we should call all-zero gains.})

Now we define contraction of a balanced edge set in $\Phi$.  Suppose $S \subseteq E(\Phi)$ has all zero gains; that is, $\phi\big|_S \equiv 0$.  Let $\pi(S)$ be the partition of the vertices implied by the connected components of $(V,S)$ and 
let $[i]$ denote the block of $\pi(S)$ that contains $i$.  The \emph{contraction} 
$\Phi/S$ has vertex set $\pi(S)$ and edge set $E(\Phi) \setminus S$.  An edge 
$e$ with endpoints $i,j$ in $\Phi$ has endpoints $[i], [j]$ in $\Phi/S$.  (This process may introduce loops.  We discard any loops whose gain is nonzero.)  Since the definition permits us to contract only zero-gain edges, we need switching to contract an arbitrary balanced edge set $S$.  
There is always a switching function $\eta$ such that $\phi^\eta\big|_S \equiv 0$.  
We choose such a function, switch $\Phi$ by it, and contract $S$.  Thus we get 
\emph{a contraction} $\Phi^\eta/S$ of $\Phi$.  This contraction is not unique: there is a different one for each choice of $\eta$ (up to an additive constant).  (In that respect our definition is more refined than the standard one found, e.g., in \cite[\S I.5]{BG}.  Usually it is enough for a gain graph to be specified up to 
switching; thus $\Phi$ and $\Phi^\eta$ would be equivalent and it would not matter how we switch $\Phi$ because all contractions $\Phi^\eta/S$ would also be equivalent.  But that is not so here, since switching changes the hyperplane arrangement.  In discarding nonzero loops our definition is cruder than the usual one; we delete them because they correspond to nothing in $\bbE^d$.)  A fact we shall have use for is that, if $\Phi$ is balanced, then $\Phi/S$ is again balanced.

We also need \emph{collapsing by a partition} $\pi$ of the vertex set $[n]$: we take $\Phi/\pi$ to be the gain graph with the same edges and the same gain function as in $\Phi$ but with all the vertices in each block of $\pi$ identified to a point and with all loops removed.  This is much like contraction, but for consistency with general custom we confine the latter name to contraction by an edge set.

\subsection{Dissections within a flat} \mylabel{induced}
To describe the induced dissection of an affine flat $t$ we still need a few short definitions.  The \emph{induced arrangement} in $t$ is 
$$
\cH^t = \big\{ h \cap t : h \in \cH, h \not\supseteq t, \text{ and } h \cap t \neq \eset \big\}.
$$
Write $\proj_t P$ for the orthogonal projection of a point $P$ into $t$ and let $\pi(t)$ be the partition of $[n]$ that corresponds to the equivalence relation defined by $i \sim j$ if $\proj_t Q_i = \proj_t Q_j$.  Set $\eta_t(i) = d(Q_i,t)^2$.

\begin{thm} \mylabel{Tsect}
Take a fixed real, additive gain graph $\Phi$ of order $n$ and points 
$Q_1, \hdots, Q_n \in \bbE^d$.  
Let $\cH = \cH(\Phi; Q_1, \hdots, Q_n)$.  For an affine flat $t$, let 
$\bar Q_1, \hdots, \bar Q_k$ be the distinct points $\proj_t Q_i$; thus 
$k = |\pi(t)|$.  Then
$$
\cH^t = \cH(\Phi^{\eta_t}/\pi(t); \bar Q_1, \hdots, \bar Q_k).
$$

If $t$ is generic of codimension $d'$, then $\cL(\cH^t)$ is isomorphic to the semilattice of all flats of $\cH$ of dimension at least $d'$.

Furthermore, if $Q_1, \hdots, Q_n$ have ideal general position, then $\bar Q_1, \hdots, \bar Q_k$ also have it.
\end{thm}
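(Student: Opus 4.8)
\emph{Overview.} The plan is to prove the three assertions in order, the identity first since it is the crux and the other two rest on its setup. For the identity I would fix an edge $e$ of $\Phi$ with endpoints $i,j$, write $\bar Q_{[i]} = \proj_t Q_i$ and $\eta_t(i) = d(Q_i,t)^2$, and compute for a point $P \in t$. The Pythagorean theorem applied to the orthogonal decomposition $Q_i - P = (Q_i - \proj_t Q_i) + (\proj_t Q_i - P)$ gives $d(P,Q_i)^2 = d(P,\bar Q_{[i]})^2 + \eta_t(i)$; subtracting the same identity for $j$ turns the defining equation $d(P,Q_i)^2 - d(P,Q_j)^2 = \phi(e;i,j)$ of $h(e)$ into
$$
d(P,\bar Q_{[i]})^2 - d(P,\bar Q_{[j]})^2 = \phi(e;i,j) - \eta_t(i) + \eta_t(j) = \phi^{\eta_t}(e;i,j).
$$
This is exactly the equation, inside $t$, of the Pythagorean hyperplane of $e$ in $\cH(\Phi^{\eta_t}/\pi(t); \bar Q_1,\dots,\bar Q_k)$. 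If $[i] \neq [j]$ then $\bar Q_{[i]} \neq \bar Q_{[j]}$, the left side is a nonconstant affine function of $P$ on $t$, and $h(e)\cap t$ is a genuine relative hyperplane, matching the edge that survives the collapse. If $[i] = [j]$ then the left side vanishes identically, $e$ becomes a loop discarded in $\Phi^{\eta_t}/\pi(t)$, and correspondingly either $h(e) \supseteq t$ or $h(e) \cap t = \eset$, so $e$ contributes nothing to $\cH^t$. Hence $e \mapsto h(e)\cap t$ is the required bijection and the identity follows.

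\emph{The semilattice isomorphism.} With $t$ generic of codimension $d'$, I would record the finitely many open dense transversality conditions: $t \not\subseteq h$ for all $h \in \cH$; every $s \in \cL(\cH)$ with $\dim s \geq d'$ meets $t$ with $s \cap t \neq \eset$ and $\dim(s \cap t) = \dim s - d'$; and $s \cap t = \eset$ when $\dim s < d'$. Under these, $\cL(\cH^t) = \{ s \cap t : s \in \cL(\cH),\ \dim s \geq d' \}$, and I would show $s \mapsto s \cap t$ is a poset isomorphism from $\{ s \in \cL(\cH) : \dim s \geq d'\}$ onto $\cL(\cH^t)$. It visibly preserves reverse inclusion and preserves rank, since $s \cap t$ has rank $\dim t - \dim(s \cap t) = d - \dim s$ in $\cL(\cH^t)$. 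For injectivity I would use that if $s_1 \cap t = s_2 \cap t =: u \neq \eset$ then $s_3 := s_1 \cap s_2 \in \cL(\cH)$ satisfies $s_3 \cap t = u$, forcing $\dim s_3 = \dim s_1$ by transversality and hence $s_3 = s_1$, and likewise $s_3 = s_2$.

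\emph{Preservation of ideal general position.} For generic $t$ no line $Q_iQ_j$ is orthogonal to $t$, so all projections are distinct and $k = n$. Orthogonal projection onto $t$ acts at infinity as the linear projection of $h_\infty \cong \bbP^{d-1}$, away from the center $C$ of ideal points of directions orthogonal to $t$ (a generic $\bbP^{d'-1}$), onto the ideal hyperplane of $t$, carrying each $p_{ij}$ to the ideal point of $\bar Q_i \bar Q_j$. Since ideal general position upstairs gives $M(\Pinf) \cong T_d(G(K_n))$ of rank $d$ (as $n > d$), and $C$ is generic relative to the fixed set $\Pinf$, I would invoke the matroid principle that projection from a generic $(d'-1)$-dimensional center performs $d'$-fold ordinary truncation; with idempotence $T_{d-d'}(T_d(G(K_n))) = T_{d-d'}(G(K_n))$, the ideal points of the lines $\bar Q_i\bar Q_j$ realize $T_{d-d'}(G(K_n))$, which is exactly ideal general position of $\bar Q_1,\dots,\bar Q_n$ in $t \cong \bbE^{d-d'}$.

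\emph{Main obstacle.} The first two parts are a direct distance computation and a routine transversal cross-section argument. The real difficulty is the third: one must see that genericity of $t$ both forces $k = n$ and makes the projection center generic against $\Pinf$, and then invoke the \emph{ordinary} truncation produced by generic projection rather than the Dilworth truncation used in Proposition \ref{Ppar}. Pinning down this projection--truncation fact (or, equivalently, verifying through Proposition \ref{Ppar} that a generic $t$ creates no parallelism among the $\bar Q_i$) is where the argument needs the most care.
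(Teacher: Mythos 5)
Your first two parts are correct and essentially reproduce the paper's argument: the identity comes from the same Pythagorean computation $d(P,Q_i)^2 = d(P,\bar Q_{[i]})^2 + \eta_t(i)$ (the paper does exactly this subtraction, observing via the resulting formula $\psi_{ij}(P) = \psi^t_{ij}(P) + \eta_t(i) - \eta_t(j)$ that the induced gain is $\phi^{\eta_t}$ and that $e$ survives iff $\proj_t Q_i \neq \proj_t Q_j$), and your cross-section argument is a carefully fleshed-out version of the paper's one-line appeal to the modular law (your injectivity argument via $s_1 \cap s_2$ is a nice explicit supplement).

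The third part, however, has a genuine gap: you prove preservation of ideal general position only for \emph{generic} $t$, where no line $Q_iQ_j$ is orthogonal to $t$ and hence $k = n$. But the theorem asserts it for an \emph{arbitrary} affine flat $t$ --- note that the statement sets $k = |\pi(t)|$, explicitly allowing projections to coalesce --- and this generality is precisely what is used later: Theorem \ref{Tinduced} applies it with $t = s \in \cL(\cH)$, a flat of the arrangement itself, where by Lemma \ref{Lpij} and Corollary \ref{Pproj} the lines $Q_iQ_j$ with $[i]=[j]$ in $\pi(E(s))$ \emph{are} orthogonal to $s$ and $k = c(E(s)) < n$. In that situation your projection center contains the corresponding points $p_{ij}$, so projection is not a generic one and the truncation principle you invoke fails; moreover the target matroid is then $T_{\dim t}(G(K_k))$ on the coalesced vertex set, not $T_{d-d'}(G(K_n))$, a phenomenon your argument cannot see. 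The paper's proof avoids genericity of $t$ altogether by arguing contrapositively through Proposition \ref{Ppar}: if the $\bar Q_i$ fail IGP they admit a parallelism, i.e.\ flats $\bar f, \bar g$ spanned by projected reference points, neither a point, with $\eset \neq \bar f_\bbP \cap \bar g_\bbP \subseteq t_\bbP \cap h_\infty$; taking an orthogonal complement $t'$ of $t$, forming $\hat f = \bar f \times t'$ and $\hat g = \bar g \times t'$, and letting $f, g$ be the affine hulls of the reference points lying in $\hat f, \hat g$, one obtains disjoint nonpoint flats with $f_\bbP \cap g_\bbP \supseteq \bar f_\bbP \cap \bar g_\bbP \neq \eset$ --- a parallelism among $Q_1, \hdots, Q_n$, contradicting their IGP. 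You gesture at Proposition \ref{Ppar} as an alternative but again only for generic $t$; to close the gap you should replace your truncation argument with this lifting-of-parallelisms argument, which works for every $t$.
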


\begin{lem} \mylabel{Lprojeq}
For a hyperplane $h(e) \in \cH$, corresponding to an edge $e$ with $V(e) = \{i,j \}$, to have $h(e) \supseteq t$ or $h(e) \cap t = \eset$ it is necessary and sufficient that $\proj_t Q_i = \proj_t Q_j$.
\end{lem}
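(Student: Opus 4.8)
The plan is to reduce both of the stated conditions to a single orthogonality statement: that the direction vector $Q_j - Q_i$ is perpendicular to the direction space of the flat $t$. Everything then follows by identifying each side of the ``if and only if'' with this orthogonality.

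First I would recall the elementary trichotomy for a hyperplane and an affine flat. Write $N = Q_j - Q_i$. Since $i$ and $j$ are adjacent, $Q_i \neq Q_j$, so $N \neq 0$, and because $h(e) \perp Q_iQ_j$ the vector $N$ is a normal of $h(e)$; thus $h(e)$ is a level set of the affine function $P \mapsto \langle N, P\rangle$. Let $\vec t$ denote the direction (linear) subspace of $t$. The restriction of this function to $t$ is either constant or surjective onto $\bbR$. It is constant exactly when $N \perp \vec t$, and in that case $t$ is either contained in $h(e)$ or disjoint from it; when $N \not\perp \vec t$ the function is nonconstant on $t$, so $h(e)\cap t$ is a nonempty proper subflat of $t$, i.e. $h(e)$ properly crosses $t$. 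Hence the disjunction ``$h(e) \supseteq t$ or $h(e) \cap t = \eset$'' holds if and only if $N \perp \vec t$.

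Next I would match the projection condition to the same orthogonality. Writing $\Pi$ for the orthogonal projection of $\bbE^d$ onto the linear subspace $\vec t$, the orthogonal projection onto the affine flat $t = P_0 + \vec t$ satisfies $\proj_t P = P_0 + \Pi(P - P_0)$, so $\proj_t Q_j - \proj_t Q_i = \Pi(Q_j - Q_i) = \Pi(N)$. Therefore $\proj_t Q_i = \proj_t Q_j$ if and only if $\Pi(N) = 0$, that is, if and only if $N \perp \vec t$. Combining this with the previous paragraph yields the lemma.

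I do not expect a genuine obstacle here; the argument is purely metric and, notably, uses none of the genericity or ideal-general-position hypotheses of Theorem \ref{Tsect}, holding for every placement with $Q_i \neq Q_j$. The only points needing care are the affine-versus-linear bookkeeping---that the difference of the projections of two points is the linear projection of the difference vector---and the observation that $N \neq 0$, which is guaranteed by the standing convention that reference points joined by an edge are distinct.
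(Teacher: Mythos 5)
Your proof is correct, and it takes a genuinely more direct route than the paper's, though both ultimately rest on the same elementary fact. The paper obtains the lemma as a byproduct of the squared-distance calculation it needs anyway for Theorem \ref{Tsect}: writing $Q_i^t = \proj_t Q_i$ and applying the Pythagorean theorem to a point $P \in t$, it derives the identity $\psi_{ij}(P) = \psi^t_{ij}(P) + d(Q_i,Q_i^t)^2 - d(Q_j,Q_j^t)^2$, so that $\psi_{ij}$ restricted to $t$ differs by a constant from the Pythagorean coordinate $\psi^t_{ij}$ along $Q_i^tQ_j^t$; the disjunction ``contain or miss'' is equivalent to constancy of $\psi_{ij}$ on $t$, hence of $\psi^t_{ij}$, and such constancy fails exactly when $Q_i^t \neq Q_j^t$. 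You instead linearize from the outset: $h(e)$ is a level set of the affine functional $P \mapsto \langle N, P \rangle$ with normal $N = Q_j - Q_i \neq 0$, constancy of that functional on $t$ is equivalent to $N \perp \vec t$, and the bookkeeping identity $\proj_t Q_j - \proj_t Q_i = \Pi(N)$ (with $\Pi$ the linear projection onto the direction space of $t$) converts the orthogonality into equality of projections. Each step checks out, including the surjectivity of a nonconstant affine functional on a flat, which gives the proper-crossing case. What your version buys is self-containment and transparency --- in particular your observation that no genericity or ideal-general-position hypothesis is involved is accurate, and it holds for the paper's argument too. What the paper's version buys is economy at the level of Theorem \ref{Tsect}: the same distance-squared identity simultaneously shows that the correct gain for $e$ in the induced arrangement is $\phi^{\eta_t}(e;i,j)$ with $\eta_t(i) = d(Q_i,t)^2$, which is precisely what is needed to identify the Pythagorean gain graph of $\cH^t$ as $\Phi^{\eta_t}/\pi(t)$, so the lemma falls out of a computation the theorem requires anyway.
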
 

(This is a version of Corollary \ref{Pproj} suitable for arbitrary affine flats.)

\begin{proof}
Let us compare a point $P \in t$ to the trace in $t$ of $h(e)$, where $V(e) = \{i,j\}$.  We write $Q_i^t = \proj_t Q_i$.  We have
\begin{align*}
d(Q_i,P)^2 &= d(Q_i,Q_i^t)^2 + d(Q_i^t,P)^2, \\
d(Q_j,P)^2 &= d(Q_j,Q_j^t)^2 + d(Q_j^t,P)^2,
\end{align*}
so by subtracting,
$$
\psi_{ij}(P) = \psi^t_{ij}(P) + d(Q_i,Q_i^t)^2 - d(Q_j,Q_j^t)^2,
$$
where $\psi_{ij}^t$ denotes the Pythagorean coordinate along $Q_i^tQ_j^t$.  This expression, together with the fact that $\psi_{ij}(P) = \phi(e;i,j)$, shows that the correct gain for $e$ in the Pythagorean gain graph $\Psi(\cH^t)$ is $\phi^{\eta_t}(e;i,j)$.  

The calculation also proves the lemma, since $h(e) \nsupseteq t$ and $h(e) \cap t \neq \eset$ if and only if $t$ contains points that take different values of the Pythagorean coordinate $\psi_{ij}$, hence of $\psi^t_{ij}$; but such points exist with respect to $\psi^t_{ij}$ if and only if $Q_i^t \neq Q_j^t$.

The coalescence of reference points under projection shows that each block of $\pi(t)$ must be identified to a vertex in $\Psi(\cH^t)$ but there should be no other identifications of reference points.  The lemma shows us which edges of $\Phi$ no longer correspond to hyperplanes in $\cH^t$ so should be deleted.  Thus $\Psi(\cH^t)$ is precisely the gain graph $\Phi^{\eta_t}/\pi(t)$.

The assertion about generic $t$ follows because a flat $s \in \cL(\cH)$ intersects $t$ in the smallest dimension permitted by the modular law: in dimension $\dim s - \codim t$ if that is nonnegative; otherwise the intersection is empty.

Suppose $\bar Q_1, \hdots, \bar Q_k$ fail to have ideal general position.  Then they have a parallelism: flats $\bar f$ and $\bar g$, generated by $\bar Q_i$'s, neither one a point, such that $t\atinf \supseteq \bar f_\bbP \cap \bar g_\bbP \neq \eset$.
Let $t'$ be an orthogonal complement of $t$: a flat such that $t' \perp t$, $t \cap t'$ is a point, and $\dim t + \dim t' = d$.  We can regard $\bbE^d$ as $t \times t'$.  Let $\hat f = \bar f \times t'$ and $\hat g = \bar g \times t'$.  Since $\bar f \cap \bar g = \eset$, $\hat f \cap \hat g = \eset$ as well.

Now let $f = \aff\{Q_i : Q_i \in \hat f\}$ and let $g$ be similar.  Neither $f$ nor $g$ is a point.  They are disjoint because $\hat f \cap \hat g = \eset$, but they meet at infinity because $f_\bbP \cap g_\bbP \supseteq \bar f_\bbP \cap \bar g_\bbP$.  Therefore there is a parallelism within $Q_1, \hdots, Q_n$.

It follows that ideal general position of $Q_1, \hdots, Q_n$ implies ideal general position of $\bar Q_1, \hdots,$ $\bar Q_k$.
\end{proof}

We are especially interested in what happens when we look inside a flat of the arrangement itself.
If $s$ is a flat of $\cH$, we write $\cH(s) = \{ h \in \cH : h \supseteq s \}$ and 
$E(s) = \{ e \in E(\Phi) : h(e) \supseteq s \}$.  $E(s)$ is necessarily balanced, by Lemma \ref{L3}.

\begin{thm} \mylabel{Tinduced}
Take a fixed real, additive gain graph $\Phi$ of order $n$ and points 
$Q_1, \hdots, Q_n \in \bbE^d$ in general position with respect to $\Phi$.  
Let $\cH = \cH(\Phi; Q_1, \hdots, Q_n)$.  For $s \in \cL(\cH)$ which is not a point, let 
$\bar Q_1, \hdots, \bar Q_k$ be the distinct points $\proj_s Q_i$, where 
$k = c(E(s))$.  Then
$$
\cH^s = \cH(\Phi^{\eta_s}/E(s); \bar Q_1, \hdots, \bar Q_k).
$$
and $\bar Q_1, \hdots, \bar Q_k$ have general position in $s$ with respect to $\Phi^{\eta_s}/E(s)$.

Furthermore, if $Q_1, \hdots, Q_n$ have ideal general position, then $\bar Q_1, \hdots, \bar Q_k$ also have it.
\end{thm}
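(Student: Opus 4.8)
The plan is to deduce Theorem \ref{Tinduced} from the general cross-section result Theorem \ref{Tsect} applied to the affine flat $t=s$; the only additional work is to recognize the collapsing $\Phi^{\eta_s}/\pi(s)$ as the contraction $\Phi^{\eta_s}/E(s)$ and to identify the intersection semilattice of the induced arrangement with an order filter of $\cL(\cH)$.

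First I would invoke Theorem \ref{Tsect} with $t=s$, which gives $\cH^s=\cH(\Phi^{\eta_s}/\pi(s);\bar Q_1,\hdots,\bar Q_k)$, where $\pi(s)$ is the partition of $[n]$ by equality of projections $\proj_s Q_i$ and the $\bar Q_\ell$ are the distinct projection points. The key first step is that $\pi(s)=\pi(E(s))$: this is exactly Corollary \ref{Pproj}, which gives $\proj_s Q_i=\proj_s Q_j$ if and only if $[i]=[j]$ in $\pi(E(s))$, so in particular the number of distinct projections equals $c(E(s))=k$. Next I would check that collapsing by $\pi(s)$ and contracting $E(s)$ produce the same gain graph. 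For an edge $e\in E(s)$ the identity from the proof of Lemma \ref{Lprojeq} gives $\phi(e;i,j)=d(Q_i,s)^2-d(Q_j,s)^2$ (since $h(e)\supseteq s$ forces the relative coordinate $\psi^s_{ij}$ to vanish on $s$), whence $\phi^{\eta_s}(e;i,j)=0$; thus the edges of $E(s)$ become zero-gain loops, discarded by both operations. An edge $e\notin E(s)$ whose endpoints are identified has $h(e)\cap s=\eset$ by Lemma \ref{Lprojeq}, so its switched gain is nonzero and it too is discarded by both. The surviving edges and their switched gains agree with the induced Pythagorean gain graph computed in Lemma \ref{Lprojeq}. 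This establishes the displayed identity.

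For the assertion that $\bar Q_1,\hdots,\bar Q_k$ have general position in $s$ with respect to $\Phi^{\eta_s}/E(s)$ I would argue matroidally. The flats of $\cH^s$ are precisely the flats of $\cH$ contained in $s$, so $\cL(\cH^s)$ is the order filter $\{s'\in\cL(\cH):s'\subseteq s\}$, with ranks shifted down by $\rk s$. Since $\bQ$ has general position, $\cL(\cH)\cong[\Latb\Phi]_0^d$ and $s$ corresponds to the balanced flat $F=\mathrm{cl}_{L_0}(E(s))$ of rank $\rk E(s)=\codim s$; hence $\cL(\cH^s)$ is identified with the filter of balanced flats containing $F$ in $[\Latb\Phi]_0^d$. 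Now I would use the standard fact (\cite[Part II]{BG}) that contracting a balanced set $S$ induces an isomorphism between the balanced flats of $L_0(\Phi)$ containing $\mathrm{cl}(S)$ and the balanced flats of $L_0(\Phi/S)$, with all ranks dropping by $\rk S$; truncation at rank $d$ above $F$ therefore becomes truncation at rank $d-\rk E(s)=\dim s$. Because balance of circles is switching-invariant, $L_0(\Phi^{\eta_s}/E(s))=L_0(\Phi/E(s))$ and their balanced-flat semilattices coincide. Assembling these identifications yields $\cL(\cH^s)\cong[\Latb(\Phi^{\eta_s}/E(s))]_0^{\dim s}$, which is general position of $\bar Q_1,\hdots,\bar Q_k$ in $s$. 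The furthermore clause on ideal general position is then immediate, being the last assertion of Theorem \ref{Tsect} specialized to $t=s$.

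The main obstacle I expect is the matroid bookkeeping in the general-position step: matching the filter above $F$ in the truncated balanced-flat semilattice of $L_0(\Phi)$ with the truncated balanced-flat semilattice of the contraction, getting the truncation level $\dim s$ exactly right, and verifying that switching by $\eta_s$ leaves the balanced matroid $L_0$, hence $\Latb$, unchanged. A secondary delicate point is the loop bookkeeping when identifying $\Phi^{\eta_s}/\pi(s)$ with $\Phi^{\eta_s}/E(s)$, since the two operations dispose of loops by different rules and one must confirm they agree both on the zero-gain loops (the contracted edges of $E(s)$) and on the nonzero-gain loops (the edges meeting $s$ emptily).
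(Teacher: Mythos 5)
Your proposal is correct and takes essentially the same route as the paper's own proof: both reduce the displayed identity to Theorem \ref{Tsect} via $\pi(s)=\pi(E(s))$ from Corollary \ref{Pproj}, and both obtain general position of $\bar Q_1,\hdots,\bar Q_k$ by chaining $\cL(\cH)\cong[\Latb\Phi]_0^d$, switching-invariance of $\Latb$, the filter identification $\cL(\cH^s)\cong\cL(\cH)/s=\{t\in\cL(\cH):t\geq s\}$, and the contraction isomorphism $\Latb(\Phi^{\eta_s}/E(s))\cong(\Latb\Phi^{\eta_s})/E(s)$ of \cite[Prop.\ II.2.7]{BG}, with the ideal-general-position clause read off from Theorem \ref{Tsect}. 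Your explicit loop bookkeeping showing that the collapsing $\Phi^{\eta_s}/\pi(s)$ and the contraction $\Phi^{\eta_s}/E(s)$ coincide (zero-gain loops arise only from $E(s)$, all other loops have nonzero switched gain) is a correct elaboration of a step the paper leaves implicit, not a departure from its method.
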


\begin{proof}
Write $S = E(s)$.  We know that $\pi(s) = \pi(S)$, hence $k = c(S)$, due to Corollary \ref{Pproj}.
The bulk of the theorem follows from Theorem \ref{Tsect}.  
What remains to prove is that $\bar Q_1, \hdots, \bar Q_k$ have general position.

By general position of $Q_1, \hdots, Q_n$ with respect to $\Phi$, $\cL(\cH) \cong [\Latb \Phi]_0^d$.  Switching does not change $\Latb \Phi$.  Thus $\cL(\cH) \cong [\Latb \Phi^{\eta_s}]_0^d$.  We also know that $\cL(\cH^s) \cong \cL(\cH)/s = \{ t \in \cL(\cH) : t \geq s \}$; and this is isomorphic to $[\Latb \Phi]_0^d/E(s)$ by the isomorphism of $\cL(\cH)$ with $\Latb \Phi$.  One further fact is needed:  $\Latb(\Phi^{\eta_s}/S) \cong (\Latb \Phi^{\eta_s})/S$ for a balanced edge set $S$ \cite[Prop.\ II.2.7]{BG}.  Tracing through these isomorphisms, we conclude that $\cL(\cH^s) \cong [\Latb(\Phi^{\eta_s}/E(s))]_0^{\dim s}$.  This is the definition of general position of $\bar Q_1, \hdots, \bar Q_k$.
\end{proof}

We can now analyse the oversight in Good and Tideman's Theorem 2 \cite{GT}.  It asserts (in our terminology) that, if $(K_n,0)$ is the complete graph with all-zero gain function $\phi$ and $\cH = \cH((K_n,0);\bQ)$, then generically $\cH^s$ has the form $\cH((K_{n-q+1},0);\bQ^s)$ for some set of reference points $Q_1^s,\hdots,Q_{n-q+1}^s \in s$ if $s \in \cL(\cH)$ is such that $E(s)$ consists of just one nonisolated component that has $q$ vertices (thus $\dim s = d-q+1$).  This kind of flat is what Good and Tideman call a ``circumflat''.  By Theorem \ref{Tinduced} we know that $\cL(\cH^s) \cong [\Latb (K_n,0)^\eta/E(s)]_0^{\dim s}$ where $\eta(i) = d(Q_i,s)^2$; that is, $\cL(\cH^s)$ is an arrangement of perpendiculars with respect to reference points in $s$ and some gain graph.  What we do not know is that reference points exist \emph{within $s$} with respect to which the gain graph can be chosen to have all-zero gains (that is, the hyperplanes are to be perpendicular bisectors); and in fact they may not exist, as Example \ref{Xnbisect} shows.  But $(K_n,0)^\eta/E(s)$ is balanced (in fact, $\cH^s$ is a power-diagram arrangement with centers $bQ$ and spheres of radii $d(Q_i,s)$) and, by Corollary \ref{T3}, generically the face and flat numbers of $\cH(\Phi;\bQ)$ for balanced and complete $\Phi$ are the same as for perpendicular bisectors.  So Good and Tideman's numerical conclusions are correct after all and are valid, moreover, for every flat of $\cH$.

\begin{exam} \mylabel{Xnbisect}
In Figure \ref{Fnbisect} are a planar arrangement of bisectors and a circumflat whose induced arrangement cannot be constructed as bisectors from reference points within the circumflat.
\placefigure{nbisect}
\begin{figure} \mylabel{Fnbisect}
\vbox to 5truein{}
\begin{center}
\includegraphics{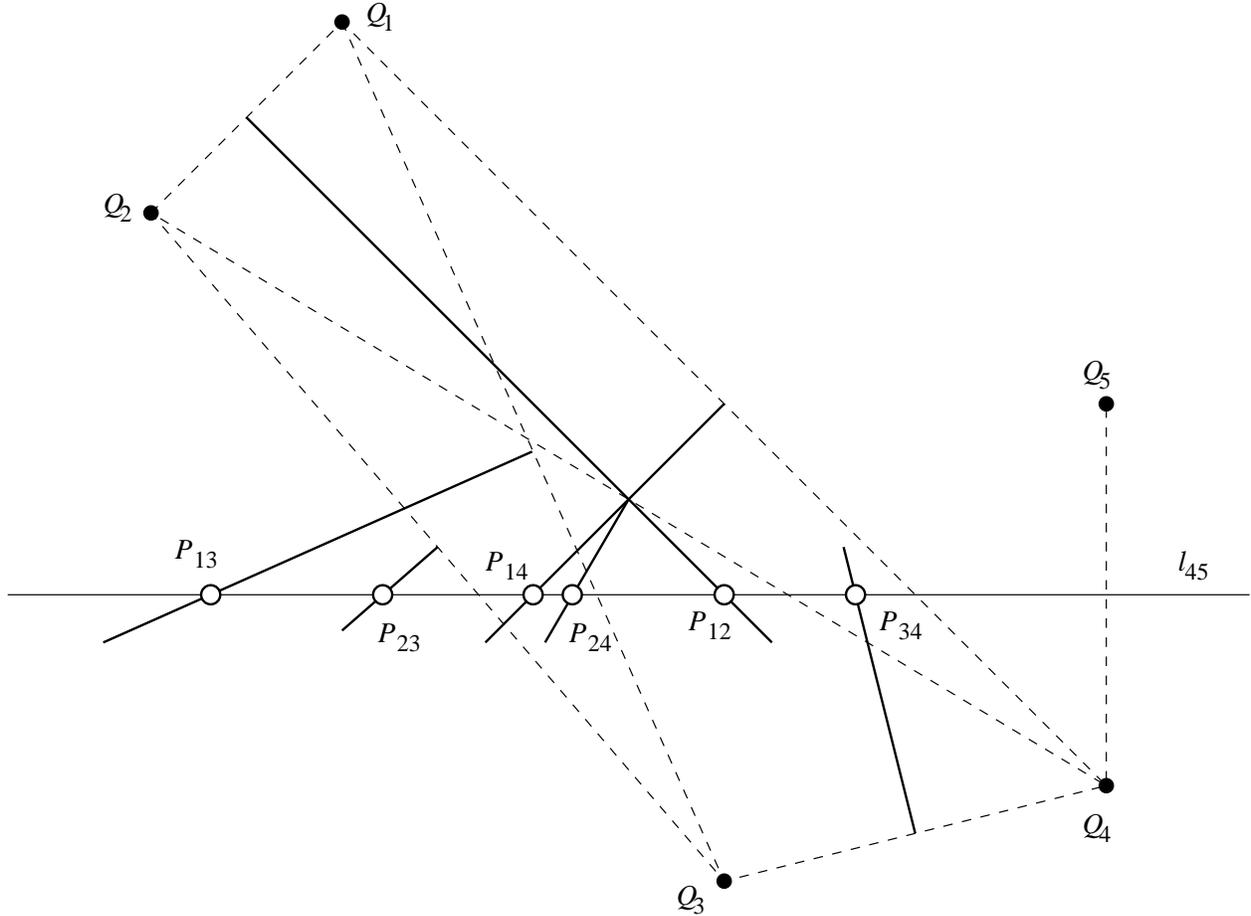}
\end{center}
\caption{A counterexample to the Good--Tideman proof.  The points $P_{ij}$ are not centrally symmetric.}
\end{figure}

First, observe that, if six points in a line are the bisectors of the six segments produced by four reference points in the line, then the bisection points are centrally symmetric.

To construct the example, take generic points $Q_1,\hdots,Q_5$ in the plane.  Let $l_{ij}$ be the perpendicular bisector of $[Q_i,Q_j]$ and for the circumflat take $s = l_{45}$.  The induced arrangement $\cH^s$ consists of one point $P_{ij}$ for each $l_{ij}$ with $i < j \leq 4$; we may ignore $l_{i5}$ since it and $l_{i4}$ meet $s$ in the same point.  Thus we have an arrangement of 6 points in $s$ produced by 6 lines $l_{ij}$ that are the perpendicular bisectors of all the segments generated by $Q_1,\hdots,Q_4$.  In relation to these bisectors, $s$ is an arbitrary line because we can determine it at will by choosing $Q_5$.  It is easy to find $s$ so that the 6 points in $s$ are not centrally symmetric; in fact, that situation is generic. Therefore, $\cH^s$ cannot be produced by reference points in $s$.
\end{exam}

\subsection{Cross-sectional representation} \mylabel{xrep}
Is every Pythagorean arrangement a cross-section of one in which the reference points are an affine basis?  Yes!  

\begin{thm} \mylabel{Txrep}
Given $\Phi$ of order $n$, $d < n-1$, and $Q_1,\dots,Q_n$ that affinely span $\bbE^d$, it is possible to find affinely independent reference points $Q'_1,\dots,Q'_{n} \in \bbE^{n-1}$ such that $\cH(\Phi;\bQ) = \cH(\Phi;\bQ')^t$ for some $d$-flat $t$ in $\bbE^{n-1}$.
\end{thm}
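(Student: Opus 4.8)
The plan is to realize $\cH(\Phi;\bQ)$ directly as a cross-section through Theorem \ref{Tsect}, by lifting the given points straight up out of a copy of $\bbE^d$ sitting inside $\bbE^{n-1}$. Write $\bbE^{n-1} = \bbE^d \times \bbE^m$ with $m = n-1-d \geq 1$ (this is where the hypothesis $d < n-1$ enters), put $t = \bbE^d \times \{0\}$, and set $Q_i' = (Q_i, v_i)$ for vectors $v_i \in \bbE^m$ still to be chosen. Then $\proj_t Q_i' = Q_i$ and $d(Q_i',t)^2 = \|v_i\|^2$, so $\eta_t(i) = \|v_i\|^2$. The key observation is that if I insist on $\|v_1\| = \cdots = \|v_n\| = r$ for a single $r > 0$, then $\eta_t$ is constant and hence $\Phi^{\eta_t} = \Phi$, because switching by a constant function leaves every gain unchanged. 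Since the projections $\proj_t Q_i' = Q_i$ are distinct, $\pi(t)$ is the discrete partition and the collapse $\Phi^{\eta_t}/\pi(t)$ does nothing (there are no loops to discard). Theorem \ref{Tsect} then yields $\cH(\Phi;\bQ')^t = \cH(\Phi^{\eta_t}/\pi(t); \bar Q_1, \ldots, \bar Q_n) = \cH(\Phi;\bQ)$, which is exactly the claim, provided the $Q_i'$ come out affinely independent.

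So the whole problem reduces to choosing $v_1, \ldots, v_n$ on the sphere $S_r = \{ v \in \bbE^m : \|v\| = r \}$ so that $Q_1', \ldots, Q_n'$ are affinely independent in $\bbE^{n-1}$; this is the step I expect to be the real work, since affine independence wants the $v_i$ to break every affine dependency among the $Q_i$, while the equal-length constraint is precisely what confines them to a single sphere. I would build the lift one point at a time. Relabel so that $Q_1, \ldots, Q_{d+1}$ are affinely independent (possible because the $Q_i$ affinely span $\bbE^d$); choose $v_1, \ldots, v_{d+1} \in S_r$ arbitrarily, so that $Q_1', \ldots, Q_{d+1}'$ are already affinely independent because their horizontal parts are. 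Then, for $i = d+2, \ldots, n$ in turn, suppose $Q_1', \ldots, Q_{i-1}'$ have been made affinely independent, spanning a flat $F = \aff(Q_1', \ldots, Q_{i-1}')$ of dimension $i-2$. I must pick $v_i \in S_r$ with $Q_i' = (Q_i, v_i) \notin F$.

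The forbidden choices form the set $\{ v \in \bbE^m : (Q_i, v) \in F \}$, which is the fiber of $F$ over the horizontal coordinate $Q_i$. Because $i-1 \geq d+1$, the horizontal projection of $F$ already contains $\aff(Q_1, \ldots, Q_{d+1}) = \bbE^d$, so that projection is onto and the fiber is an affine subspace of $\bbE^m$ of dimension $(i-2)-d \leq m-1$. A full $(m-1)$-sphere in $\bbE^m$ affinely spans $\bbE^m$ and so cannot be contained in any affine subspace of dimension $\leq m-1$; hence $S_r$ is not swallowed by the forbidden set, and a legitimate $v_i$ exists. (When $m=1$ the sphere is the two points $\pm r$ and the forbidden set is a single point, so one of the two still works.)

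Iterating up to $i=n$ produces affinely independent $Q_1', \ldots, Q_n'$ of the required form, all equidistant from $t$ and projecting to $Q_1, \ldots, Q_n$, and combining this with the first paragraph completes the argument. The only genuine obstacle is this reconciliation of affine independence with the equal-distance requirement; everything else is a direct appeal to Theorem \ref{Tsect} together with the triviality of switching by a constant.
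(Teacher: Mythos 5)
Your proposal is correct and is essentially the paper's own proof: embed $\bbE^d$ as a flat $t \subseteq \bbE^{n-1}$, lift each $Q_i$ orthogonally to a fixed distance $r$ so that $\eta_t$ is constant, note that switching by a constant leaves $\Phi$ unchanged, and invoke Theorem \ref{Tsect}. The only difference is at the affine-independence step, where the paper simply asserts that ``generically different directions'' work (with an ad hoc instruction when $d = n-2$), while you prove existence by an explicit one-point-at-a-time dimension count against the sphere $S_r$ --- an argument that is airtight and handles the $m=1$ case uniformly rather than as a special case.
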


\begin{proof}
Embed $\bbE^{d}$ as a flat $t \subseteq \bbE^{n-1}$ and choose $Q'_i$ that are all at some fixed positive distance $c$ from the corresponding points $Q_i$ but in generically different directions orthogonal to $t$.  Obviously the $Q'_i$ will be affinely independent.  (If $d=n-2$, there are only two directions orthogonal to $t$ so special instructions are necessary.  The reference points have only one minimal dependent set, say $Q_ 1,\ldots,Q_k$.  If $Q_1,\ldots,Q_k$ are not all shifted in the same direction, then $Q'_1,\ldots,Q'_n$ will be affinely independent.) Now $Q_i = \proj_t Q'_i$ and therefore $\cH(\Phi;\bQ')^t = \cH(\Phi^\eta;\bQ)$, where $\eta(i) = d(Q'_i,Q_i) = c$, by Theorem \ref{Tsect}.  Thus $\Phi^\eta = \Phi$.
\end{proof}

Conversely, let us begin with affinely independent reference points in dimension $n-1$.  Then $\cL(\cH(\Phi;\bQ)) \cong \Latb \Phi$ by Theorem \ref{T1} and Proposition \ref{Pfree}.  If $t$ is a generic affine flat of dimension $d$, then $\cL(\cH(\Phi;\bQ)^t) \cong [\Latb \Phi]_0^d$ by Theorem \ref{Tsect}.  This viewpoint gives another explanation of the generic fact that $\cH(\Phi;\bQ)$ in lower dimensions has intersection semilattice $[\Latb \Phi]_0^d$, as Good and Tideman observed for the arrangement of all perpendicular bisectors.  Possibly it could be the basis for an alternative proof of Theorem \ref{T1}.


\section{Non-Pythagorean perpendiculars} \mylabel{nonpyth}

Now let us consider modified Pythagorean description rules, $(\alpha, \Phi)$
where $\alpha \neq 0$.  Recall that the arrangement $\cH(\alpha, \Phi; \bQ)$,
described by $(\alpha, \Phi)$ and based on $\bQ$, has for each edge $e$ (with
endpoints $i$ and $j$) a hyperplane $h(e)$ perpendicular to $Q_iQ_j$ with
Pythagorean coordinate
\begin{equation*}
\psi_{ij}(h(e)) = \phi(e;i,j) d(Q_i,Q_j)^\alpha .
\end{equation*}
Let $Z$ be the set of edges with gain $\phi(e) = 0$.

\begin{thm} \mylabel{T2}
Let $n > d \geq 1$.  Choose a real number $\alpha \neq 0$ and a real, additive
gain graph $\Phi$ on vertices $\{ 1, 2, \hdots, n\}$.  Supposing $\bQ = (Q_1, \hdots, Q_n)$ is generic, then the intersection of a subset $\cS \subseteq \cH(\alpha, \Phi; \bQ)$ is void except that, when $\cS$ corresponds to an edge set $S$ having $n-i$ connected components with $i \leq d$ and in which every circle is contained in $Z$ (that is, the parts of $\cS$ corresponding to circles in $\Phi$ consist of bisectors), then the intersection is a flat of dimension $d-i$.  

Two subsets $\cS_1$ and $\cS_2$ with nonempty intersections have the same intersection if and only if $S_1 \setminus Z = S_2 \setminus Z$ and the connected components of $S_1 \cap Z$ and $S_2 \cap Z$ partition the vertices in the same way.

\end{thm}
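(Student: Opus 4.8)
The plan is to pass to the Pythagorean gain graph $\Psi=\Psi_\bQ$ of $\cH(\alpha,\Phi;\bQ)$, whose gains are $\psi(e;i,j)=d(Q_i,Q_j)^\alpha\,\phi(e;i,j)$, so that $\cH(\alpha,\Phi;\bQ)=\cH(\Psi;\bQ)$ and Lemmas \ref{L2a}--\ref{L5} all apply provided ``balanced'' is read in $\Psi$. Note that $e\in Z$ forces $\psi(e)=d(Q_i,Q_j)^\alpha\cdot 0=0$, so the $Z$-edges are exactly the bisectors. The one feature that blocks a direct appeal to Theorem \ref{T1} is that $\Psi$ now \emph{depends on} $\bQ$, whereas Theorem \ref{T1} fixes the gain graph before choosing generic reference points; so I cannot quote it verbatim and must re-establish the genericity by hand.

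The heart of the argument, and the step I expect to be the main obstacle, is the characterization of balance in $\Psi$: \emph{for generic $\bQ$, a circle $C$ of $\Phi$ is balanced in $\Psi$ if and only if $C\subseteq Z$.} If $C\subseteq Z$ all its $\Psi$-gains vanish, so it is balanced. Conversely, if some edge of $C$ has $\phi(e)\neq 0$, then balance of $C$ reads $\sum_{e\in C}\pm d(Q_i,Q_j)^\alpha\phi(e)=0$, a real-analytic equation in $\bQ$ that is not identically zero: because $\alpha\neq0$, the left side is a nonconstant function of the edge lengths of $C$, which may be varied independently. Its zero set is nowhere dense, and intersecting the complements over the finitely many circles $C\not\subseteq Z$ leaves an open dense set of $\bQ$. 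I would combine this with the open dense conditions of ideal general position (Lemma \ref{L4}) and of emptiness of $\bigcap\cS$ for every forest of more than $d$ edges (the deformation of Lemma \ref{L4c}, which is unaffected by the $\bQ$-dependence of the gains, since it moves only a single reference point to destroy a point-intersection).

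Granting these generic hypotheses, the first assertion follows exactly as in Lemma \ref{L5}. If $S$ contains a circle $C\not\subseteq Z$, then $C$ is unbalanced in $\Psi$, so Lemma \ref{L2a} gives $\bigcap\cC=\eset$ and hence $\bigcap\cS=\eset$. Otherwise every circle of $S$ lies in $Z$, hence is balanced, so repeated use of Lemma \ref{L2a} (each deletion removing an edge of a balanced circle, and so preserving both the intersection and the component count) reduces $\cS$ to a spanning forest $S_0$ with $|S_0|=n-c(S)=:i$ and $\bigcap\cS=\bigcap\cS_0$. When $i\le d$, Lemma \ref{L4b} yields $\dim\bigcap\cS=d-i$; when $i>d$, the forest genericity assumption gives $\bigcap\cS=\eset$.

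For the second assertion, write $\bigcap\cS_1=\bigcap\cS_2=s$ and $E(s)=\{e:h(e)\supseteq s\}$. The backward implication is routine: if $S_1\setminus Z=S_2\setminus Z$ and $\pi(S_1\cap Z)=\pi(S_2\cap Z)$, the two $Z$-parts reduce by Lemma \ref{L2a} to spanning forests realizing the same partition, and the bisector intersection over each block $B$ is the locus equidistant from $\{Q_b:b\in B\}$, which depends only on $B$; intersecting with the common non-$Z$ hyperplanes gives $\bigcap\cS_1=\bigcap\cS_2$. The forward implication is the delicate point, because---unlike in Theorem \ref{T1}---the equality $\pi(S_1)=\pi(S_2)$ alone does \emph{not} force $\pi(S_1\cap Z)=\pi(S_2\cap Z)$, the non-$Z$ edges being rigid. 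I would argue combinatorially through $E(s)$. First, any $e\in E(s)\setminus Z$ must lie in $S_1$: otherwise $S_1\cup\{e\}$ keeps the intersection $s$ (as $h(e)\supseteq s$), hence also the component count, so $e$ closes a circle, which contains $e\notin Z$ and thus contradicts the first assertion; therefore $S_1\setminus Z=E(s)\setminus Z=S_2\setminus Z$. Second, enlarging $S_1$ by all of $E(s)\cap Z$ leaves the intersection at $s$ and so (by the first assertion) does not change the components, and no added $Z$-edge can merge two distinct $Z$-blocks of $S_1$ inside one component of $S_1$, for such a merge would close a circle through a non-$Z$ edge; hence $\pi(S_1\cap Z)=\pi(E(s)\cap Z)$, and by symmetry this equals $\pi(S_2\cap Z)$.
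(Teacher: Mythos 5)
Your proposal is correct and, at the top level, takes the paper's route: pass to the Pythagorean gain graph $\Psi$ of \eqref{Epsi}, arrange genericity so that the balanced circles of $\Psi$ are exactly the circles lying in $Z$, and then run the machinery behind Theorem \ref{T1}. The differences are in execution. Where the paper dismisses the circularity you identify (that $\Psi$ depends on $\bQ$) in a single sentence---``by varying $\bQ$ slightly we can ensure that $\bQ$ has ideal general position and the gains in $\Psi$ do not sum to zero on any circle except one whose edges all have gain zero''---and then cites Theorem \ref{T1} outright, you re-verify by hand that the three generic conditions (IGP, unbalance in $\Psi$ of circles not in $Z$, emptiness for forests of more than $d$ edges via the Lemma \ref{L4c} deformation, correctly noting it moves only a leaf's reference point) are open and dense and compatible with the moving gains; that is exactly the care the paper's one-liner presupposes, so this is a refinement rather than a detour. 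For the forward implication of the second half, the paper compares $S_1$ and $S_2$ directly, component by component: an edge of $S_{2i}\cap Z$ joining two blocks of $\pi(S_{1i}\cap Z)$, or an edge of $S_{2i}\setminus Z$ not in $S_{1i}$, would close a circle not contained in $Z$ inside $S_1\cup S_2$, contradicting the balance of the union that Theorem \ref{T1} forces. Your route through the closure $E(s)$---showing each $S_k$ satisfies $S_k\setminus Z=E(s)\setminus Z$ and $\pi(S_k\cap Z)=\pi(E(s)\cap Z)$, using the dimension formula of the first assertion to see that adjoining edges of $E(s)$ cannot change the component count---reaches the same two conclusions by a logically equivalent but manifestly symmetric mechanism. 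You also write out the backward implication (reduce the $Z$-parts to forests; the intersection of bisectors over a block $B$ is the equidistant locus of $\{Q_b : b\in B\}$, depending only on $B$), which the paper leaves implicit in the criterion of Theorem \ref{T1}.

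One caveat, shared with the paper's own proof but made visible by your attempted justification: the claim that the edge lengths of a circle ``may be varied independently'' is sound for $d\geq 2$ and circles of length at least $3$ (each such edge has its own endpoint pair, and a digon with equal gains on parallel edges is a repeated hyperplane, which must be excluded in any case), but it fails at $d=1$, where the complement of the diagonals in $(\bbE^1)^n$ is disconnected and a real-analytic function can vanish on an entire chamber. For instance, with $\alpha=1$ and a triangle all of whose gains equal $c\neq0$, the $\Psi$-balance equation reduces to $d_{12}+d_{23}-d_{13}=0$, which holds identically on the chamber $x_1<x_2<x_3$; so the generic unbalance you assert is literally false there. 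The theorem is unharmed: a circle of length $\geq 3$ forces $n-c(S)\geq 2>d$, so even when such a circle is balanced in $\Psi$ the Lemma \ref{L2a} reduction leaves a forest of more than $d$ edges and the forest-genericity condition alone yields a void intersection, while the only circles compatible with $n-c(S)\leq 1$ are digons, which are automatically unbalanced when their gains differ. To make your argument airtight for all $n>d\geq1$, either treat $d=1$ separately along these lines or observe that unbalance of non-$Z$ circles is needed only when $n-c(S)\leq d$.
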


\begin{proof}
Let $\Psi$ be the Pythagorean gain graph of $\cH(\alpha, \Phi; \bQ)$; that is,
\begin{equation}  \mylabel{Epsi}
\psi(e;i,j) = \phi(e;i,j) d(Q_i, Q_j)^\alpha .
\end{equation}
Choosing any $\bQ$, by varying $\bQ$ slightly we can ensure that $\bQ$ has ideal general position and the gains in $\Psi$ do not sum to zero on any circle except one whose edges all have gain zero.  (A more abstract way to obtain the same effect is to replace the nonzero gains in $\Phi$ by new real gain values that are linearly independent over the rationals.)  We may now apply Theorem \ref{T1} to $\cH(\Psi;\bQ)$.  This gives the first half of the theorem immediately.

For the second half we see from Theorem \ref{T1} that $S_1 \cup S_2$ must be balanced in $\Psi$ and, if the components of $S_1$ have vertex sets $V_1, \hdots, V_k$, then so do those of $S_2$.  We examine one $V_i$.  Let $S_{1i}$ and $S_{2i}$ be the parts of $S_1$ and $S_2$ with endpoints in $V_i$.

Suppose $S_{1i} \cap Z$ partitions $V_i$ into $B_1, \hdots, B_l$ with $l>1$.  If $S_{2i} \cap Z$ contains an edge $e$ that joins two of the $B_j$'s, say $B_1$ and $B_2$, then $S_{1i} \cup \{e\}$ contains a circle $C \ni e$.  $C \nsubseteq Z$ since $e$ lies in no circle in $(S_{1i} \cap Z) \cup \{e\}$.  Therefore $S_1 \cup S_2$ contains a circle $C$ that is unbalanced in $\Psi$.  This is impossible.  Hence $S_{2i} \cap Z$ does not join any of the $B_j$'s.  It follows that $S_{1i} \cap Z$ and $S_{2i} \cap Z$ partition $V_i$ in the same way; as this is true for every $i$, $S_1 \cap Z$ and $S_2 \cap Z$ partition $V$ in the same way.

Meanwhile, if $S_{2i} \setminus Z$ contains an edge $f \notin S_{1i} \setminus Z$, then $S_{1i} \cup \{f\}$ contains a circle $C \ni f$.  But $C$ is then unbalanced, which contradicts balance of $S_1 \cup S_2$.  Therefore $S_{2i} \setminus Z \subseteq S_{1i}$.  It follows that $S_{1i} \setminus Z = S_{2i} \setminus Z$.
\end{proof}

Comparing with the definition of the complete lift matroid $L_0(\Gamma,\cB)$ of a biased graph (\cite[\S II.3]{BG}; see the gain-graphic version in our Section \ref{gp}) and its balanced flats, we have a matroid-theoretic restatement of Theorem \ref{T2}.

\begin{cor} \mylabel{C2}
Let $\Phi$ be a real, additive gain graph on $n$ vertices, $\alpha$ a nonzero real number, and $n > d \geq 1$.  Let $\Gamma$ be the underlying graph of $\Phi$, let $Z = \{ e \in E: \phi(e) = 0 \}$, and let $\cZ$ be the set of circles in $Z$.  Then for generic $\bQ = (Q_1, \hdots, Q_n)$ in $\bbE^d$, $\cL(\cH(\alpha,\Phi;\bQ)) \cong \big[\Latb(\Gamma,\cZ)\big]_0^d$ and $\cL(\cH_\bbP(\alpha,\Phi;\bQ)) \cong T_{d+1}(\Lat L_0(\Gamma,\cZ))$, with flats of $\cH_\bbP$ in $h_\infty$ corresponding to matroid flats containing the extra point $e_0$. \hfill $\square$
\end{cor}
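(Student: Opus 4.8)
The plan is to read Theorem \ref{T2} as nothing more than the combinatorial description of the balanced flats of the complete lift matroid $L_0(\Gamma,\cZ)$, so that the corollary becomes a dictionary between geometry and matroid language. First I would record that $(\Gamma,\cZ)$ really is a biased graph: since $\cZ$ consists of all circles lying entirely inside the fixed subgraph $Z$, the theta condition is automatic (if two of the three circles of a theta lie in $Z$, then all three of its constituent paths lie in $Z$, so the third circle does as well). Equivalently---and this is the form I would actually use---$(\Gamma,\cZ)$ is exactly the biased graph underlying the generic Pythagorean gain graph $\Psi$ constructed in the proof of Theorem \ref{T2}, because for generic $\bQ$ the gains $\psi(e;i,j)=\phi(e;i,j)d(Q_i,Q_j)^\alpha$ sum to $0$ around a circle if and only if every edge of that circle already has gain $0$, i.e.\ lies in $Z$.

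Next I would match the two descriptions term by term, using the rank function $\rk(S)=n-c(S\setminus\{e_0\})+\epsilon$ recorded in Section \ref{matroids}. The hypothesis in Theorem \ref{T2} that every circle of $S$ lie in $Z$ is precisely the statement that $S$ is balanced in $(\Gamma,\cZ)$, so $\epsilon=0$; then the $n-i$ components give $\rk(S)=i$, and the geometric dimension $d-i$ reads off directly. Thus the nonvoid intersections are indexed exactly by the balanced flats of rank at most $d$, namely the elements of $[\Latb(\Gamma,\cZ)]_0^d$.

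The technical heart is to see that the equivalence relation of Theorem \ref{T2} coincides with equality of closures in $L_0(\Gamma,\cZ)$, and this I would settle by computing the closure of a balanced set $S$ directly from the rank function. For $e\notin S$ one checks that $e\in\operatorname{cl}(S)$ iff both endpoints of $e$ lie in one $S$-component and $S\cup\{e\}$ remains balanced. A nonzero-gain edge can never meet the second condition, since any new circle through it fails to lie in $Z$; hence the nonzero-gain edges of $\operatorname{cl}(S)$ are exactly those of $S\setminus Z$. A zero-gain edge meets both conditions iff its endpoints lie in a common block of $\pi(S\cap Z)$, because in a balanced set every nonzero-gain edge is a bridge, so no $i$--$j$ path through a nonzero-gain edge can coexist with a zero-gain path. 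Therefore $\operatorname{cl}(S)$ is determined by the pair $(S\setminus Z,\ \pi(S\cap Z))$, which is precisely the data of the equivalence classes in Theorem \ref{T2}; this yields the first isomorphism $\cL(\cH(\alpha,\Phi;\bQ))\cong[\Latb(\Gamma,\cZ)]_0^d$.

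Finally, for the projective statement I would pass to $\cH_\bbP$ and invoke the correspondence $h_\infty\leftrightarrow e_0$ already exploited in the definition of PGP and in the proof of Theorem \ref{Tgp}: the flats at infinity are exactly the intersections that become void in $\bbE^d$, which correspond to the unbalanced flats, i.e.\ those whose matroid closure contains $e_0$, while truncating at rank $d+1$ accounts for the projective codimension bound. This gives $\cL(\cH_\bbP(\alpha,\Phi;\bQ))\cong T_{d+1}(\Lat L_0(\Gamma,\cZ))$. I expect the closure computation of the third paragraph to be the only real obstacle; everything else is bookkeeping, provided one is careful that the lift-matroid closure restricted to balanced flats keeps the nonzero-gain bridge edges rigid rather than absorbing them into the partition.
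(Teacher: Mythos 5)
Your proposal is correct and matches the paper's approach: the paper gives Corollary \ref{C2} no independent proof, presenting it exactly as you do---as a direct translation of Theorem \ref{T2} into the language of the complete lift matroid $L_0(\Gamma,\cZ)$ and its balanced flats, with the projective half resting on the semilattice-to-lattice machinery of Theorem \ref{Tgp}. Your closure computation and the verification that $\cZ$ satisfies the theta (linear subclass) condition simply make explicit the routine bookkeeping the paper leaves to the reader, and both are carried out correctly.
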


\begin{exam} \mylabel{Xnpgeneric}
In Example \ref{Xsdgeneric}, $\cZ$ consists of the one circle $e_{12}(0)e_{23}(0)e_{13}(0)$.
\end{exam}\smallskip

Corollary \ref{C2} leads to the question of describing the edge sets $S \in \Latb \Psi$.  From the proof of Theorem \ref{T2}, $S$ is balanced $\iff$ every circle in it is contained in $Z$.

\begin{prop} \mylabel{Pbal}
Let $S \subseteq E(\Psi)$.  $S \in \Latb \Psi$ $\iff$ $S$ is balanced and $e \in S$ whenever $e$ is an edge in $Z$ whose endpoints are connected by $S \cap Z$ $\iff$ $S$ is balanced and $S \cap Z$ is closed in the graphic matroid of $(V,Z)$.
\end{prop}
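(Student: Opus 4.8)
The plan is to recall that, by Corollary~\ref{C2}, $\Latb\Psi$ is the geometric semilattice of balanced flats of the complete lift matroid $L_0(\Gamma,\cZ)$, where $\cZ$ is the set of circles contained in $Z$, and that an edge set $S\subseteq E$ is balanced precisely when every circle it contains lies in $\cZ$, i.e.\ has all its edges in $Z$ (this is exactly the criterion extracted in the proof of Theorem~\ref{T2}). Since a balanced flat is nothing but a balanced edge set that is closed in $L_0$, I would reduce the whole statement to an explicit description of the closure operator on balanced sets.

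For that I would use the rank function $\rk(S)=n-c(S\setminus\{e_0\})+\epsilon$ from Section~\ref{gp}. Fix a balanced $S\subseteq E$, so $\rk(S)=n-c(S)$, and test an edge $e\notin S$. Adjoining $e_0$, or an $e$ joining two distinct components of $S$, raises the rank; an $e$ joining two vertices of a single component of $S$ leaves $c$ unchanged and hence leaves the rank unchanged iff $S\cup\{e\}$ is again balanced (otherwise $\epsilon$ jumps from $0$ to $1$). Consequently $\operatorname{cl}(S)$ contains no $e_0$, is itself balanced, and equals $S$ together with all edges $e$ whose endpoints lie in one component of $S$ and for which $S\cup\{e\}$ is balanced. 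Thus $S\in\Latb\Psi$ iff $S$ is balanced and $S$ already contains every such $e$.

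The heart of the argument is then the following lemma, which I would prove next: for balanced $S$ and an edge $e$ whose endpoints $i,j$ lie in the same component of $S$, the set $S\cup\{e\}$ is balanced if and only if $e\in Z$ and $i,j$ are joined by a path in $S\cap Z$. The forward direction is immediate, since a balanced circle through $e$ has all its edges, including $e$ and a connecting $i$--$j$ path, inside $Z$. The reverse direction is the main obstacle and rests on a structural fact about balanced sets: \emph{if $S$ is balanced, then contracting each component of $(V,S\cap Z)$ to a point turns $S\setminus Z$ into a forest.} Indeed a cycle among the contracted non-$Z$ edges, together with $Z$-paths inside the blocks it meets, would close up to a circle of $S$ using a non-$Z$ edge, contradicting balance of $S$. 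Granting this, any simple $i$--$j$ path $P$ in $S$ with $i,j$ in a common block projects to a closed trail in that forest, which must be empty; hence $P\subseteq S\cap Z$, and $e\cup P\subseteq Z$ is a balanced circle, so $S\cup\{e\}$ is balanced. Combining the lemma with the previous paragraph gives the first ``$\iff$'': $S\in\Latb\Psi$ iff $S$ is balanced and $e\in S$ whenever $e\in Z$ has its endpoints connected by $S\cap Z$.

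Finally, the second ``$\iff$'' is immediate from the standard description of closure in a graphic matroid: the closure of $S\cap Z$ in $G(V,Z)$ is exactly the set of $e\in Z$ whose endpoints are connected by $S\cap Z$. So requiring every such $e$ to lie in $S$ (equivalently in $S\cap Z$) is precisely the statement that $S\cap Z$ is closed in $G(V,Z)$, and no further work is needed.
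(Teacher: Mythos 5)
Your proposal is correct, but it takes a genuinely different (and much more self-contained) route than the paper, whose entire proof of Proposition \ref{Pbal} is the one-liner ``Just interpret the definitions of balance and balance-closure in \cite[\S\S I.5, I.3]{BG}.''  In that framework the balance-closure of a balanced edge set $S$ adjoins exactly those edges $e$ that close a balanced circle with $S$, and $S \in \Latb\Psi$ means $S$ is balanced and equals its balance-closure; since by the genericity built into Theorem \ref{T2} the balanced circles of $\Psi$ are precisely the circles contained in $Z$, the first equivalence falls out of the definitions, and the second is the standard description of closure in the graphic matroid of $(V,Z)$.  You instead rebuild all of this from the rank function of $L_0(\Psi)$: your computation that for balanced $S$ the closure $\operatorname{cl}(S)$ adds exactly the edges $e$ with both endpoints in one component of $S$ for which $S \cup \{e\}$ stays balanced is right (and it correctly shows $e_0 \notin \operatorname{cl}(S)$, so the flats of $L_0$ avoiding $e_0$ are exactly the balanced flats, which justifies your reduction); and your key lemma---that for balanced $S$, contracting the components of $(V, S\cap Z)$ turns $S\setminus Z$ into a forest, so that every simple path in $S$ joining two vertices of one $(S\cap Z)$-component lies entirely in $S\cap Z$---is a correct elementary substitute for the balance-closure calculus, with sound justifications in both directions.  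Incidentally, that forest lemma is the germ of the ``fat forest'' structure the paper invokes for the harder Proposition \ref{Pindbal}.  What the paper's citation buys is brevity and placement within the general theory of \cite{BG}; what your argument buys is a complete verification requiring no outside machinery, at the cost of reproving, in this special case, exactly what balance-closure encodes.
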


\begin{proof}
Just interpret the definitions of balance and balance-closure in \cite[\S\S I.5, I.3, resp.]{BG}.
\end{proof}

We should like a similar description of the flats of the induced arrangement $\cH(\alpha,\Phi;\bQ)^s$.  We get it by applying Theorem \ref{Tinduced} to $\Psi$.  The Pythagorean gain graph of $\cH^s$ is $\Psi^s = \Psi^\eta/E(s)$ where $\eta(i) = -d(Q_i,s)^2$.  We wish to determine the edges sets $A$ of $\Psi^s$ that belong to $\Latb \Psi^s$, hence correspond to flats of a generic $\cH^s$ (provided that $A$ has the right rank, for which it is necessary and sufficient that $c_{\Psi^s}(A) \geq n-d$).  To state a relatively nice result, we have to think of $A$ in two ways at once: as an edge set in $\Psi^s$ and also as one in $\Psi$ (which it is, since $E(\Psi^s) \subseteq E(\Psi)$).  An \emph{$S$-subcomponent} is a component of $(V, Z \cap S)$.

\begin{prop} \mylabel{Pindbal}
Let $A \subseteq E(\Psi^s)$.  $A \in \Latb \Psi^s$ $\iff$ $A$ satisfies the two conditions

(a) every circle in $A$ (as an edge set in $\Psi^s$) lies in $Z \cap A$, and

(b) for each component $A_1$ of $A$ (in $\Psi^s$), $A_1$ touches (in $\Psi$) at most one $E(s)$-subcomponent in each component of $(V,E(s))$ and, if $A_1$ touches two $E(s)$-subcomponents that are joined by an edge $e \in Z \setminus E(s)$, then $e \in A$.
\end{prop}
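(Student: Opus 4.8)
The plan is to pass from $\cH^s$ to the combinatorics of its Pythagorean gain graph $\Psi^s$ and then reduce the computation of $\Latb\Psi^s$ to one about $\Psi$ itself, where Proposition \ref{Pbal} already gives a clean description. By Theorem \ref{Tinduced} applied to $\Psi$ (the Pythagorean gain graph of $\cH(\alpha,\Phi;\bQ)$), we have $\Psi^s=\Psi^\eta/E(s)$ with $\eta(i)=-d(Q_i,s)^2$. Since switching leaves $\Latb$ unchanged and $E(s)$ is balanced (Lemma \ref{L3}), the contraction identity $\Latb(\Psi^\eta/E(s))\cong(\Latb\Psi^\eta)/E(s)=(\Latb\Psi)/E(s)$ of \cite[Prop.\ II.2.7]{BG} reduces everything to: $A\in\Latb\Psi^s$ if and only if $F:=A\cup E(s)$ is a balanced flat of $\Psi$ containing $E(s)$. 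First I would check that $A=F\setminus E(s)$ really is an edge set of $\Psi^s$, i.e.\ that every edge of $F$ outside $E(s)$ joins two distinct components of $(V,E(s))$: an edge of $F$ outside $E(s)$ with both endpoints in one component would close a circle with $E(s)$, which by balance must be all-zero, forcing the edge to be a perpendicular bisector containing $s$—hence to lie in $E(s)$, a contradiction. Thus the loop-deletion in the contraction discards nothing we care about and the correspondence $A\leftrightarrow F$ is exact.

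With this reduction in hand I would apply Proposition \ref{Pbal} to $F$ in the generic graph $\Psi$: $F\in\Latb\Psi$ iff $F$ is balanced and $F\cap Z$ is closed in the graphic matroid of $(V,Z)$. The balance clause yields condition (a) and the first half of (b). By the genericity built into $\Psi$ (established in the proof of Theorem \ref{T2}), a circle is balanced exactly when all its edges lie in $Z$, so $F$ is balanced precisely when every nonzero edge of $F$ is a bridge of $F$. The nonzero edges of $F$ are those of $A\setminus Z$ together with those of $E(s)\setminus Z$. Requiring the edges of $A\setminus Z$ to be bridges of $F$ is, after contracting $E(s)$, exactly the statement that every circle of $A$ in $\Psi^s$ lies in $Z\cap A$, which is (a). Requiring each nonzero edge of $E(s)$ to be a bridge of $F$ says that no component of $A$ reconnects the two sides of such a bridge; since the nonzero edges of $E(s)$ form, on the $E(s)$-subcomponents, a forest joining distinct subcomponents, this is precisely the condition that each component $A_1$ of $A$ meet at most one $E(s)$-subcomponent inside each component of $(V,E(s))$—the first half of (b).

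The closure clause then has to produce the second half of (b). The components of $(V,F\cap Z)$ are the $E(s)$-subcomponents (the components of $(V,Z\cap E(s))$) linked through the zero-gain edges $A\cap Z$, and a $Z$-edge $e\notin E(s)$ is forced into $F$, hence into $A$, exactly when its two endpoints already lie in one such component. I would unwind this as follows: granting (a) and the first half of (b), a component $A_1$ meeting the subcomponents $B$ and $B'$ joined by $e$ connects them along a path of $A$-edges that passes through a single subcomponent of each intermediate $E(s)$-component, so the internal $E(s)$-traversals contribute no gain and the circle formed by $e$ together with this path has gain equal to the sum of the gains of its $A$-edges. By genericity this circle is balanced—so that $e$ is forced—iff every edge of the connecting path lies in $Z$, i.e.\ iff $B$ and $B'$ are linked through $A\cap Z$. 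Translating this back gives the second half of (b), and running the equivalences in reverse (reassembling $F$ from $A$ and $E(s)$ and verifying balance and closure directly) gives the converse.

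The step I expect to be the main obstacle is precisely this last translation of the graphic-matroid closure into the subcomponent language of (b). The delicate point is that the forcing of $e$ is governed by \emph{zero-gain} connectivity, so one must use the first half of (b) together with genericity to be certain that the connecting path between two $Z$-joined subcomponents contributes no spurious gain from the internal $E(s)$-traversals, and that only a genuinely zero-gain link triggers the closure. Keeping the two readings of $A$—as edges of $\Psi^s$ and as edges of $\Psi$—carefully aligned throughout, and tracking the $\eta$-levels (the distances $d(Q_i,s)$) that decide which contracted edges are zero-gain, is the part demanding the most bookkeeping; the balance half, by contrast, is a routine bridge analysis.
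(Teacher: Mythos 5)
The paper gives you nothing to be compared against here: it explicitly omits the proof of Proposition \ref{Pindbal} as ``very technical'' and belonging to the theory of fat forests, so your proposal must stand on its own. Your skeleton is surely the intended one: reduce via $\Latb\Psi^s=\Latb(\Psi^\eta/E(s))\cong(\Latb\Psi)/E(s)$ from \cite[Prop.\ II.2.7]{BG}, so that $A\in\Latb\Psi^s$ iff $F=A\cup E(s)$ is a balanced flat of $\Psi$ containing $E(s)$, then apply Proposition \ref{Pbal}; your preliminary check that every edge of $F\setminus E(s)$ joins distinct $E(s)$-components (so the loop-deletion in the contraction loses nothing) is correct and necessary. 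The gap lies in both translation steps. First, ``every edge of $E(s)\setminus Z$ is a bridge of $F$'' is \emph{not} equivalent to the first half of (b): bridgehood constrains only edges that lie on circles, whereas (b) constrains whole components. Take $E(s)=\{e_{12}(\rho),e_{23}(0)\}$ with $\rho\neq0$, so the subcomponents of the component $\{1,2,3\}$ are $\{1\}$ and $\{2,3\}$, and take $A=\{e_{14}(\sigma_1),e_{35}(\sigma_2)\}$ with generic nonzero gains: then $F$ is a tree, hence a balanced flat with every edge a bridge, so $A\in\Latb\Psi^s$; yet $A$ is a single component of $\Psi^s$ (both edges meet the block $\{1,2,3\}$) touching both subcomponents $\{1\}$ and $\{2,3\}$, violating (b). Second, your closure analysis is the correct computation---it derives that $e\in Z\setminus E(s)$ is forced exactly when its two subcomponents are linked through $A\cap Z$---but the assertion that ``translating this back gives the second half of (b)'' is false, since (b) demands $e\in A$ upon mere touching. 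If $A$ is the single edge $e_{13}(\rho)$, $\rho\neq0$, with $E(s)=\{e_{12}(\rho_1),e_{34}(\rho_2)\}$ and $e=e_{13}(0)\in E(\Phi)$, then $h(e)\parallel h(e_{13}(\rho))$ and the digon $\{e,e_{13}(\rho)\}$ in $\Psi^s$ has gain $\rho\,d_{13}^\alpha\neq0$, so $e$ is not in the balance-closure: $A\in\Latb\Psi^s$ while (b) fails again.

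Thus the difficulty sits exactly where you predicted it would, but it cannot be repaired inside your framing, because the ``only if'' direction of the proposition, read literally, is false: what balance of $F$ actually yields is a condition on \emph{circles} of $A$ (every circle lies in $Z$, and the two attachment vertices of a circle inside any block it traverses lie in a single $E(s)$-subcomponent, since for a circle each block is visited once and the $\eta$-jumps in distinct blocks involve disjoint sets of generic gains), and what closure yields is forcing of $e$ along all-$Z$ paths of $A$ whose end attachments lie in the two named subcomponents---precisely your own intermediate formulas. Conditions (a) and (b) as printed are \emph{sufficient} ($Z$-linkage implies touching, and the first half of (b) implies the per-circle attachment condition), but not necessary, as the two examples above show. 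Your argument, correctly completed, proves the amended statement in which (b) is imposed per circle and per all-$Z$ connecting path rather than per component; for it to prove the statement as printed, ``touches'' would have to be read as contact realized through the zero-gain structure, which the text does not say. You should state this discrepancy explicitly rather than absorb it into the word ``translating.''
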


The point of Proposition \ref{Pindbal} is to show that $\Latb\Psi^s$ can be described but not so easily as $\Latb\Psi$.  The proof, which is not hard, is very technical and ungeometrical and really belongs to the theory of fat forests \cite[Ch.\ IV]{GLSP}.  Thus we omit it.


\section{A projectionist's view.} \mylabel{proj}

A quite different geometrical construction for the Pythagorean arrangement of a balanced gain graph was suggested to me by Herbert Edelsbrunner in 1984, based on a parabolic approach to Voronoi diagrams 
that goes back to the original paper \cite{Vor} 
(see Edelsbrunner and Seidel \cite[end of Note 3.1]{ES} for the plane, or the general treatment in \cite[Section 4.1]{Auren} or \cite[Section 13.1, p.\ 296]{Edels}).

Embed $\bbE^d$ with coordinate vectors $x = (x_1, \hdots, x_d)$ in $\bbE^{d+1}$ whose $(d+1)$-st coordinate is $z$.  The {\emph {fundamental paraboloid}} is the hypersurface 
$S$: $z=x_1^2+ \cdots + x_d^2$.  For $P\in \bbE^d$, let $P^*$ be its vertical projection 
up into $S$; for $Q^*\in S$, let $Q$ be its projection down to $\bbE^d$.  Let $T_{Q^*}$ be the tangent $d$-space to $S$ at $Q^* \in S$.  If $Q_i, Q_j \in \bbE^d$, then $h_{ij}$, the vertical projection into $\bbE^d$ of the intersection $T_{Q_i^*} \cap T_{Q^*_j}$, is the perpendicular bisector of the line segment between $Q_i$ and $Q_j$.  Thus the Good--Tideman arrangement can be constructed by 
\begin{enumerate}
\item[(1)]  lifting each $Q_i$ to $Q_i^* \in S$,
\item[(2)]  forming the $\binom{n}{2}$ tangent space intersections, and
\item[(3)]  projecting them back to $\bbE^d$.  
\end{enumerate}
Edelsbrunner suggested a similar procedure with step (2) replaced by 
\begin{enumerate}
\item[(2$'$)]  raise each tangent space $T_{Q_i^*}$ by an arbitrary amount 
$\eta(i) \in \bbR$ to a `displaced tangent' $T_i$, parallel to $T_{Q_i^*}$ but at height $z$ increased by $\eta(i)$, and form the intersections of the displaced tangents.
\end{enumerate}
\begin{lem}\mylabel{L1} 
This procedure forms a Pythagorean arrangement in $\bbE^d$, based on $Q_1$, $Q_2$, \dots, $Q_n$, with complete, balanced Pythagorean gain graph $\Psi = (K_n,0)^\eta$.
\end{lem}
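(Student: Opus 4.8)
The plan is to put everything in coordinates and then read off perpendicularity and the gains from a single identity. Writing $q_i$ for the coordinate vector of $Q_i$, the lifted point is $Q_i^* = (q_i, \|q_i\|^2)$, and the tangent $d$-space $T_{Q_i^*}$ to $S$ at $Q_i^*$ is the graph $z = 2q_i \cdot x - \|q_i\|^2$. The one fact that drives the whole lemma is the vertical-gap identity: for $P^* = (x, \|x\|^2)$,
\[
\|x\|^2 - (2q_i \cdot x - \|q_i\|^2) = \|x - q_i\|^2 = d(P,Q_i)^2 .
\]
Raising this tangent by $\eta(i)$ to the displaced tangent $T_i : z = 2q_i \cdot x - \|q_i\|^2 + \eta(i)$ therefore makes the vertical gap between $P^*$ and $T_i$ equal to $d(P,Q_i)^2 - \eta(i)$.

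First I would check that the projected intersections are perpendiculars. Eliminating $z$ between $T_i$ and $T_j$ gives
\[
h_{ij} : \ 2(q_i - q_j) \cdot x = \|q_i\|^2 - \|q_j\|^2 + \eta(j) - \eta(i),
\]
whose normal direction $q_i - q_j$ is parallel to the line $Q_iQ_j$; hence $h_{ij} \perp Q_iQ_j$. Since the procedure forms all $\binom{n}{2}$ tangent intersections, every pair $\{i,j\}$ contributes an edge, so the underlying graph is the complete graph $K_n$ on $\{1,\dots,n\}$ based on $Q_1,\dots,Q_n$.

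Next I would compute the Pythagorean coordinate, using the gap identity rather than the explicit equation. A point $P$ lies on $h_{ij}$ exactly when $T_i$ and $T_j$ have the same height over $x$, i.e.\ when the two displaced gaps coincide:
\[
d(P,Q_i)^2 - \eta(i) = d(P,Q_j)^2 - \eta(j),
\]
which rearranges to $\psi_{ij}(P) = \eta(i) - \eta(j)$. This value is constant on $h_{ij}$, which is precisely what it means for the projected arrangement to be Pythagorean, and it identifies the gain of edge $ij$ as $\eta(i)-\eta(j)$, matching the switched gain $\phi^\eta(e;i,j)$ for $\phi \equiv 0$ up to the direction in which the gain is read (the alternating convention $\phi^\eta(e;i,j) = -\phi^\eta(e;j,i)$ absorbs the sign). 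Thus the Pythagorean gain graph is $(K_n,0)^\eta$. Balance is then immediate: $(K_n,0)$ is balanced, and switching adds $\eta(j)-\eta(i)$ to each edge, so the added terms telescope to $0$ around any circle, preserving balance.

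I expect no serious obstacle here, since the gap identity does all the geometric work and the remainder is linear algebra; the only point demanding care is the sign bookkeeping between the displacement $\eta$ and the orientation built into $\psi_{ij}$ versus $\phi^\eta(e;i,j)$, which is a convention rather than a difficulty.
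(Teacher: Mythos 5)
Your proof is correct and is essentially the paper's own argument: the paper likewise writes the displaced tangent as $z = 2a_i\cdot x - c_i + \eta(i)$, eliminates $z$, and recognizes the resulting equation of $h_{ij}$ as ``$\psi_{ij}(P) = \text{constant}$''; your vertical-gap identity is the same algebra, merely packaged as the standard paraboloid-lifting fact (it is also the power-diagram identity $d(P,Q_i)^2 - \eta(i) = t_i(P)^2$ noted in Section \ref{switch}), and your telescoping check of balance is a standard point the paper leaves implicit.

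One caveat about your final identification: the alternating convention does not ``absorb'' the sign. With $\psi_{ij}$ as in \eqref{E2} and the paper's switching rule $\phi^\eta(e;i,j) = \phi(e;i,j) - \eta(i) + \eta(j)$, your (correct) computation $\psi_{ij}(P) = \eta(i) - \eta(j)$ on $h_{ij}$ identifies the Pythagorean gain graph as $(K_n,0)^{-\eta}$, not $(K_n,0)^{\eta}$: negating every gain is the same as reversing every edge, which gives a genuinely different gain graph unless $\eta$ is constant. The discrepancy is harmless here because $\eta$ is an arbitrary function (replace it by $-\eta$), and in fact the paper's own proof makes the mirror-image slip by writing $\psi_{ij}(P) = \|x - a_j\|^2 - \|x - a_i\|^2$, the negative of \eqref{E2}. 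So the honest conclusion of your computation is that lifting by $\eta$ yields $\Psi = (K_n,0)^{-\eta}$, equivalently the lemma as stated with $\eta$ replaced by $-\eta$; the substance---that the projected arrangement is Pythagorean with complete, balanced gain graph obtained from $(K_n,0)$ by switching---is unaffected.
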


\begin{proof} Let $Q_i$ have coordinate vector $a_i = (a_{i1},\ldots,a_{id})$ and $P$ have
coordinate vector $x$ in $\bbE^d$.  The tangent space at $Q_i^*$, revised by $\eta(i)$ to $T_i$, has equation $z = 2a_i \cdot x - c_i + \eta(i)$, where $c_i = a_i\cdot a_i$, the $z$-coordinate of $Q_i^*$.  Eliminating $z$, $T_i \cap T_j$ satisfies the equation 
$$
- 2(a_j - a_i) \cdot x + c_j - c_i = \eta(j) - \eta(i),
$$
which (with $z = 0$) defines $h_{ij}$, the projection into $\bbE^d$.

Let us calculate the Pythogorean coordinate of a point $P \in \bbE^d$.  It is 
$\psi_{ij}(P) = \|x-a_j\|^2 - \|x-a_i\|^2 = 2(a_i-a_j) \cdot x + c_j - c_i$.  Thus the equation of $h_{ij}$ can be written $\psi_{ij}(P) = \eta(j) - \eta(i)$.  Since the right side is a constant, $h_{ij}$ is a hyperplane perpendicular
to $Q_iQ_j$.  The form of the constant demonstrates that $\Psi = (K_n,0)^\eta$.
\end{proof}

We note that Lemma \ref{L1} is a trivial generalization of \cite[Lemma 4]{Auren}.

\begin{prop} \mylabel{P1} 
(a) Let $T_1, T_2, \hdots, T_n$ be arbitrary nonvertical affine $d$-spaces in 
$\bbE^{d+1}$, no two parallel, and let $h_{ij}$ be the projection into $\bbE^d$ of $T_i \cap T_j$.  Then $\{ h_{ij} \}$ is a Pythagorean arrangement described by a balanced, complete gain graph $\Psi$, based on those points $Q_1, Q_2, \hdots, Q_n$ such that $Q_i^*$ is the unique point where a translate of $T_i$ is tangent to $S$.

(b) Conversely let $\Phi$ be a balanced, complete gain graph on $n$ vertices and let 
$Q_1, Q_2, \hdots, Q_n$ be distinct points in $\bbE^d$.  Then there exist affine $d$-spaces $T_1, T_2, \hdots, T_n$ in $\bbE^{d+1}$ such that the Pythagorean arrangement $\cH(\Phi; Q_1, \hdots, Q_n)$ equals the arrangement $\{ h_{ij} \}$ derived from $T_1, T_2, \hdots, T_n$ by the procedure in (a).
\end{prop}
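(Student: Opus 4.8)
The plan is to recognize that both parts of Proposition \ref{P1} are inversions of the computation already carried out in Lemma \ref{L1}, so the real content is a dictionary between nonvertical affine $d$-spaces in $\bbE^{d+1}$ and pairs consisting of a base point in $\bbE^d$ together with a vertical displacement. The key observation is that every nonvertical affine $d$-space $T$ with equation $z = b \cdot x + \gamma$ is a unique vertical translate of a unique tangent $d$-space to the fundamental paraboloid $S$: since the tangent to $S$ at $(a, \|a\|^2)$ has slope vector $2a$, the plane $T$ is parallel to the tangent at $Q^* = (b/2, \|b/2\|^2)$, and the height by which $T$ lies above that tangent is a well-defined real number $\eta$. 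I would set this correspondence up once and use it in both directions.

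For part (a), I would write each $T_i$ as $z = b_i \cdot x + \gamma_i$ (possible precisely because the $T_i$ are nonvertical). Applying the dictionary, I put $Q_i = b_i/2 \in \bbE^d$, so that $Q_i^*$ is the unique point of tangency of a translate of $T_i$ on $S$, and I read off the displacement $\eta(i) = \gamma_i + \|b_i/2\|^2$ as the height of $T_i$ above the tangent at $Q_i^*$. The hypothesis that no two $T_i$ are parallel is exactly the statement that the slope vectors $b_i$ are distinct, hence that the $Q_i$ are distinct. Now $T_i$ is by construction the tangent at $Q_i^*$ raised by $\eta(i)$, so the arrangement $\{h_{ij}\}$ of projected intersections is precisely the one produced by the displaced-tangent procedure of Lemma \ref{L1}; that lemma then identifies it as the Pythagorean arrangement on $Q_1, \hdots, Q_n$ with complete, balanced gain graph $\Psi = (K_n, 0)^\eta$.

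For the converse (b), the one genuinely graph-theoretic ingredient enters: because $\Phi$ is balanced and complete, it is switching-equivalent to the all-zero gain graph, i.e.\ there is a function $\eta : \{1, \hdots, n\} \to \bbR$ with $\phi(e_{ij}; i, j) = \eta(j) - \eta(i)$, so that $\Phi = (K_n, 0)^\eta$ exactly (matching the switching convention $\phi^\eta(e;i,j) = \phi(e;i,j) - \eta(i) + \eta(j)$). Given the prescribed reference points $Q_1, \hdots, Q_n$, I would lift each to $Q_i^* \in S$, take the tangent $d$-space there, and raise it by $\eta(i)$ to obtain $T_i$. By Lemma \ref{L1} the arrangement derived from $T_1, \hdots, T_n$ by the procedure of part (a) is $\cH((K_n, 0)^\eta; \bQ) = \cH(\Phi; \bQ)$, as required.

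The proof is more verification than obstacle: the only points demanding care are the sign and normalization conventions in the dictionary (matching $b_i = 2a_i$ and tracking $\eta(i)$ against the switching convention so that $(K_n,0)^\eta$ comes out with the intended gains), and, in part (b), the appeal to the standard fact that a balanced complete gain graph is the switching of the identity gain. Everything else is a direct transcription of Lemma \ref{L1}.
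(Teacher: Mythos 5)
Your proposal is correct and follows essentially the same route as the paper: both parts reduce to Lemma \ref{L1}, with (a) resting on the observation that each nonvertical $T_i$ is a vertical translate, by a well-defined height $\eta(i)$, of a unique tangent space to $S$ (you merely make the dictionary $z = b_i\cdot x + \gamma_i \leftrightarrow (Q_i,\eta(i))$ with $Q_i = b_i/2$ and $\eta(i)=\gamma_i+\|b_i/2\|^2$ explicit, where the paper states it tersely), and (b) using the same classical potential characterization of tensions to write $\Phi = (K_n,0)^\eta$ before lifting and raising the tangent spaces. Your added sign-and-normalization checks and the remark that nonparallelism of the $T_i$ yields distinctness of the $Q_i$ are consistent with, and slightly more careful than, the paper's version.
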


\begin{proof}  
(a)  Suppose $T_i$ has to be lowered vertically a distance $\eta(i)$ to become tangent to $S$.  Then $\{ h_{ij} \}$ is as defined before Lemma \ref{L1} and $\Psi$ is as in Lemma \ref{L1}.

(b)  Since $\Psi$ is balanced and connected, there exist numbers $\eta(1), \eta(2), \ldots, \eta(n)$, unique up to an additive constant, so that $\psi(e; i, j) = \eta(j) - \eta(i)$.  (This is a well-known characterization of tensions on a graph in terms of potentials.  See, e.g., \cite{Berge1} or \cite[\S 2.3, Thm.\ 5]{Berge2}.)  Thus $\Psi = (K_n,0)^\eta$.  The $d$-space $T_i$ is the result of raising $T_{Q^*_i}$ the distance $\eta(i)$.
\end{proof}


\section{Invariants and face enumeration} \mylabel{invar} 

There is a class of invariants of hyperplane arrangements that are determined by the geometric semilattice:  they include the Whitney numbers and the characteristic polynomial.
These invariants of a Pythagorean arrangement $\cH(\Phi;\bQ)$ in $\bbE^d$ with generic reference points are readily derivable from corresponding invariants of $\Phi$, because by Corollary \ref{C1} $\Lat\cH(\Phi;\bQ) \cong [\Latb\Phi]^d_0$.  Similarly, if $\alpha \neq 0$, one can deduce the invariants of a generic non-Pythagorean arrangement $\cH(\alpha,\Phi;\bQ)$ from the fact that $\Lat \cH(\alpha,\Phi;\bQ) \cong [\Latb\Psi]^d_0$, where $\Psi$ is as in Section \ref{nonpyth}.

Throughout this section, $\Phi$ is a real, additive gain graph with $n$ vertices and $\bQ = (Q_1,\hdots,Q_n) \in (\bbE^d)^n$.

Reasonably obvious is this basic observation:

\begin{thm} \mylabel{P2} 
Given $\Phi$, $d$, and $\alpha$ (zero or not), the numbers of $k$-dimensional flats, faces, and bounded faces of $\cH(\alpha,\Phi;\bQ)$ are maximized for generic $\bQ$; specifically, when $\bQ$ has general position with respect to $\Phi$.
\hfill $\square$
\end{thm}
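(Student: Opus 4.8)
The plan is to reduce all three quantities to the intersection semilattice and then to a single monotonicity principle. Each of the numbers in question is determined by $\cL(\cH(\alpha,\Phi;\bQ))$ through the boxed formulas: the flat count is $a_k = W_{d-k}(\cL) = w_{d-k,d-k}(\cL)$, while the face and bounded-face counts are the Whitney-number expressions (\ref{face-num-def}). By the structure results \ref{C1} and \ref{C2}, when $\bQ$ is in general position with respect to $\Phi$ the semilattice is the canonical model $[\Latb\Psi]_0^d$ (with $\Psi = \Phi$ when $\alpha = 0$, and $\Psi$ as in Section \ref{nonpyth} otherwise). So it suffices to compare an arbitrary $\bQ$ against this generic model, and the natural place to do the comparison is the projective dependence matroid on the ground set $E(\Phi) \cup \{h_\infty\}$, whose lattice of flats is $\cL(\cH_\bbP(\alpha,\Phi;\bQ))$.

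First I would set up the specialization. A subcollection of the projective hyperplanes is independent (meets in the codimension predicted by its size) precisely when a certain minor of the matrix of their coefficients is nonzero, which is an open condition on $\bQ \in (\bbE^d)^n$. Hence independence is preserved under small motions: perturbing any $\bQ_0$ to a nearby general-position $\bQ$, every subcollection independent at $\bQ_0$ stays independent, while some sets that were dependent at $\bQ_0$ become independent. Thus the generic matroid $M_{\mathrm{gen}} \cong T_{d+1}(\Lat L_0(\Psi))$ is the freest realization, and the matroid $M_{\bQ_0}$ of any $\bQ_0$ is a weak-map image (a specialization) of $M_{\mathrm{gen}}$.

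Second, I would invoke monotonicity of Whitney numbers under specialization: if $M'$ is a weak-map image of $M$ then $|w_{ij}(\Lat M')| \le |w_{ij}(\Lat M)|$ for all $i,j$ (the doubly indexed numbers inherit this from the ordinary case applied to upper intervals, via the no-broken-circuit interpretation of $|w_i|$, which counts a set whose cardinality can only shrink under a weak map). Since $a_k = w_{d-k,d-k}(\cL)$ and, by (\ref{face-num-def}), $f_k = \sum_{j=d-k}^d |w_{d-k,j}(\cL)|$ is a sum of such absolute values, monotonicity shows at once that $a_k$ and $f_k$ are maximized exactly when $M_{\bQ} = M_{\mathrm{gen}}$, that is, for $\bQ$ in general position with respect to $\Phi$.

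The delicate point, and the one I expect to be the real obstacle, is the bounded faces: $b_k = |\sum_{j=d-k}^d w_{d-k,j}(\cL)|$ is an alternating signed sum (recall $\sgn w_{ij} = (-1)^{j-i}$), so term-by-term monotonicity of the $|w_{ij}|$ does not control it. Here I would argue geometrically rather than algebraically. Deform $\bQ_0$ to a general-position $\bQ$ by an arbitrarily small motion and track each bounded $k$-face $F$ of $\cH(\alpha,\Phi;\bQ_0)$. Because $\overline{F}$ is compact, for a sufficiently small perturbation the slightly moved (and possibly newly separated) hyperplanes meet a fixed neighborhood of $\overline{F}$ in a configuration that subdivides it into bounded faces only, so $F$ is neither destroyed nor turned unbounded, while previously coincident hyperplanes pulling apart can only introduce additional bounded faces. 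Hence $b_k$ can only increase, which gives the claim. The genuine technical work is to make ``every bounded face persists or splits into bounded faces, and no bounded face becomes unbounded'' precise — a compactness and transversality argument showing that the perturbed face poset refines that of $\cH(\alpha,\Phi;\bQ_0)$ and that under a small enough deformation no unbounded face can arise from a bounded one.
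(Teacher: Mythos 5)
The paper offers no proof of Theorem \ref{P2} at all---it is introduced as a ``reasonably obvious'' observation and closed with a tombstone---so your proposal must stand on its own merits, and as written it has two genuine gaps, one in each half. The algebraic half rests on the claim that a weak map decreases every \emph{doubly indexed} Whitney number in absolute value, justified ``from the ordinary case applied to upper intervals.'' That justification does not go through: a rank-$i$ flat $x'$ of the weak image $M_{\bQ_0}$ need not be a flat of $M_{\mathrm{gen}}$, and even when it is, there is no induced weak map of the right rank between the contractions whose lattices of flats are the relevant upper intervals, so the singly indexed result cannot simply be quoted. Moreover the known monotonicity theorem for $|w_i|$ (Lucas) requires the weak map to be \emph{rank-preserving}, and in your setting the rank can genuinely drop (e.g.\ collinear reference points make every hyperplane parallel to a common direction, enlarging $\bigcap\cH_\bbP$); and you need the statement for the Euclidean semilattice $\cL(\cH)=\cL(\cH_\bbP)\setminus[h_\infty,\hat1]$, not merely for $\Lat M$. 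Your specialization setup itself---semicontinuity of rank plus density of generic $\bQ$ (Theorems \ref{C1} and \ref{C2}), making $M_{\bQ_0}$ a weak image of $T_{d+1}(\Lat L_0(\Psi))$---is sound; but the doubly indexed monotonicity is a lemma you must actually prove, not cite.

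The geometric half, for $b_k$, fails as sketched. Tracking a witness point $x\in\operatorname{relint}F$ of a bounded $k$-face does not establish persistence: the hyperplanes containing $F$ move off $x$ under the perturbation, and the face of the perturbed arrangement containing $x$ can be a higher-dimensional \emph{unbounded} face. Concretely, let $F$ be a bounded segment on a line $h$ in the plane whose adjacent $2$-face on one side is unbounded (the typical situation); if the perturbation pushes $h$ to the other side of $x$, the new face containing $x$ is essentially that unbounded region. So the assertion ``$F$ is neither destroyed nor turned unbounded,'' read through the witness, is false, and the true statement---that near $\overline F$ the perturbed arrangement has at least as many bounded faces, \emph{dimension by dimension}---is precisely the content of the theorem; it needs either a genuine matching of bounded faces (not witness points) or an algebraic substitute, e.g.\ controlling $b_k=\bigl|\sum_{j=d-k}^d w_{d-k,j}\bigr|$ via a $\beta$-invariant-style monotonicity under specialization, which again runs into the rank-preservation issue above. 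You candidly flag this step as ``the genuine technical work,'' and that is the correct self-assessment: the proposal is a plausible strategy with both decisive lemmas still unproved.
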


The fundamental enumerative result about arrangements of perpendiculars is

\begin{thm} \mylabel{Tenum} 
Given $\Phi$, $d$, and $\alpha$ (zero or not).  Generically, the numbers of $k$-dimensional flats, faces, and bounded faces of $\cH = \cH(\alpha,\Phi;\bQ)$ are given by the formulas
\begin{align*}
f_k(\cH) &= {\sum^d_{j=d-k}} | w_{d-k,j}(\Latb \Psi) | ,  \\
b_k(\cH) &= | {\sum^d_{j=d-k}} w_{d-k,j}(\Latb \Psi) | ,  \\
a_k(\cH) &= W_{d-k}(\Latb \Psi)
\end{align*}
for $0 \leq k \leq d$, where $\Psi$ is the Pythagorean gain graph of $\cH$, defined by \eqref{Epsi}.  The numbers of flats and faces of $\cH_\bbP$ if $0 \leq k \leq d < n$ are given by
\begin{align*}
f_k(\cH_\bbP) &= | \sum_{\substack{ j=d-k \\ d-j\text{ even} }}^d  w_{d-k,j}(\Lat L_0(\Psi)) | , \\
a_k(\cH_\bbP) &= W_{d-k}(\Lat L_0(\Psi)).
\end{align*}
\end{thm}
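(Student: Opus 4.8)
The plan is to reduce everything to the general face-counting formulas (\ref{first-num-def})--(\ref{last-num-def}) of Section~\ref{arrs}, which already express all the $f_k$, $b_k$, and $a_k$ in terms of the (doubly indexed) Whitney numbers of $\cL(\cH)$ and $\cL(\cH_\bbP)$, and then to transport those Whitney numbers onto $\Latb\Psi$ and $\Lat L_0(\Psi)$. First I would fix a generic $\bQ$ and identify the intersection semilattices. When $\alpha=0$ we have $\Psi=\Phi$, and Corollary~\ref{C1} gives $\cL(\cH)\cong[\Latb\Psi]_0^d$ together with $\cL(\cH_\bbP)\cong T_{d+1}(\Lat L_0(\Psi))$. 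When $\alpha\neq0$, the Pythagorean gain graph $\Psi$ of \eqref{Epsi} has, for generic $\bQ$, no balanced circle except those lying wholly in $Z$ (this is precisely the genericity exploited in the proof of Theorem~\ref{T2}); hence $\Latb\Psi=\Latb(\Gamma,\cZ)$ and $L_0(\Psi)=L_0(\Gamma,\cZ)$, and Corollary~\ref{C2} again yields $\cL(\cH)\cong[\Latb\Psi]_0^d$ and $\cL(\cH_\bbP)\cong T_{d+1}(\Lat L_0(\Psi))$. Thus in both cases the two intersection semilattices are identified with the cut, respectively the truncation, of the gain-graphic objects.

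The key step is the observation that passing from $\Latb\Psi$ to its cut $[\Latb\Psi]_0^d$, and from $\Lat L_0(\Psi)$ to its truncation $T_{d+1}$, leaves every Whitney number indexed by ranks $\le d$ unchanged. Indeed, the cut only discards elements of rank exceeding $d$, and the truncation only collapses the ranks $\ge d+1$ into a single new top; in either case every element $y$ of rank $j\le d$ survives, and the whole order ideal below $y$ -- hence every interval $[x,y]$ -- lies in ranks $\le d$ and is literally unchanged. Therefore $\mu(x,y)$ is computed identically before and after, so
$$
w_{ij}\bigl([\Latb\Psi]_0^d\bigr)=w_{ij}(\Latb\Psi),
\qquad
w_{ij}\bigl(T_{d+1}(\Lat L_0(\Psi))\bigr)=w_{ij}(\Lat L_0(\Psi))
$$
for all $0\le i\le j\le d$, and likewise the second-kind numbers $W_{d-k}=w_{d-k,d-k}$ agree (since $d-k\le d$). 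I would then substitute these equalities into (\ref{face-num-def}) to recover $f_k(\cH)$ and $b_k(\cH)$ verbatim, and read off $a_k(\cH)=W_{d-k}(\Latb\Psi)$ from the identity $W_{d-k}(\cL(\cH))=a_k(\cH)$ of Section~\ref{arrs}.

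For the projective formulas I would apply (\ref{last-num-def}) to $\cL(\cH_\bbP)$. In the generic situation, with $d<n$, the empty-intersection branch of (\ref{last-num-def}) is the one in force: its sum stops at $j=d$, exactly the range in which the truncation is invisible, so substituting $w_{d-k,j}(\Lat L_0(\Psi))$ and $W_{d-k}(\Lat L_0(\Psi))$ produces the stated $f_k(\cH_\bbP)$ and $a_k(\cH_\bbP)$. The one cosmetic point is the sign: by Rota's rule $\sgn w_{ij}=(-1)^{j-i}$, every term with $i=d-k$ and $d-j$ even carries the common sign $(-1)^{k}$, so the factor $(-1)^k\sum|w_{d-k,j}|$ produced by (\ref{last-num-def}) equals $\bigl|\sum w_{d-k,j}\bigr|$, matching the theorem as written.

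The main obstacle is really the bookkeeping of the second paragraph: one must verify carefully that the rank-$\le d$ Whitney numbers are genuinely untouched both by the cut $[\,\cdot\,]_0^d$ (a geometric semilattice obtained by deletion of high ranks) and by the truncation $T_{d+1}$ (obtained by collapse of high ranks), and, in the projective case, that the empty-intersection branch of (\ref{last-num-def}) is the operative one generically. Everything else is a direct invocation of Corollaries~\ref{C1} and~\ref{C2} together with the elementary sign check above.
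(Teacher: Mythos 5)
Your proposal is correct and is essentially the paper's own proof, which consists precisely of combining the general enumeration formulas (\ref{first-num-def})--(\ref{last-num-def}) of Section \ref{arrs} with the generic identifications $\cL(\cH)\cong[\Latb\Psi]_0^d$ and $\cL(\cH_\bbP)\cong T_{d+1}(\Lat L_0(\Psi))$ furnished by Theorem \ref{C1} (for $\alpha=0$) and Corollary \ref{C2} (for $\alpha\neq0$). Your extra bookkeeping---that the doubly indexed Whitney numbers in ranks $\le d$ are unchanged by the cut $[\,\cdot\,]_0^d$ and by the truncation $T_{d+1}$, and the sign check via $\sgn w_{ij}=(-1)^{j-i}$---merely makes explicit what the paper's one-line proof leaves to the reader.
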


\begin{proof}
This is merely a combination of the general arrangement enumerations in Section \ref{arrs} with the particular description of generic $\cL(\cH)$ in Section \ref{gp}.
\end{proof}

In order to state these results about $\cH$ most elegantly we need to define some
polynomials.  The {\it characteristic polynomial} of an affine arrangement $\cE$ in $\bbE^d$ is 
\begin{equation*}
p_\cE (\lambda) = \sum^d_{j=0} w_j(\cE)\lambda^{d-j}.
\end{equation*}
The {\it Whitney-number polynomial} is 
\begin{equation*}
w_\cE(x,\lambda) = \underset {0\leq i\leq j\leq d}{\sum\sum} w_{ij}(\cE)x^i\lambda^{d-j}.
\end{equation*}
(In these formulas we define $w_j = w_{ij} = 0$ if $j > \rk \cE$.) Thus \eqref{first-num-def} and \eqref{face-num-def} can be expressed by the formulas
\begin{equation*}
f_d(\cE) = (-1)^dp_\cE(-1) \quad\text{ and }\quad \sum_k f_k(\cE)x^k = (-1)^dw_\cE(-x,-1),
\end{equation*}
while
\begin{equation*}
b_d(\cE) = (-1)^dp_\cE(1)\quad \text{ and }\quad \sum_kb_k(\cE)x^k = (-1)^dw_\cE(-x,1).
\end{equation*}

Turning to a gain graph $\Phi$ of order $n$, we may define its {\it balanced chromatic polynomial} as 
\begin{equation*}
\chi^\textb_\Phi(\lambda) = \sum^n_{j=0} w_j(\Latb\Phi)\lambda^{n-j}
\end{equation*}
and its {\it balanced Whitney-number polynomial} as 
\begin{equation*}
w^\textb_\Phi(x,\lambda) = \underset {0\leq i\leq j\leq n}{\sum\sum} w_{ij}(\Latb\Phi)
x^i\lambda^{n-j}.
\end{equation*}
(Again, $w_j = w_{ij} = 0$ if $j > \rk(\Latb\Phi)$.)  If $\Phi$ has $c$ connected components (thus $\Latb\Phi$ has rank $n-c$), the \emph{characteristic} and \emph{Whitney-number polynomials} of $\Latb\Phi$ are 
\begin{equation*}
p_{\Latb\Phi}(\lambda) = \lambda^{-c}\chi^\textb_\Phi(\lambda)\quad 
\text{ and }\quad w_{\Latb\Phi}(x,\lambda) = \lambda^{-c}w^\textb_\Phi(x,\lambda).
\end{equation*}

\begin{thm} \mylabel{Tinvarp}
Let $\cH = \cH(\Phi;\bQ)$ have generic reference points in $\bbE^d$, where $d\leq n$. Then $p_\cH(\lambda)$ and $w_\cH(\lambda)$ equal the polynomial parts of
$\chi^\textb_\Phi(\lambda)/\lambda^{n-d}$ and $w^\textb_\Phi(x,\lambda)/\lambda^{n-d}$, respectively.
\end{thm}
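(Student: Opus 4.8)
The plan is to let everything rest on the generic semilattice isomorphism $\cL(\cH) \cong [\Latb\Phi]^d_0$ furnished by Corollary \ref{C1}, and then to observe that the two asserted identities are nothing more than a bookkeeping comparison of Whitney numbers that the division by $\lambda^{n-d}$ makes transparent.

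First I would isolate the one combinatorial fact that does the work: deleting from $\Latb\Phi$ every element of rank exceeding $d$ (to form the geometric semilattice $[\Latb\Phi]^d_0$) leaves untouched every interval $[\hat0,y]$, and more generally every interval $[x,y]$, whose top $y$ satisfies $\rk y \leq d$. Indeed each element of such an interval has rank at most $\rk y \leq d$, so the interval lies wholly within $[\Latb\Phi]^d_0$ and its M\"obius values agree with those computed in $\Latb\Phi$. Since $w_j = \sum\{\mu(\hat0,y):\rk y = j\}$ and $w_{ij} = \sum\{\mu(x,y):\rk x = i,\ \rk y = j\}$, it follows that $w_j(\cL(\cH)) = w_j(\Latb\Phi)$ and $w_{ij}(\cL(\cH)) = w_{ij}(\Latb\Phi)$ for all indices with $j \leq d$.

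Next I would simply unwind the defining sums. Dividing $\chi^\textb_\Phi(\lambda) = \sum_{j=0}^n w_j(\Latb\Phi)\lambda^{n-j}$ by $\lambda^{n-d}$ turns the $j$-th term into $w_j(\Latb\Phi)\lambda^{d-j}$, whose exponent is nonnegative precisely when $j \leq d$; hence the polynomial part is $\sum_{j=0}^d w_j(\Latb\Phi)\lambda^{d-j}$, which by the previous step equals $\sum_{j=0}^d w_j(\cL(\cH))\lambda^{d-j} = p_\cH(\lambda)$. The same manipulation applied to $w^\textb_\Phi(x,\lambda)$ leaves the powers of $x$ inert while shifting the $\lambda$-exponents from $n-j$ to $d-j$, so its polynomial part in $\lambda$ is $\sum\sum_{0\leq i\leq j\leq d} w_{ij}(\Latb\Phi)x^i\lambda^{d-j} = w_\cH(x,\lambda)$.

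I do not expect a genuine obstacle; the substance is entirely carried by Corollary \ref{C1}. The single point deserving care is that the operation relating $\Latb\Phi$ to $\cL(\cH)$ is a \emph{deletion} of the high-rank flats, not the $T_r$-style collapse to a common $\hat1$ used elsewhere in the paper --- it is exactly this that keeps the lower intervals verbatim and thereby preserves the M\"obius values, and it is what legitimises reading off only the ``polynomial part'' (the nonnegative powers of $\lambda$) rather than the full Laurent expansion.
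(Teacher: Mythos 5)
Your proposal is correct and is exactly the paper's argument: the paper proves Theorem \ref{Tinvarp} with the one-line remark ``this is the conclusion of the preceding discussion,'' meaning precisely the combination of the generic isomorphism $\cL(\cH)\cong[\Latb\Phi]^d_0$ (Corollary \ref{C1}) with the definitions of the polynomials, which you have simply spelled out. Your explicit observation that passing to $[\Latb\Phi]^d_0$ is a deletion of high-rank elements preserving every interval with top of rank at most $d$ (hence all $w_j$ and $w_{ij}$ with $j\leq d$) is the right justification for the bookkeeping step the paper leaves tacit.
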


\begin{proof}  
This is the conclusion of the preceding discussion.
\end{proof}

Theorems \ref{Tenum} and \ref{Tinvarp} reduce the problem of counting regions or faces in $\cH$ to that of finding the balanced chromatic or Whitney-number polynomial of $\Phi$.  This approach will be illustrated in Section \ref{enum}.  

For non-Pythagorean descriptors there is a similar result.

\begin{thm} \mylabel{Tinvarnp}
With notation as in Corollary \ref{C2}, let $\cH = \cH(\alpha,\Phi;\bQ)$ with generic
$\bQ \in (\bbE^d)^n$.  Then $p_\cH(\lambda)$ and $w_\cH(x,\lambda)$ are the polynomial parts of $\chi^\textb_\Psi(\lambda)/\lambda^{n-d}$ and $w^\textb_\Psi(x,\lambda)/\lambda^{n-d}$, respectively. 
\hfill$\square$
\end{thm}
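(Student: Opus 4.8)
The plan is to follow verbatim the reasoning behind Theorem \ref{Tinvarp}, substituting the Pythagorean gain graph $\Psi$ of \eqref{Epsi} for the gain graph $\Phi$. The single new ingredient is that the generic intersection structure of $\cH(\alpha,\Phi;\bQ)$ is governed by $\Latb\Psi$ rather than by $\Latb\Phi$, and this is exactly what Corollary \ref{C2} provides: for generic $\bQ$ one has $\cL(\cH)\cong[\Latb\Psi]_0^d$, where $\Latb\Psi=\Latb(\Gamma,\cZ)$ because an edge set is balanced in $\Psi$ precisely when each of its circles lies in $Z$. (Equivalently, one may read off the isomorphism directly from the first display of Theorem \ref{Tenum}, whose formulas are already stated in terms of $\Latb\Psi$.)

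First I would record the only combinatorial fact that is needed. Since the geometric semilattice $[\Latb\Psi]_0^d$ consists precisely of the elements of $\Latb\Psi$ of rank at most $d$, with order and M\"obius function inherited from $\Latb\Psi$, every interval $[x,y]$ with $\rk y\le d$ lies entirely inside it, so $\mu$ agrees on all such intervals. Consequently the doubly-indexed Whitney numbers match in the only range that matters:
$$
w_{ij}(\cL(\cH))=w_{ij}(\Latb\Psi)\qquad\text{for all }0\le i\le j\le d,
$$
and in particular $w_j(\cL(\cH))=w_j(\Latb\Psi)$ for $0\le j\le d$.

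Next I would carry out the bookkeeping on the powers of $\lambda$. From $\chi^\textb_\Psi(\lambda)=\sum_{j=0}^n w_j(\Latb\Psi)\lambda^{n-j}$ we get
$$
\frac{\chi^\textb_\Psi(\lambda)}{\lambda^{n-d}}=\sum_{j=0}^n w_j(\Latb\Psi)\,\lambda^{d-j},
$$
in which the terms with $j\le d$ are exactly those carrying nonnegative powers of $\lambda$; their sum is $\sum_{j=0}^d w_j(\Latb\Psi)\lambda^{d-j}$, which by the displayed equality of Whitney numbers equals $\sum_{j=0}^d w_j(\cL(\cH))\lambda^{d-j}=p_\cH(\lambda)$. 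The identical manipulation applied to $w^\textb_\Psi(x,\lambda)/\lambda^{n-d}$ isolates the terms with $j\le d$ and identifies their sum with $w_\cH(x,\lambda)$. This is where the hypothesis $d<n$ (inherited from Corollary \ref{C2}) is used, so that ``polynomial part'' is genuinely discarding the terms with $j>d$.

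I expect no genuine obstacle here: all the content is packaged in the generic isomorphism $\cL(\cH)\cong[\Latb\Psi]_0^d$ of Corollary \ref{C2}, and what remains is the same extraction of a polynomial part as in Theorem \ref{Tinvarp}. The only point deserving a moment's care is the agreement of the Whitney numbers up to rank $d$, which is immediate once one notes that $[\Latb\Psi]_0^d$ is literally the rank-$\le d$ part of $\Latb\Psi$ with inherited M\"obius function; accordingly the proof can be stated in a single short paragraph, as its Pythagorean counterpart is.
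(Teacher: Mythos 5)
Your proposal is correct and follows exactly the route the paper intends: the paper states Theorem \ref{Tinvarnp} with no separate argument precisely because, once Corollary \ref{C2} supplies the generic isomorphism $\cL(\cH)\cong[\Latb\Psi]_0^d$, the proof of Theorem \ref{Tinvarp} applies verbatim with $\Psi$ in place of $\Phi$. Your explicit verification that the doubly-indexed Whitney numbers agree in the range $0\le i\le j\le d$ and the polynomial-part bookkeeping merely spell out what the paper leaves implicit.
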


\begin{exam} \mylabel{Xdeforminvar}
For the affinographic arrangements of Example \ref{Xdeform}, $\Lat
\cH(\Phi;\bQ)\cong \Latb\Phi$.  (The reference points are generic by Proposition \ref{Ppoint}.)  Thus Theorem \ref{Tinvarp} applies with $d = n$.
\end{exam}
\medskip

The Pythagorean planar case has a nice description in terms of the structure of $\Phi$.  Let $m_{ij}$ be the number of edges between $i$ and $j$ in $\Phi$, $q$ the total number of edges, and $s_2$ the second elementary symmetric function of the $m_{ij}$.  Let $t$ be the number of balanced triangles in $\Phi$ and let $t_0$ be the number of triangles in the zero-gain edge set.

\begin{cor} \mylabel{T7} 
If $Q_1, \hdots, Q_n$ are generic in $\bbE^2$, then the Pythagorean arrangement of lines $\cH(\Phi; Q_1, \hdots, Q_n)$ has 
\begin{equation*}
\begin{array}{rll}
  a_2 =& q                       &\text{ lines},                     \\
  a_1 =& s_2                  &\text{ points},                    \\
  f_2 =& 1 + q + s_2 - t      &\text{ regions},                   \\
  f_1 =& q + 2s_2 - 3t        &\text{ geometric edges (1-faces)}, \\
  b_2 =& 1 + s_2 -q - t       &\text{ bounded regions},           \\
  b_1 =& 2s_2 - q - 3t        &\text{ bounded geometric edges.} 
\end{array}
\end{equation*}
\end{cor}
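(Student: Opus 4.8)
The plan is to reduce everything to the Whitney numbers of $\Latb\Phi$ and then invoke the general enumeration formulas. By Corollary \ref{C1}, for generic $\bQ$ we have $\cL(\cH) \cong [\Latb\Phi]_0^2$, so every face and flat number is determined by the doubly indexed Whitney numbers $w_{ij}(\Latb\Phi)$ with $0 \le i \le j \le 2$, through Theorem \ref{Tenum} (equivalently formulas \eqref{first-num-def}--\eqref{face-num-def}). Thus the whole corollary follows once these six Whitney numbers are expressed in terms of $q$, $s_2$, and $t$. I would organize the computation around the two rank-$2$ invariants: the number of points (rank-$2$ flats) and the incidence structure between lines and points.

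First I would record the Whitney numbers of the second kind. Rank $0$ gives $W_0 = 1$ (the element $\hat0 = \bbE^2$). Each edge of $\Phi$ is a distinct hyperplane $h(e)$ (barring two edges with identical endpoints and gain, which would coincide), so the atoms of $[\Latb\Phi]_0^2$ are exactly the $q$ lines and $W_1 = q$. The rank-$2$ flats are the intersection points, and counting them is the combinatorial heart of the proof. Two lines $h(e), h(f)$ meet in a point exactly when $e,f$ are not a parallel pair of distinct gain; the number of such non-parallel pairs is the number of unordered pairs of distinct vertex-pairs weighted by multiplicities, which is precisely $s_2$ by the identity $q^2 = \sum_{i<j} m_{ij}^2 + 2 s_2$. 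Concurrences must then be subtracted: by Lemma \ref{L2} each balanced triangle makes its three lines pass through a single point, collapsing three pairs into one point. I would argue, from Theorem \ref{T1}, that generically these triple points are the only multiple points, since a balanced circle of length $\ell+1 \ge 4$ has $c = n-\ell$ and would force a concurrence of codimension $\ell \ge 3 > d$, hence a void intersection. So there are exactly $t$ triple points and $s_2 - 3t$ ordinary double points, whence $W_2 = s_2 - 2t$.

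Next I would compute the Whitney numbers of the first kind from the M\"obius function of the geometric semilattice. Immediately $w_0 = 1$ and $w_1 = -W_1 = -q$. For a rank-$2$ flat $P$ lying over $\ell_P$ atoms one has $\mu(\hat0, P) = \ell_P - 1$, so summing over points gives $w_2 = (s_2 - 3t)\cdot 1 + t\cdot 2 = s_2 - t$; likewise $\mu(x,P) = -1$ for each incident atom $x < P$, so $w_{12} = -\sum_P \ell_P = -(2 s_2 - 3t)$, while $w_{11} = W_1 = q$. Substituting into the formulas of Section \ref{arrs} yields all the entries: $f_2 = |w_0| + |w_1| + |w_2| = 1 + q + (s_2 - t)$; $f_1 = |w_{11}| + |w_{12}| = q + (2s_2 - 3t)$; $b_2 = |w_0 + w_1 + w_2| = 1 + s_2 - q - t$ (positive generically, a $0$-flat being present); and $b_1 = |w_{11} + w_{12}| = 2 s_2 - q - 3t$. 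The flat counts are $W_1 = q$ lines and $W_2 = s_2 - 2t$ points.

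The main obstacle is the point count together with the verification that the only coincidences among the lines are the triple points forced by balanced triangles; this is exactly where genericity (Theorem \ref{T1}) is indispensable, both to rule out accidental concurrences and, more subtly, to show via the rank/codimension bookkeeping above that no balanced circle longer than a triangle contributes a point. The remaining steps---the elementary symmetric identity for $s_2$, the local M\"obius values $\ell_P - 1$ and $-1$, and the substitutions---are routine. Note that the zero-gain triangle count $t_0$ plays no role here; it is the relevant quantity only in the non-Pythagorean analog of Section \ref{nonpyth}, where balance in $\Psi$ restricts concurrences to circles inside $Z$.
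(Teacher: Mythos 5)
Correct, and essentially the paper's own proof: the paper derives this corollary by combining Theorems \ref{T1} and \ref{Tenum} with Lemma \ref{Lwh}, and your inline computation of the Whitney numbers of $\Latb\Phi$ (two kinds of rank-$2$ balanced flats, namely $t$ balanced triangles with $\mu(\hat0,P)=2$ and $s_2-3t$ nonparallel pairs with $\mu(\hat0,P)=1$, plus the genericity argument from Theorem \ref{T1} that no balanced circle longer than a triangle can force a concurrence when $d=2$) is exactly the content of that lemma. Two small bonuses of your version: your signed value $w_{12}=-(2s_2-3t)$ is the one consistent with the sign rule $\sgn w_{ij}=(-1)^{j-i}$ and is what actually makes $b_1=|w_{11}+w_{12}|=2s_2-q-3t$ come out (Lemma \ref{Lwh} prints $w_{12}$ without the sign), and your flat counts of $W_1=q$ lines and $W_2=s_2-2t$ points are the values the paper's own formula $a_k=W_{d-k}$ yields, quietly correcting the corollary's printed lines ``$a_2=q$'' and ``$a_1=s_2$'', which should read $a_1=q$ and $a_0=s_2-2t$.
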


\begin{cor} \mylabel{T8} 
Let $\alpha$ be a nonzero real number.  If $Q_1, \hdots, Q_n$
are generic in $\bbE^2$, then the arrangement of
perpendicular lines $\cH(\alpha, \Phi ; Q_1, \hdots, Q_n)$ has numbers as in
Corollary \ref{T7} with $t$ replaced by $t_0$.  
\end{cor}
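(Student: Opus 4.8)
The plan is to observe that $\cH(\alpha,\Phi;\bQ)$ is itself a Pythagorean arrangement and then to apply Corollary \ref{T7} to it. By the definitions in Section \ref{gg}, we have $\cH(\alpha,\Phi;\bQ) = \cH(\Psi;\bQ)$, where $\Psi$ is the Pythagorean gain graph with gains $\psi(e;i,j) = \phi(e;i,j)\,d(Q_i,Q_j)^\alpha$ as in \eqref{Epsi}. Since $\Psi$ has exactly the same underlying multigraph as $\Phi$—the same edges and the same multiplicities $m_{ij}$—the quantities $q$ and $s_2$, which depend only on the $m_{ij}$, are identical for $\Psi$ and for $\Phi$. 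Thus the sole input to the formulas of Corollary \ref{T7} that can differ is the number of balanced triangles, which for the arrangement $\cH(\Psi;\bQ)$ must be computed in $\Psi$ rather than in $\Phi$.

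First I would count the balanced triangles of $\Psi$ for generic $\bQ$. A triangle on $\{i,j,k\}$ is balanced in $\Psi$ exactly when
$$
\phi(e_{ij};i,j)\,d_{ij}^\alpha + \phi(e_{jk};j,k)\,d_{jk}^\alpha + \phi(e_{ki};k,i)\,d_{ki}^\alpha = 0 .
$$
When all three gains $\phi$ vanish this holds automatically, so every one of the $t_0$ zero-gain triangles is balanced in $\Psi$. Conversely, if some gain is nonzero, then by the genericity already established in the proof of Theorem \ref{T2}—where a slight perturbation of $\bQ$ prevents any circle outside $Z$ from summing to zero in $\Psi$ (abstractly, one replaces the nonzero gains of $\Psi$ by values linearly independent over $\bbQ$)—the displayed sum is nonzero for generic $\bQ$. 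Hence the balanced triangles of $\Psi$ are precisely the $t_0$ zero-gain triangles of $\Phi$.

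Combining these observations, I would apply Corollary \ref{T7} to the Pythagorean arrangement $\cH(\Psi;\bQ) = \cH(\alpha,\Phi;\bQ)$. The reference points are generic with respect to $\Psi$ (which is exactly the generic intersection structure delivered by Theorem \ref{T2} and Corollary \ref{C2}), the values $q$ and $s_2$ are unchanged, and the triangle count is $t_0$. Every formula of Corollary \ref{T7} therefore holds verbatim with $t$ replaced by $t_0$, which is precisely the assertion of Corollary \ref{T8}.

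The one genuinely substantive step is the genericity claim that no triangle carrying a nonzero gain becomes accidentally balanced in $\Psi$. This is the only place where the hypothesis $\alpha \neq 0$ enters—it guarantees that $d_{ij}^\alpha$ is a nonconstant, generically varying function of the interpoint distances, so that the three weighted gains cannot cancel for generic $\bQ$—and it is exactly the perturbation already justified for Theorem \ref{T2}. Once this is granted, no further difficulty arises: everything else is the black-box application of Corollary \ref{T7} to $\Psi$.
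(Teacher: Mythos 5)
Your proposal is correct and matches the paper's own route: the paper proves Corollary \ref{T8} by combining Theorem \ref{T2} with Theorem \ref{Tenum} (via Lemma \ref{Lwh}), which is exactly what you do, merely packaged through Corollary \ref{T7} applied to the Pythagorean gain graph $\Psi$ of \eqref{Epsi}. Your key observation---that the genericity established in the proof of Theorem \ref{T2} makes the balanced triangles of $\Psi$ precisely the $t_0$ zero-gain triangles, while $q$ and $s_2$ are unchanged---is the same substance as the paper's appeal to Theorem \ref{T2}.
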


\begin{proof} Corollary \ref{T7} is obtained from Theorems \ref{T1} and \ref{Tenum} by way of Lemma \ref{Lwh}.  Corollary \ref{T8} follows similarly from Theorems \ref{T2} and \ref{Tenum}.
\end{proof}

Examples \ref{Xbal4}, \ref{Xcontra3}, and \ref{Xcontra03} illustrate these corollaries.

\begin{lem} \mylabel{Lwh}
Suppose a finite gain graph $\Phi$ has $q$ edges, all of them links (two distinct endpoints), and $t$ balanced triangles and $s_2$ is the second elementary symmetric function of the edge multiplicities.  Then in $\Latb \Phi$ the first few Whitney numbers are
\begin{alignat*}{4}
w_0 =\ &w_{00} = W_0 = 1, &\qquad &w_1 = w_{01} = -q, &\qquad w_2 = w_{02} = s_2 - t, \\
&w_{11} = W_{1} = q, &\qquad &w_{22} = W_2 = s_2 - 2t, &\qquad w_{12} = 2s_2 - 3t.
\end{alignat*}
If $t=0$ and there are $F_{n-3}$ forests with three edges and $t'$ balanced quadrilaterals, then
\begin{alignat*}{3}
w_3 = w_{03} &= -(F_{n-3}-t'), &\qquad  &w_{13} = F_{n-3}-4t', \\
w_{23} &= -(F_{n-3}-6t'), &\qquad W_3 =\ &w_{33} = F_{n-3}-3t'.
\end{alignat*}
\end{lem}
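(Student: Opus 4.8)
The plan is to read the Whitney numbers directly off an explicit description of the flats of $\Latb\Phi$ in ranks $0$ through $3$. Recall that on a balanced edge set $S$ the rank in $L_0(\Phi)$ is $n-c(S)$, that a balanced flat is a balanced closed edge set, and (by Lemma \ref{Lww}) that every lower interval $[\hat0,y]$ of a geometric semilattice is a geometric lattice; in such a lattice $\mu(\hat0,y)$ depends only on the pattern of atoms below $y$, and a cover contributes $\mu=-1$. So I would get each $W_j=w_{jj}$ by counting the rank-$j$ flats, each $w_j=w_{0j}$ by summing $\mu(\hat0,y)$ over them, and each $w_{i,i+1}$ as minus the number of (rank-$i$ flat)$<$(rank-$(i+1)$ flat) pairs, since for adjacent ranks every such pair is a cover. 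Ranks $0$ and $1$ are immediate: two edges are parallel only if they form a balanced digon, and there are none, so the atoms are exactly the single edges, giving $W_1=q$ and $w_1=-q$.

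For rank $2$ I would classify the balanced flats with $c(S)=n-2$ into three kinds: a balanced triangle on a vertex triple (three atoms, interval $U_{2,3}$, so $\mu(\hat0,\cdot)=2$); a \emph{cherry}, i.e.\ two adjacent edges not completing a balanced triangle (two atoms, $\mu=1$); and a pair of vertex-disjoint edges (two atoms, $\mu=1$). The two arithmetic facts needed are that $s_2$ equals the number of cherries plus the number of disjoint edge-pairs, and that each balanced triangle absorbs exactly three cherries (one per pair of its edges). These give $W_2=s_2-2t$, and summing $\mu(\hat0,\cdot)$ gives $w_2=2t+(\text{cherry}+\text{disjoint flats})=s_2-t$. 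Counting atom--flat incidences yields $\sum_{\rk y=2}(\text{atoms below }y)=2W_2+t=2s_2-3t$, so by the sign rule $\sgn w_{12}=(-1)^{2-1}$ we obtain $|w_{12}|=2s_2-3t$, reproducing the displayed rank-$2$ entries.

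For rank $3$ (where $t=0$) I would sort the balanced rank-$3$ flats into four kinds. A four-vertex tree, a cherry plus a disjoint edge, and three pairwise-disjoint edges each have three atoms and three rank-$2$ subflats, hence interval $B_3$ with $\mu(\hat0,\cdot)=-1$; a balanced quadrilateral has four atoms and six rank-$2$ subflats (its four adjacent pairs and two opposite pairs), hence interval $U_{3,4}$ with $\mu(\hat0,\cdot)=-3$. The combinatorial bridge is that a three-edge tree fails to be closed precisely when it spans a balanced quadrilateral, and each balanced quadrilateral has exactly four spanning trees; so if $N_3$ denotes the number of ``forest'' rank-$3$ flats then $F_{n-3}=N_3+4t'$, whence $W_3=N_3+t'=F_{n-3}-3t'$ and $w_3=-N_3-3t'=-(F_{n-3}-t')$. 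The mixed numbers $w_{13}$ and $w_{23}$ then follow by summing the interval data ($B_3$ contributes three atoms and three rank-$2$ subflats; $U_{3,4}$ contributes four atoms and six) over all rank-$3$ flats, with signs fixed by $\sgn w_{ij}=(-1)^{j-i}$.

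The main obstacle is the rank-$3$ classification rather than any one calculation. One must check that those four support types exhaust the balanced rank-$3$ flats, verify that a balanced quadrilateral's interval really is $U_{3,4}$ (all six of its edge-pairs being distinct rank-$2$ flats, which uses $t=0$ so that no chord yields a balanced triangle), and justify $F_{n-3}=N_3+4t'$ via the four-spanning-trees observation. A genericity remark is needed to rule out degenerate coincidences, such as two balanced quadrilaterals lying in one rank-$3$ flat, after which the incidence sums assemble into the remaining Whitney numbers of the statement.
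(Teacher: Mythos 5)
Your strategy coincides with the paper's own proof, which is precisely this classification (``two kinds of balanced flat of rank 2: a balanced triangle, and a pair of nonparallel edges not contained in a balanced triangle\dots\ A similar remark applies to the calculation of $w_{i3}$''), and your execution is correct through rank $2$ (including reading the displayed $w_{12}=2s_2-3t$ as a magnitude, since strictly $w_{12}=-(2s_2-3t)$) and for $w_3$ and $W_3$ at rank $3$. The genuine gap is the one step you wave at instead of performing: assembling $w_{13}$ and $w_{23}$ from ``the interval data.'' For $w_{13}$ the ranks are not adjacent, so the summand is $\mu(x,y)$, not an atom count: on a quadrilateral flat the upper interval over an atom is $U_{2,3}$, so each of the four atoms carries $\mu(x,y)=2$ and the flat contributes $8$, not ``four atoms.''  Carrying out the sums with your own (correct) classification gives $w_{13}=3N_3+8t'=3F_{n-3}-4t'$ and $w_{23}=-(3N_3+6t')=-(3F_{n-3}-6t')$, where $N_3=F_{n-3}-4t'$ is the number of closed $3$-edge forests. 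These do \emph{not} match the displayed $F_{n-3}-4t'$ and $-(F_{n-3}-6t')$, so your claim that the summation ``reproduces'' the statement is false as written --- and it conceals the actual finding, namely that the displayed mixed entries are off by a factor of $3$ on $F_{n-3}$ (evidently a typo in the statement), which your method would have exposed had you finished the computation.

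That the $3F_{n-3}$ versions are the true values is easy to certify. Take $\Phi$ to be a single balanced quadrilateral ($t=0$, $q=4$, $F_{n-3}=4$, $t'=1$, so $\Latb\Phi\cong\Lat U_{3,4}$): then $w_{13}=4\cdot 2=8=3F_{n-3}-4t'$ and $w_{23}=-6=-(3F_{n-3}-6t')$, whereas the displayed formulas yield $0$ and $+2$, the latter even violating $\sgn w_{ij}=(-1)^{j-i}$; alternatively, for contrabalanced $\Phi$ (so $t'=0$) Corollary \ref{T5} forces $|w_{13}|=\binom{3}{1}F_{n-3}$ and $|w_{23}|=\binom{3}{2}F_{n-3}$, the binomial multiples of the forest numbers. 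Two lesser repairs: your closing appeal to ``genericity'' is misplaced, since $\Latb\Phi$ is a purely combinatorial object --- two distinct balanced quadrilaterals in one rank-$3$ flat are impossible outright, because a rank-$3$ balanced set containing both would have a support component with four vertices and at least five edges, forcing a balanced digon or a balanced triangle, both excluded; and your rank-$1$ step ($W_1=q$) silently assumes there are no balanced digons (parallel edges with equal gains), which is indeed the implicit hypothesis the lemma needs but ought to be stated.
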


\begin{proof} 
The essential fact for $w_{i2}$ is that there are two kinds of balanced flat of rank 2: a balanced triangle, and a pair of nonparallel edges not contained in a balanced triangle.  There are $t$ of the former type and $s_2 - 3t$ of the latter.  A similar remark applies to the calculation of $w_{i3}$.
\end{proof}


\section{Enumeration in examples} \mylabel{enum}

We now have the machinery to do Pythagorean arrangements of special kinds.  
Here we describe some abstractly; then in the next section we see how they
may arise from various models of voter preference.  Throughout, $\Phi$ is a real, additive gain graph with $n$ vertices and $\bQ = (Q_1,\hdots,Q_n) \in (\bbE^d)^n$.

\subsection{Perpendicular bisectors, power-diagram arrangements, and other balanced gain graphs.} \mylabel{X1}
Suppose the arrangement $\cH$ consists of one perpendicular hyperplane for each line $Q_iQ_j$, positioned according to a balanced Pythagorean gain graph $\Phi$.  We obtain the theorem of Good and Tideman as well as generalizations to arrangements based on power diagrams (see Section \ref{switch}) and to a voter with prior biases (as at the end of Section \ref{voter}).

\begin{cor} \mylabel{T3} 
Let $\Phi$ be a balanced, complete Pythagorean gain graph on $n$ vertices.  Let $\cH$ be the Pythagorean arrangement $\cH(\Phi;\bQ)$.  Generically, the
numbers of $k$-dimensional flats, faces, and bounded faces of $\cH$ are, for regions,
\begin{equation*}
\begin{aligned}
f_d(\cH) &= {{\sum_{i=0}^d}} |s(n,n-i)|, \qquad
b_d(\cH) &= (-1)^d {{\sum_{i=0}^d}} s(n, n-i), 
\end{aligned}
\end{equation*}
and for the rest,
\begin{equation*}
\begin{aligned}
a_k(\cH) &= S(n,n-d+k), \\
f_k(\cH) &= S(n,n-d+k) {{\sum^d_{j=d-k}}} |s(n-d+k, n-j)|, \\
b_k(\cH) &= (-1)^k S(n,n-d+k) {{\sum^d_{j=d-k}}} s(n-d+k, n-j) . 
\end{aligned}
\end{equation*}
\end{cor}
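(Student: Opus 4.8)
The entire statement reduces to identifying one partition lattice and feeding its Whitney numbers into Theorem~\ref{Tenum}. First I would pin down the relevant semilattice. Because $\Phi$ is balanced, \emph{every} edge subset is balanced (any circle in a subgraph is already a circle of $\Phi$), so in the complete lift matroid $L_0(\Phi)$ the balanced sets carry exactly the rank function $\rk(S)=n-c(S)$ of the graphic matroid $G(\Gamma)$ of the underlying graph $\Gamma$. Since $\Phi$ is complete, $\Gamma=K_n$, whence $\Latb\Phi\cong\Lat G(K_n)=\Pi_n$, the lattice of partitions of $\{1,\dots,n\}$ ordered by refinement. By Theorem~\ref{T1} and Corollary~\ref{C1}, for generic $\bQ$ one has $\cL(\cH)\cong[\Latb\Phi]_0^d=[\Pi_n]_0^d$, with $\Psi=\Phi$ in the notation of Theorem~\ref{Tenum} because $\alpha=0$.

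Next I would record the Whitney numbers of $\Pi_n$. The singly indexed ones are classical: $w_i(\Pi_n)=s(n,n-i)$ and $W_i(\Pi_n)=S(n,n-i)$ (\cite{F-Rota}, \cite[\S9]{FCT}). For the doubly indexed numbers I would exploit the interval structure: the upper interval above a rank-$i$ partition (one with $n-i$ blocks) is again a partition lattice $\Pi_{n-i}$, and there are $W_i(\Pi_n)=S(n,n-i)$ such partitions, so summing $w_{j-i}(\Pi_{n-i})=s(n-i,n-j)$ over them gives
$$
w_{ij}(\Pi_n)=S(n,n-i)\,s(n-i,n-j).
$$
An essential remark is that, for indices $i,j\le d$, these numbers are unchanged on passing from $\Pi_n$ to the restricted semilattice $[\Pi_n]_0^d$, since each $\mu(x,y)$ with $\rk x=i$, $\rk y=j\le d$ is computed in the interval $[x,y]$, which lies wholly in ranks $\le d$.

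Now I would assemble the formulas. The flat count is immediate: $a_k(\cH)=W_{d-k}(\Latb\Phi)=S(n,n-d+k)$. For $f_d$ and $b_d$ I would use the single-index formulas \eqref{first-num-def}, obtaining $f_d(\cH)=\sum_{i=0}^d|w_i|=\sum_{i=0}^d|s(n,n-i)|$ and $b_d(\cH)=|\sum_{i=0}^d w_i|=(-1)^d\sum_{i=0}^d s(n,n-i)$, the last equality holding because $\cH$ has $0$-flats (rank $d$ is attained, as $n>d$). For $f_k$ I would substitute the doubly indexed formula into \eqref{face-num-def}; since $S(n,n-d+k)\ge0$ it factors out of every absolute value, yielding $f_k(\cH)=S(n,n-d+k)\sum_{j=d-k}^d|s(n-d+k,n-j)|$, exactly as claimed.

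The one genuinely delicate point, which I expect to be the main obstacle, is the sign in the formula for $b_k$. Here \eqref{face-num-def} gives $b_k(\cH)=|\sum_{j=d-k}^d w_{d-k,j}|$, and after factoring out $S(n,n-d+k)$ I must show that $(-1)^k\sum_{j=d-k}^d s(n-d+k,n-j)\ge0$, i.e.\ that the partial Stirling sum carries the sign $(-1)^k$. Rather than wrestle with Stirling numbers directly, I would read the sign off the geometry via Theorem~\ref{Tinduced}. Grouping the sum over rank-$(d-k)$ flats,
$$
(-1)^k\sum_{j=d-k}^d w_{d-k,j}(\cL(\cH))=\sum_{s:\,\dim s=k}(-1)^k\sum_{i=0}^k w_i\big(\cL(\cH^s)\big),
$$
each inner sum is the signed Whitney sum of the induced arrangement $\cH^s$ inside the $k$-flat $s$. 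By Theorem~\ref{Tinduced}, $\cH^s=\cH(\Phi^{\eta_s}/E(s);\bar Q_1,\dots,\bar Q_{n-d+k})$, and $\Phi^{\eta_s}/E(s)$ is again balanced and complete on $n-d+k\ge k+1$ vertices; hence $\cH^s$ has $0$-flats and $(-1)^k\sum_{i=0}^k w_i(\cL(\cH^s))=b_k(\cH^s)\ge0$. This simultaneously settles the sign and exhibits $b_k(\cH)=\sum_s b_k(\cH^s)=S(n,n-d+k)\,b_k(\cH^{s_0})$, which matches the stated formula once the $b_d$ computation is applied in dimension $k$ with $n-d+k$ reference points. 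Everything else is bookkeeping with the known Whitney numbers of $\Pi_n$.
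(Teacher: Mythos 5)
Your proposal is correct and follows essentially the same route as the paper: identify $\Latb\Phi=\Lat G(K_n)=\Pi_n$ (every edge set being balanced), plug the Stirling-number Whitney numbers $w_{ij}(\Pi_n)=S(n,n-i)\,s(n-i,n-j)$ into Theorem~\ref{Tenum}, and read off the flat and face counts. Your two supplementary verifications---that truncation to $[\Pi_n]_0^d$ leaves the relevant $w_{ij}$ unchanged, and the sign of $b_k$ settled via Theorem~\ref{Tinduced} and the induced arrangements in $k$-flats---are sound elaborations of steps the paper leaves to the cited general machinery.
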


\begin{proof} We combine Theorem \ref{Tenum} with standard facts about $\Pi_n$, the set of partitions of $n$ elements ordered by refinement, in which $\rk\pi = n - |\pi|$.  Since every edge set in $\Phi$ is balanced, $\Latb\Phi = \Lat G(K_n) = \Pi_n$.  The Whitney numbers of the first and second kinds of $\Pi_n$ are the Stirling numbers (see \cite{F-Rota} or \cite[\S 9]{FCT} for the first kind, \cite{RotaP} for the second).  The doubly indexed Whitney numbers are $w_{ij}(\Pi_n) = S(n, n-i) s(n-i, n-j)$.
\end{proof}

The planar numbers result from substituting $q = \binom{n}{2}$, $s_2 = \binom{q}{2} = 3 \binom{n+1}{4}$, and $t = \binom{n}{3}$ in Corollary \ref{T7}.

If $\Phi$ is balanced but incomplete, the conclusion is more complicated.
A partial result is easiest to state.  It can be understood as an application of Theorem \ref{Tinvarp}.

\begin{cor} \mylabel{T4}
Let $\Phi$ be a balanced Pythagorean gain graph on $n$ vertices and let $\chi_\Gamma(\lambda)$ be the chromatic polynomial of the underlying graph $\Gamma$.  Generically, $p_{\cH(\Phi;\bQ)}(\lambda)$ is the polynomial part of $\chi(\lambda)/\lambda^{n-d}$.  
The generic numbers of regions and bounded regions of $\cH(\Phi;\bQ)$ are
\begin{enumerate}
\item[] $f_d =$ the sum of magnitudes of the $d+1$ leading coefficients of
$\chi_\Gamma(\lambda)$, and
\item[] $b_d =$ the magnitude of the sum of the $d+1$ leading coefficients.
\end{enumerate}
\end{cor}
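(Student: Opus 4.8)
The plan is to deduce everything from Theorem \ref{Tinvarp} by identifying the balanced chromatic polynomial of a balanced gain graph with the ordinary chromatic polynomial of its underlying graph. First I would observe that when $\Phi$ is balanced every circle has zero gain-sum, so every subgraph—and in particular every edge subset—is balanced as well. Consequently, in the complete lift matroid $L_0(\Phi)$ the rank correction $\epsilon$ is always $0$ on edge sets, so $\rk(S)=n-c(S)$ agrees with the graphic rank, and the balanced flats are exactly the flats of the graphic matroid $G(\Gamma)$. Thus $\Latb\Phi\cong\Lat G(\Gamma)$. Comparing the definition $\chi^\textb_\Phi(\lambda)=\sum_{j=0}^{n} w_j(\Latb\Phi)\lambda^{n-j}$ with the standard Whitney-number expansion $\chi_\Gamma(\lambda)=\sum_j w_j(\Lat G(\Gamma))\lambda^{n-j}$ of the chromatic polynomial, I would conclude $\chi^\textb_\Phi=\chi_\Gamma$. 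The first assertion of the corollary—that $p_{\cH(\Phi;\bQ)}(\lambda)$ is the polynomial part of $\chi_\Gamma(\lambda)/\lambda^{n-d}$—then follows at once from Theorem \ref{Tinvarp}.

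Next I would read off the Whitney numbers $w_i(\cH)$ from this identity. Writing $\chi_\Gamma(\lambda)=\sum_{j\ge 0} a_j\lambda^{n-j}$ with $a_0=1$, $a_1=-q$, and so on, division by $\lambda^{n-d}$ produces $\sum_j a_j\lambda^{d-j}$, whose polynomial part retains exactly the terms with $j\le d$; hence $p_\cH(\lambda)=\sum_{j=0}^d a_j\lambda^{d-j}$. Matching this against $p_\cH(\lambda)=\sum_{j=0}^d w_j(\cH)\lambda^{d-j}$ gives $w_j(\cH)=a_j$ for $0\le j\le d$, and these $a_0,\dots,a_d$ are precisely the $d+1$ leading coefficients of $\chi_\Gamma$.

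Finally I would invoke the region counts \eqref{first-num-def}, namely $f_d(\cH)=\sum_{i=0}^d|w_i(\cH)|$ and $b_d(\cH)=|\sum_{i=0}^d w_i(\cH)|$. Substituting $w_i(\cH)=a_i$ yields $f_d=\sum_{i=0}^d|a_i|$ and $b_d=|\sum_{i=0}^d a_i|$, which are exactly the sum of the magnitudes, and the magnitude of the sum, of the $d+1$ leading chromatic coefficients, as claimed. The only step demanding genuine care is the matroidal identification $\Latb\Phi\cong\Lat G(\Gamma)$: I must verify that balance of $\Phi$ forces $\epsilon=0$ on every edge set, so that balanced flats biject with graphic flats and $\chi^\textb_\Phi$ collapses to $\chi_\Gamma$. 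Once that identification is secure, the remaining work is pure coefficient bookkeeping with the already-established Theorem \ref{Tinvarp} and the enumeration formulas of Section \ref{arrs}.
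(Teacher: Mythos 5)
Your proposal is correct and follows essentially the same route as the paper's own (very terse) proof: balance of $\Phi$ gives $\Latb\Phi = \Lat G(\Gamma)$, Rota's expansion $\chi_\Gamma(\lambda) = \sum_i w_i(\Lat\Gamma)\lambda^{n-i}$ identifies $\chi^\textb_\Phi$ with $\chi_\Gamma$, and the rest is Theorem \ref{Tinvarp} plus the face-count formulas of Section \ref{arrs}. You merely spell out the matroid-rank verification ($\epsilon \equiv 0$ on edge sets) and the coefficient bookkeeping that the paper leaves implicit.
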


\begin{proof} Rota \cite[\S 9]{FCT} proved that 
$\chi_\Gamma(\lambda) = \sum w_i (\Lat\Gamma)\, \lambda^{n-i}$.  We know $\Lat \Gamma = \Latb \Phi$ from balance of $\Phi$.
\end{proof}

In the planar case we may again resort to Corollary \ref{T7}, as when $\Phi$ was complete: still $s_2 = \binom{q}{2}$, but now $q$ and $t$ depend on $\Phi$.

\begin{figure} \mylabel{Fbal4}
\vbox to 3.7truein{}
\begin{center}
\includegraphics{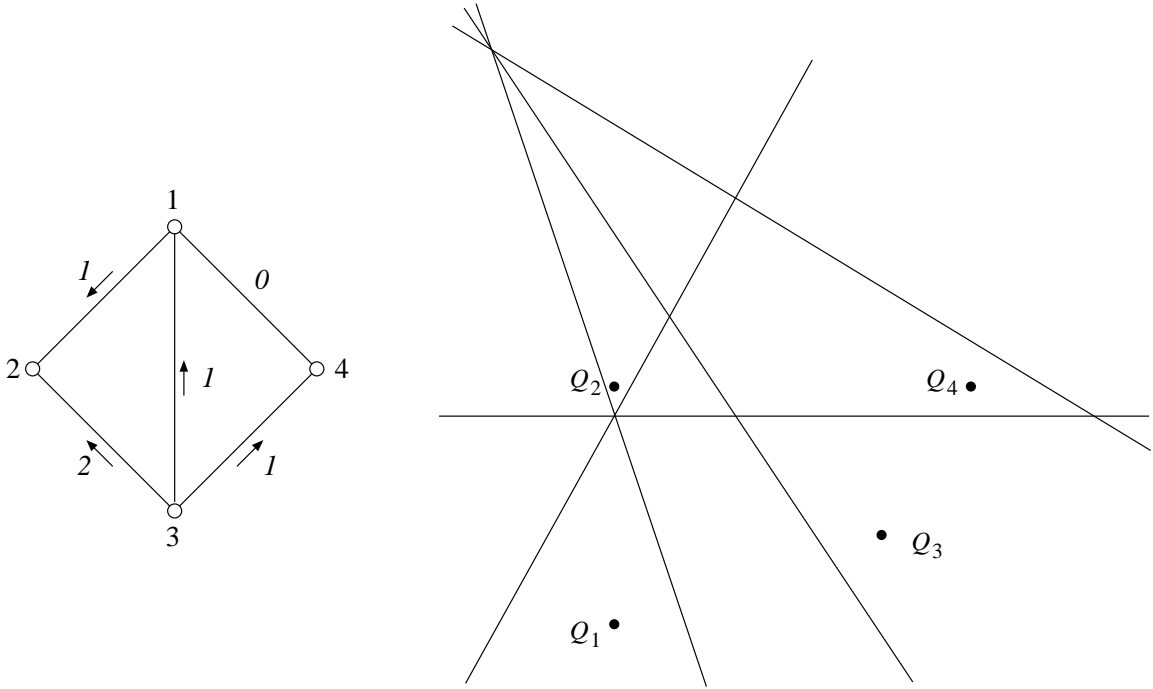}
\end{center}
\caption{A balanced gain graph and a corresponding generic line arrangement.  (Example \ref{Xbal4}.)}
\end{figure}

\begin{exam} \mylabel{Xbal4}
In Figure \ref{Fbal4} we see a balanced gain graph and a corresponding generic Pythagorean arrangement of lines.  Since $\Phi$ is balanced, $\chi_\Phi(\lambda) = \chi_\Gamma(\lambda) = \lambda(\lambda-1)(\lambda-2)^2$, $\Gamma$ being the underlying graph of $\Phi$.  By Theorem \ref{Tinvarp} therefore $p_\cH(\lambda)$ is the polynomial part of $\lambda^{-2}(\lambda^4-5\lambda^3+8\lambda^2-4\lambda)$, so that $p_\cH(\lambda) = \lambda^2-5\lambda+8$.  It follows that the number of regions is $|p_\cH(-1)| = 14$ and the number of bounded regions is $|p_\cH(1)| = 4$, in agreement with the diagram.
\placefigure{bal4}
\end{exam}

\subsection{Generic hyperplanes and non-Pythagorean rules.} \mylabel{X2}
(\emph{Forests.})
Suppose $\Phi$ is a Pythagorean gain graph in which no circle is balanced, as would (almost surely) happen if the gains were chosen at random, or suppose $(\alpha,\Phi)$ is a non-Pythagorean descriptor (i.e., $\alpha \neq 0$) in which the zero-gain edges of $\Phi$ contain no circle.  These two structures have the same formulas.  In either case there are no balanced circles in the appropriate biased
graph, so the balanced flats of the lift are precisely the spanning
forests of the underlying graph $\Gamma$.  Let $F_{l}(\Gamma)$ denote
the number of spanning forests with $l$ components.

\begin{cor} \mylabel{T5}
Let $\Phi$ be a Pythagorean gain graph without balanced circles and let 
$\alpha = 0$, or let $(\alpha,\Phi)$ be a non-Pythagorean descriptor without
zero-gain circles.  Let $\Gamma$ be the underlying graph.  Generically, $p_{\cH(\alpha,\Phi;\bQ)}(\lambda) = \sum_{i=0}^d (-1)^i F_{n-i}(\Gamma) \lambda^{d-i}$.
The numbers
of flats and faces of $\cH(\alpha,\Phi;\bQ)$ when $\bQ$ is generic are
\begin{equation*}
\begin{aligned}
f_d &= {{\sum^d_{i=0}}} F_{n-i}(\Gamma), \qquad 
    b_d = {{\sum^d_{i=0}}} (-1)^{d-i} F_{n-i}(\Gamma), \\
a_k &= F_{n-d+k} (\Gamma), \qquad \
    f_k = {{\sum^d_{i=d-k}}} {\binom{i}{d-k}} F_{n-i}(\Gamma), \\
b_k &= {{\sum^d_{i=d-k}}} (-1)^{d-i} {\binom{i}{d-k}} F_{n-i}(\Gamma) 
    \qquad \text{ (except when } k = d - n + c(\Phi) > 0).
\end{aligned}
\end{equation*}
\end{cor}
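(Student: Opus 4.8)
The plan is to reduce everything to the structure of the balanced-flat semilattice and then feed the resulting Whitney numbers into the general enumeration of Theorem \ref{Tenum} (and its polynomial form, Theorems \ref{Tinvarp} and \ref{Tinvarnp}). First I would observe that the two hypotheses are designed to yield one and the same combinatorial object. When $\alpha = 0$ and $\Phi$ has no balanced circle, Corollary \ref{C1} (via Theorem \ref{T1}) gives $\Lat\cH(\Phi;\bQ) \cong [\Latb\Psi]^d_0$ generically with $\Psi = \Phi$; when $\alpha \neq 0$ and the zero-gain edges carry no circle, Corollary \ref{C2} gives $\Lat\cH(\alpha,\Phi;\bQ) \cong [\Latb(\Gamma,\cZ)]^d_0$ with $\cZ = \eset$. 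In either case the relevant biased graph has an empty family of balanced circles, so ``balanced'' is synonymous with ``contains no circle,'' i.e.\ ``forest.'' Thus the balanced-flat semilattice is the poset of forests of $\Gamma$ ordered by inclusion: a forest with $m$ edges is a balanced flat of rank $n-c=m$, and it is closed, since adjoining any further edge either enlarges the forest or creates an (unbalanced) circle, and adjoining $e_0$ raises the rank as well.

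The engine of the computation is that every interval of this semilattice is Boolean. If $x \subseteq y$ are forests with $\rk x = i$ and $\rk y = j$, then every edge set between them is again a forest, so $[x,y]$ is the Boolean lattice on the $j-i$ edges of $y \setminus x$; hence $\mu(x,y) = (-1)^{j-i}$. Each forest $y$ with $j$ edges has exactly $\binom{j}{i}$ sub-forests with $i$ edges, and there are $F_{n-j}(\Gamma)$ forests with $j$ edges, so
$$
w_{ij}(\Latb\Psi) = (-1)^{j-i}\binom{j}{i}F_{n-j}(\Gamma), \qquad W_i = F_{n-i}(\Gamma), \qquad w_i = (-1)^i F_{n-i}(\Gamma).
$$
The identity $a_k = W_{d-k} = F_{n-d+k}(\Gamma)$ is then immediate. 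For the characteristic polynomial one has $\chi^\textb_\Psi(\lambda) = \sum_j (-1)^j F_{n-j}(\Gamma)\lambda^{n-j}$, and taking the polynomial part of $\chi^\textb_\Psi(\lambda)/\lambda^{n-d}$ (Theorems \ref{Tinvarp}, \ref{Tinvarnp}) retains exactly the terms with $j \leq d$, giving the stated $p_{\cH}(\lambda)$; evaluating $(-1)^d p_\cH(-1)$ and $(-1)^d p_\cH(1)$ yields $f_d$ and $b_d$.

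For the remaining faces I would substitute the $w_{ij}$ above into the face formulas of Theorem \ref{Tenum}. Since $|w_{d-k,j}| = \binom{j}{d-k}F_{n-j}(\Gamma)$, the formula $f_k = \sum_{j=d-k}^d |w_{d-k,j}|$ collapses directly to $f_k = \sum_{i=d-k}^d \binom{i}{d-k}F_{n-i}(\Gamma)$. For $b_k$ I would resolve the sign of $\sum_{j=d-k}^d w_{d-k,j}$ using $\sgn w_{ij} = (-1)^{j-i}$: an overall factor $(-1)^k$ factors out and cancels against the absolute value, leaving the alternating form $b_k = \sum_{i=d-k}^d (-1)^{d-i}\binom{i}{d-k}F_{n-i}(\Gamma)$.

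The one step I expect to require real care is the exceptional case flagged in the statement, $k = d - n + c(\Phi) > 0$. Here $d-k = n-c(\Phi) = \rk\Latb\Psi$ is the top rank, so the arrangement has no flat of dimension below $d-n+c(\Phi)$; in particular it has no $0$-flats. The formula $b_k = |\sum_{j} w_{d-k,j}|$ of Theorem \ref{Tenum} remains valid, but its passage to the clean alternating sum—exactly as with the parenthetical caveat after \eqref{first-num-def}, that $b_d = (-1)^d\sum_0^d w_i$ holds only when $\cE$ has a $0$-flat—breaks down precisely when points of the appropriate dimension are absent. At that single value of $k$ the tidy signed formula must be replaced by the raw $|\sum_j w_{d-k,j}|$; every other case goes through verbatim. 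This sign-and-boundary bookkeeping, rather than the Boolean-interval calculation, is the genuinely delicate part.
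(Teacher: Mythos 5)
Your proposal follows essentially the same route as the paper's outline: identify the generic intersection semilattice with $[\Latb\Psi]_0^d$ (via Corollary \ref{C1} when $\alpha=0$ and Corollary \ref{C2} when $\alpha\neq0$), observe that with no balanced circles the balanced flats are exactly the forests of $\Gamma$, compute the doubly indexed Whitney numbers of the forest semilattice, and feed them into Theorem \ref{Tenum} and Theorem \ref{Tinvarnp}. The one place you diverge is a point in your favor: the paper simply cites the fact that $w_{ij}(\Latb\Psi) = (-1)^{j-i}\binom{j}{i}F_{n-j}(\Gamma)$ to \cite[Thm.\ 7]{BGLF} and \cite[Ex.\ III.5.4]{BG}, whereas you prove it directly by noting that every forest is a closed balanced set and every interval $[x,y]$ between forests is Boolean, so $\mu(x,y)=(-1)^{j-i}$. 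That Boolean-interval argument is correct (subsets of forests are forests; a forest is closed in $L_0$ because adjoining any edge either joins components or creates an unbalanced circle, and either way raises the rank), and it makes the corollary self-contained.

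However, your final paragraph about the exceptional case $k = d-n+c(\Phi) > 0$ contains a genuine error. You assert that there the absolute-value formula $b_k = |\sum_j w_{d-k,j}|$ of Theorem \ref{Tenum} ``remains valid'' and only the passage to the tidy alternating form breaks down. In fact both forms fail at that $k$. Since $d-k = n-c(\Phi)$ is the top rank of $\Latb\Psi$, the only nonzero term in the sum is $w_{d-k,d-k} = W_{d-k} = F_{c(\Phi)}(\Gamma)$, the number of maximal spanning forests, so $|\sum_j w_{d-k,j}| = F_{c(\Phi)}(\Gamma) > 0$; yet the true count is $b_k = 0$, because each $k$-flat is a minimal flat whose induced arrangement is empty, so its unique $k$-face is the whole (unbounded, since $k>0$) flat. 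A concrete instance: $n=3$, $d=2$, $\Phi$ a single edge $e_{12}$ plus an isolated vertex, giving one line in $\bbE^2$; here $k=1$, the formula yields $1$, but there is no bounded $1$-face. The real source of the exception is the same essentialness hypothesis lurking in the caveat after \eqref{first-num-def}: the bounded-face formulas count \emph{relatively} bounded faces, which coincide with bounded faces only when $0$-flats (of the appropriate induced arrangements) exist. Since the corollary explicitly excludes that value of $k$, your proof of the asserted formulas is unaffected; but the explanatory remark, as written, is false and should be corrected.
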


\begin{proof}[Outline of proof]
The Pythagorean gain graph $\Psi$ of $\cH$ (which equals $\Phi$ if $\alpha=0$) is contrabalanced (see \cite[Ex.\ III.3.4]{BG}) so $\chi^\textb_\Psi(\lambda) = \sum (-1)^i F_{n-i}(\Gamma) \lambda^{d-i}$.  Apply Theorem \ref{Tinvarnp} to get $p_\cH(\lambda)$.

The fact lying behind the counts, that the doubly indexed Whitney numbers of $\Latb \Psi$ are binomial multiples of the forest numbers of $\Gamma$, is implicit in \cite[Thm.\ 7]{BGLF} and explicit in \cite[Ex.\ III.5.4]{BG}.  This fact was recently rediscovered in \cite[\S 5]{P-S} in the language of what in \cite[\S IV.4]{BG} we would call the canonical affine hyperplanar lift representation of $\Psi$.
\end{proof}

The first three forest numbers are simple.  If $\Phi$ has $q$ edges, then 
$F_n(\Gamma)=1$, $F_{n-1}(\Gamma)=q$, $F_{n-2}(\Gamma) = \binom{q}{2}$.
One can now write down explicit formulas for $a_k, f_k$, and $b_k$ in
dimensions $d=1$ and $2$.  In $\bbE^2$, $f_2 = 1 + \binom{q+1}{2}$, 
$f_1 = q^2$, $b_0 = f_0 = \binom{q}{2}$, $b_2 = \binom{q-1}{2}$, 
$b_1 = q^2 - 2q$.  (We may obtain these formulas also from Corollary \ref{T8} with $t=t_0=0$.)

\begin{exam} \mylabel{X2a}
\emph{Equally many perpendiculars to each line.}
Take $\Gamma = mK_n$, a complete graph with $m$ edges between each pair of vertices.  Then
\begin{equation*}
F_{n-i}(mK_n) = \frac{m^i}{(n-i)!} {\sum^{n-i}_{k=0}} (-{\tfrac 12})^k
\binom{n-i}{k} \binom{n-1}{i-k} (n-i+k)! n^{i-k}
\end{equation*}
by R{\'e}nyi's formula for $F_{n-i}(K_n)$ \cite{Renyi}.  Even without R{\'e}nyi's formula it is easy to see that 
$$
F_{n-1}(mK_n) = m \tbinom{n}{2}, \quad 
F_{n-2}(mK_n) = 3m^2 \tbinom{n+1}{4}, \quad 
F_{n-3}(mK_n) = m^3 \tbinom{n}{3} \frac{n^2-5n-12}{8}.
$$
\end{exam}

\begin{figure} \mylabel{Fcontra3}
\vbox to 5truein{}
\begin{center}
\includegraphics{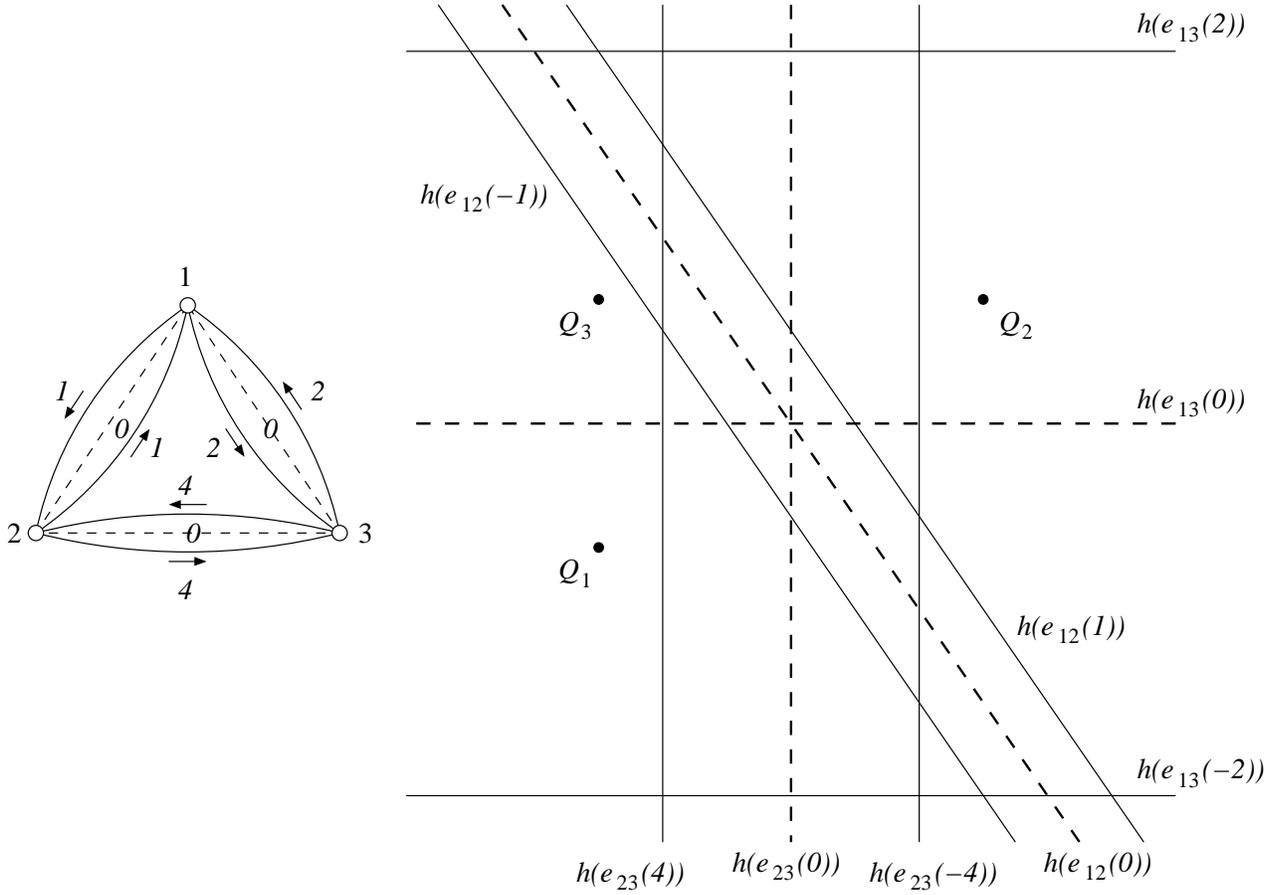}
\end{center}
\caption{Illustration of Examples \ref{Xcontra3} and \ref{Xcontra03}.  The solid edges and lines show generic gains and a corresponding generic Pythagorean line arrangent (Example \ref{Xcontra3}.)  With the dashed lines we have also all $0$-gain edges and, correspondingly, all perpendicular bisectors (Example \ref{Xcontra03}).}
\end{figure}

\begin{exam} \mylabel{Xcontra3}
The solid lines in Figure \ref{Fcontra3} show a gain graph, $\Phi$, without balanced circles and a generic Pythagorean arrangement of lines, $\cH(\Phi;\bQ)$.  Since each two vertices are doubly adjacent, we are in the situation of Example \ref{X2a} with $m=2$.  The characteristic polynomial is $p_\cH(\lambda) = \lambda^2-6\lambda+12$; there are 19 regions, of whcih 7 are bounded.
\placefigure{contra3}
\end{exam}

\subsection{Generic hyperplanes and non-Pythagorean rules, with all bisectors and with equally many perpendiculars to each line.} \mylabel{X3}  
(\emph{Fat forests.})
As in Section \ref{X2} this is two kinds of example in one treatment.  Here $\Phi$ has a complete balanced part as in Section \ref{X1} and a totally unbalanced part as in Section \ref{X2}.  Assume that each pair of vertices is joined by $M+1$ edges. If $\Phi$ is a Pythagorean gain graph, we assume that all balanced circles lie in a balanced, complete spanning subgraph; for instance, they may be the zero-gain edges.  If $\Phi$ is part of a descriptor $(\alpha, \Phi)$ with $\alpha \neq 0$, we assume that the set $Z$ of zero-gain edges forms a complete spanning subgraph $(V,Z)$.  (Thus in either case, as a biased graph \cite{BG}, $\Phi = \bgr{K_n} \cup (MK_n,\eset)$, where $\bgr{K_n}$ is a balanced $K_n$ and $(MK_n,\eset)$ is $K_n$ with each edge replaced by $M$ distinguishable copies of itself and with no balanced circles.)

The semilattice $\Latb \Phi$ is isomorphic to the lower $d$ ranks of the geometric semilattice of \emph{spanning fat forests of $MK_n$}.  A spanning fat forest $(\pi,F)$ consists of a partition $\pi$ of the vertex set together with an edge set $F \subseteq E(MK_n)$ such that, if each block of $\pi$ is collapsed to a point, $F$ contains no circle.  The geometric lattice of fat forests of a graph and related lattices and semilattices will be studied in detail in the anticipated \cite[Ch.\ IV]{GLSP}.  

We mention three results from \cite[Ch.\ IV]{GLSP}.  First, the top Whitney number of $\Latb \Phi$ is 
$$
w_{n-1} = (-1)^{n-1}\; \frac{(n-1)!}{nM}\, \sum_{\mu \vdash n} \frac{(nM)^{\mu_1+\mu_2+\cdots}}{\mu_1!\mu_2!\cdots};
$$
here $\mu \vdash n$ means that $\mu = (\mu_1,\mu_2,\hdots)$ with all $\mu_i \geq 0$ and $\sum j\mu_j = n$.  The other Whitney numbers are given by
$$
w_{n-i} = (-1)^{n-i}\; n! \sum_{\substack{\lambda \vdash n \\  \sum\lambda_k=i}} \prod_{k=1}^\infty \, \frac{1}{\lambda_k!} \left[ - \frac{1}{k^2M} \sum_{\mu \vdash k} \frac{(kM)^{\mu_1+\mu_2+\cdots}}{\mu_1!\mu_2!\cdots} \right]^{\lambda_k}.
$$
These formulas are not among the easiest but they do permit computation of the generic numbers of regions and bounded regions in the hyperplane arrangements $\cH$ of this example.  Second, if $\chi^\textb_{n,M}(\lambda)$ denotes the balanced chromatic polynomial of $\Phi$, then $\{\chi^\textb_{n,M}(\lambda)\}_{n=1}^\infty$ has exponential generating function
$$
-1 + \exp\left( -\frac{\lambda}{M} \sum_{k=1}^\infty \frac{(-z)^k}{k^2} \sum_{\mu\vdash k} \frac{(kM)^{\mu_1+\mu_2+\cdots}}{\mu_1!\mu_2!\cdots} \right).
$$
From this the region and bounded region numbers can, in principle, be obtained by the formulas of Section \ref{invar}.  
Third, the number $W_{n-i}$ of spanning fat forests in $MK_n$ with $i$ connected components (recall that $W_k = a_{d-k}$, the number of flats in $\cH$ of codimension $k$) is given by the same formula as for $w_{n-i}$ except with the signs omitted and an extra factor of $1!^{\mu_2}2!^{\mu_3}\cdots$ in the denominator of the inmost sum.  For instance, the number of spanning fat trees is
$$
W_{n-1} = \frac{n!}{n^2M} \sum_{\mu \vdash n} \prod_{j=1}^{\infty} \frac{(nM)^{\mu_j}}{{\mu_j}!(j-1)!^{\mu_j}}.
$$

We get much simpler evaluations in the planar case by applying  Corollaries \ref{T7} and \ref{T8}, in which $q = (M+1) \binom{n}{2}$, $s_2 = 3 (M+1)^2 \binom{n+1}{4}$, and $t = t_0 = \binom{n}{3}$.

\begin{exam} \mylabel{Xcontra03}
The solid and dashed lines in Figure \ref{Fcontra3} show a gain graph and generic Pythagorean arrangement of lines of the kind in Section \ref{X3}.  The balanced circles lie in the set $Z$ of edges with gain $0$.  Since $q=9$, $s_2=27$, and $t=1$, Corollary \ref{T7} says there are 36 regions of which 18 are bounded.
\end{exam}

\subsection{Symmetric, uniform Pythagorean hyperplanes: odd case.} \mylabel{Xodd}  
(\emph{Composed partitions.})
For positive integers $k$ and $n$, let $\Phi_n = [-k,k]K_n$ be the additive real gain graph on $n$ vertices that has an edge of gain $i$ between each pair of vertices for every $i = 0, \pm1, \hdots, \pm k$.  This gain graph, or any other obtained through multiplying all gains by a positive constant $\delta$, gives the odd number $2k+1$ of perpendiculars to each reference line, placed symmetrically about the bisector and equally spaced, and---in Pythagorean coordinates---identically spaced along all reference lines.

We can easily show that $[-k,k]K_n$ has balanced chromatic polynomial
\begin{equation} \mylabel{Ecompchr}
\chi^\textb_{[-k,k]K_n}(\lambda) = \lambda(\lambda-nk-1)_{n-1},
\end{equation}
where $(x)_r$ is the falling factorial $x(x-1)\cdots(x-r+1)$, whence $C_n(k) = \Latb [-k,k]K_n$ has characteristic polynomial
\begin{equation} \mylabel{Eodd}
p_{C_n(k)}(\lambda) = (\lambda-nk-1)_{n-1}.
\end{equation}
Applying Theorem \ref{Tinvarp} to Equation \eqref{Ecompchr}, in $\bbE^d$ the number of regions, $f_d$, will equal the sum of the magnitudes of the coefficients of $\lambda^{n}, \hdots, \lambda^{n-d}$ in $\chi^\textb_{[-k,k]K_n}(\lambda)$.  The number of bounded regions, $b_d$, will equal the magnitude of the sum of the same coefficients.  Furthermore, by \cite[Thm.\ III.5.2]{BG}, the characteristic polynomial of the complete lift is
\begin{equation} \mylabel{Eoddcomplete}
p_{\Lat L_0([-k,k]K_n)}(\lambda) = (\lambda-1) (\lambda-nk-1)_{n-1}.
\end{equation}

To prove \eqref{Eodd} we treat the gains modulo $N$, where $N > nk$, and employ gain graph coloring \cite[\S III.4]{BG}.  
Taking gains in $\bbZ_n$ does not change the balanced circles because the largest possible sum of gains around a circle is $nk$.  
A zero-free proper coloring of $\Phi$ is a function $c : [n] \to \bbZ_N$ such that $c(i)$ and $c(j)$ differ by more than $k$ whenever $i\neq j$.  
($[n]$ is $\{1,2,\hdots,n\}$.) 
We obtain every such function by choosing its image, which can be done in $\frac{N}{N-nk} \binom{N-nk}{n}$ ways by \cite[Formula {[9b]}]{Comt}, and then choosing the coloring with that image in any of $n!$ ways.  Thus there are $N(N-nk-1)_{n-1}$ proper colorings.
That is, with gain group $\bbZ_N$ we have balanced chromatic polynomial $\chi^*_N$
for which $\chi^*_N(N+1) = N(N-nk-1)_{n-1}$.  Since by \cite[Thms.\ III.4.2 and III.5.3]{BG} the balanced chromatic polynomial depends only on which circles are balanced and it equals (up to a factor of $\lambda$) the characteristic polynomial of $\Latb \Phi$, we have Equation \eqref{Eodd}.  (This proof is from \cite[Ch.\ III]{GLSP}.  Later others gave various proofs.  Apparently the first published was that of Edelman and Reiner, whose proof is by an induction table for their arrangement $\cA_{r,I}^{(l)}$ with $I=[-l,l]$ \cite[p.\ 321]{E-R}; this arrangement (with their $l, r$ = our $k, n$) is the canonical linear hyperplane representation of $L_0([-k,k]K_n)$ \cite[\S IV.4]{BG}, whose characteristic polynomial is given by \eqref{Eoddcomplete}.  Athanasiadis' proof in \cite[Thm.\ 5.1]{Ath} is essentially the same as ours.)

The geometric semilattice $C_n(k)$ and related geometric lattices are very interesting objects.  Here I will merely point out that an element of $C_n(k)$ can be regarded as a kind of structured partition I call a \emph{weakly composed partition of $[n]$} (more specifically, a \emph{$k$-composed partition}; \emph{strictly composed} if $k=1$).  To explain this we first define a \emph{$k$-composition} of a set $B$:  it is an ordered weak partition of $B$,
that is, a sequence $(S_0,S_1,S_2,\hdots)$ of pairwise disjoint sets whose
union is $B$, in which there is no consecutive subsequence of $k$ empty sets
except in the infinite terminal string of nulls; for normalization we also
require $S_0 \neq \eset$ unless all $S_i = \eset$.  If $k=1$ this is just
a \emph{(strict) composition} of $B$, i.e., an ordered partition
$(S_0,S_1,\hdots,S_l)$, with a terminal string of null sets attached for
notational consistency.  A $k$-composed partition of $[n]$ is a
partition of $[n]$ together with a $k$-composition of each block.  One can explicitly describe the refinement ordering, interval structure, and characteristic polynomial of $C_n(k)$, $\Lat L_0([-k,k]K_n)$ (which consists of all $k$-composed partitions and ordinary partitions of $[n]$), and other related lattices and semilattices in terms of $k$-composed partial partitions \cite[Ch.\ III]{GLSP}.

Recently Gill \cite{Gill,Gill2} has studied the weakly composed partition semilattice $C_n(k)$ in the guise of the intersection semilattice of the canonical affine hyperplanar lift representation of $[-k,k]K_n$.  
In particular, $p_\cH(\lambda) = \lambda^2 - (2k+1)\binom{n}{2}\lambda + \big[3k(k+1)n+\frac{3n-1}{4}\big]\binom{n}{3}$ generically.

\begin{figure} \mylabel{Fodd4}
\vbox to 7.5truein{}
\begin{center}
\includegraphics{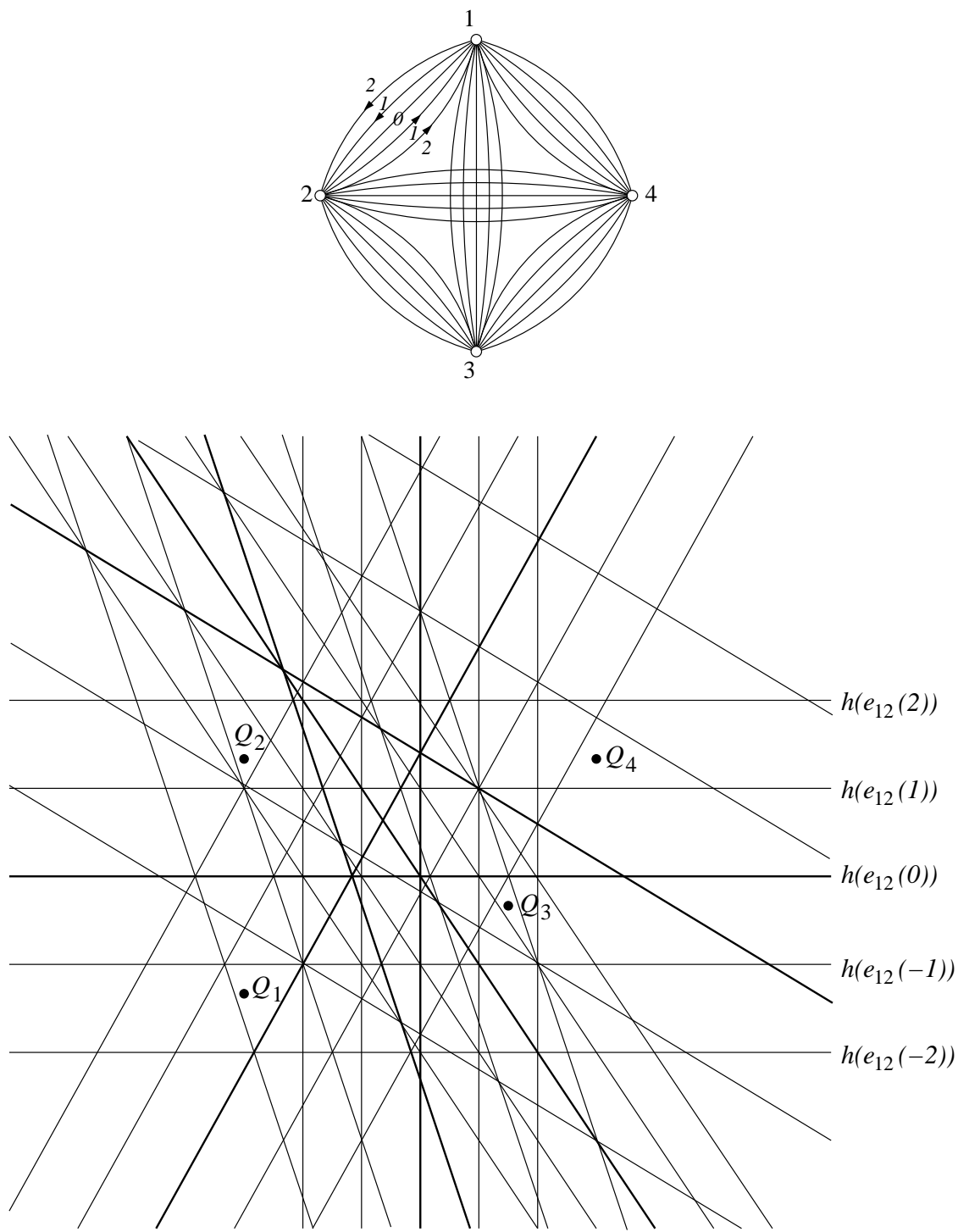}
\end{center}
\caption{The gain graph $\Phi_4 = [-2,2]K_4$ and a portion of the Pythagorean line arrangement of Example \ref{Xodd4}.  The heavy lines are the perpendicular bisectors.  In the gain graph, in each set of parallel edges the gains are the same; they are only marked for edges $e_{12}$.}
\end{figure}

\begin{exam} \mylabel{Xodd4}
Figure 11.3 displays $\Phi_4 = [-2,2]K_4$ and the generic planar arrangement $\cH(\Phi_4;\bQ)$ with $Q_1 = (0,0), Q_2 = (0,2), Q_3 = (2,1), Q_4 = (3,2)$.  Since $\chi^\textb_{\Phi_4}(\lambda) = \lambda(\lambda-9)_3$, the characteristic polynomial $p_\cH(\lambda) = \lambda^2-39\lambda+299$.  Thus there are 330 regions, 270 of them bounded.
\placefig{11.3}{Xodd4}
\end{exam} \smallskip

From Corollary \ref{T7} and Lemma \ref{Lwh} with $q = (2k+1)\binom{n}{2}$, $s_2 = 3(2k+1)^2\binom{n+1}{4}$, and $t = (3k^2+3k+1)\binom{n}{3}$ (or with greater difficulty from \eqref{Eodd}) we get the first few Whitney numbers of $C_n(k)$ and the face and flat numbers in two dimensions.

\subsection{The same, without bisectors.} \mylabel{Xoddnobi}
We take $\Phi'_n = \{\pm1,\pm2,\dots,\pm k\}K_n$ to be the additive real gain graph on $n$ vertices that has an edge of gain $i$ between each pair of vertices for every $i = \pm1, \hdots, \pm k$.  That is, it is the example of Section \ref{Xodd} without the bisectors of the reference segments.  This Pythagorean gain graph gives us $2k$ perpendiculars to each reference line, placed symmetrically about the bisector and equally spaced on each side of it, identically spaced---in Pythagorean coordinates---along all reference lines, but omitting the bisector.

The balanced chromatic polynomial, $\chi^\textb_{\Phi'_n}(\lambda)$, can be computed from \eqref{Ecompchr} by \cite[Prop.\ I.4.4]{GLSP}.  Omitting the details, which will appear in \cite[Ch.\ III]{GLSP}, the result is that
\begin{equation} \mylabel{Eoddnobi}
\sum_{n=1}^\infty \chi^\textb_{\Phi'_n}(\lambda) \frac{z^n}{n!} = e^{\lambda f(z)} - 1,
\end{equation}
where $f(z)$ is determined by
\begin{equation*} 
f'(z) = \sum_{j=0}^\infty \tbinom{\lambda-jk-1}{j} z^j \quad \text{ and } \quad f(0) = 0.
\end{equation*}
From this one can extract the balanced chromatic polynomial itself and thus the Whitney numbers of the first kind and, by the methods of Section \ref{invar}, the region and bounded region numbers in any dimension $d$.  

Replacing $z$ by $-z$, if we set $\lambda = -1$ Equation \eqref{Eoddnobi} becomes the exponential generating function for $r_n$, the number of regions of $\cH(\Phi'_n;\bQ)$ when $d=n-1$.  If we set $\lambda = 1$ it becomes the exponential generating function for the number of bounded regions.

In the plane we can compute all Whitney number from Lemma \ref{Lwh} and the values $q=2k\binom{n}{2}$, $s_2=12k^2\binom{n+1}{4}$, and $t=k(k+1)\binom{n}{3}$.  Then $p_\cH(\lambda) = \lambda^2 - 2k\binom{n}{2}\lambda + 3k(kn+1)\binom{n}{3}$.

\begin{figure} \mylabel{Foddnobi4}
\vbox to 7.5truein{}
\begin{center}
\includegraphics{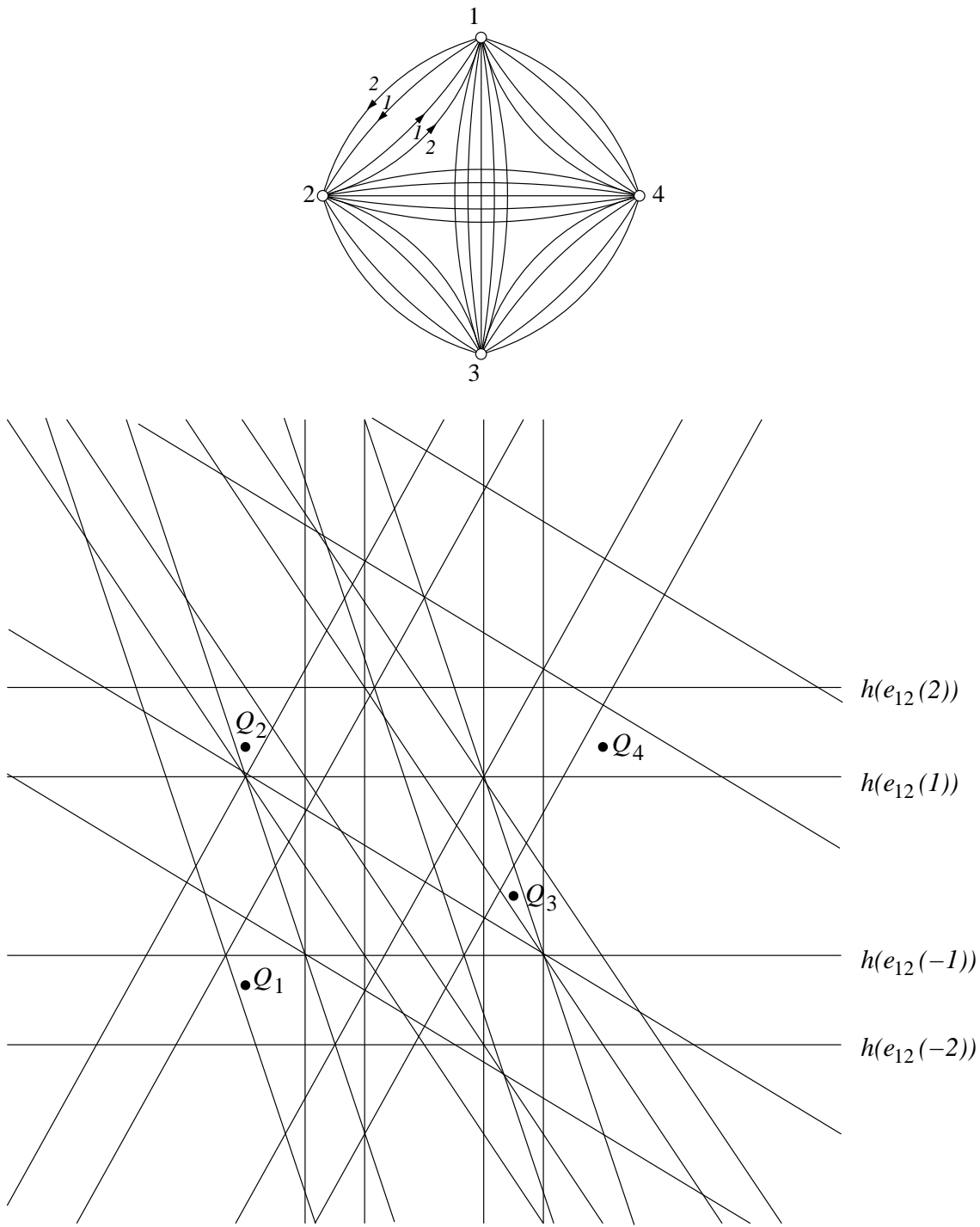}
\end{center}
\caption{The gain graph $\Phi'_4 = \{\pm1,\pm2\}K_4$ and part of the Pythagorean line arrangement from Example \ref{Xoddnobi4}.  The heavy lines are the perpendicular bisectors.}
\end{figure}

\begin{exam} \mylabel{Xoddnobi4}
In Figure 11.4 are $\Phi'_4 = \{\pm1,\pm2\}K_4$ and its Pythagorean line arrangement $\cH(\Phi'_4;\bQ)$.  From the general planar characteristic poynomial we have $p_\cH(\lambda) = \lambda^2 - 24\lambda + 216$.  There are 241 regions and 193 are bounded.
\placefig{11.4}{Xoddnobi4}
\end{exam}

\begin{exam} \mylabel{Xoddnobi1}
\emph{Just two perpendiculars to each reference line.}
We specialize to the case $k=1$, that is, 
$\Phi'_n = \{+1,-1\}K_n$.  We may look upon $\Phi'_n$ as a directed graph, an edge $e$ with $\phi'_n(e;i,j) = +1$ being interpreted as an arc directed from
$i$ to $j$.  Thus we are discussing the poise lift matroid of the complete digraph.  (See the definition of poise bias in \cite[Ex.\ I.6.5]{BG}.)  

Here the exponential generating function of $r_n$ has a remarkable expression.  The Catalan numbers $C_n = \frac{1}{n+1}\binom{2n}{n}$ have ordinary generating function $f_C(z) = \sum_0^\infty C_n z^n = \big(1-\sqrt{1-4z}\big)/2z$, as is well known.  Then $f(-z)\big|_{\lambda=-1} = f_C(z)-1$ so
$$
1 + \sum_{n=1}^\infty r_n \frac{z^n}{n!} = e^{f_C(z)-1} = 
\exp\left( \frac{1-\sqrt{1-4z}}{2z} - 1 \right).
$$
The explanation is found in the observation of Postnikov and Stanley that (in our language) $\cH([-1,1]K_n;\bQ)$ with the affinographic reference points of Example \ref{Xdeform}---equivalently, any affinely independent reference points---has $n! C_n$ regions \cite[Prop.\ 7.2]{P-S}.  Thus they call it and $\cH(\{\pm1\}K_n;\bQ)$ with similar reference points \emph{Catalan arrangements} \cite[(3.8)]{P-S}.

We mention that Athanasiadis \cite[Thm.\ 5.3]{Ath} has a different formula from ours for converting the exponential generating function of $p(\Latb [-1,1]K_n; \lambda)$ to that for $p(\Latb\Phi'_n; \lambda)$, as do Postnikov and Stanley though stated there only for the number of regions (\cite[Thm.\ 7.1]{P-S}, announced in \cite[Thm.\ 2.3]{Stan}.  The application to the characteristic polynomial follows from the exponential formula of \cite[Thm.\ 1.2]{Stan}).
\end{exam}

\subsection{Symmetric, uniform Pythagorean hyperplanes: even case.}  \mylabel{Xeven}
Here the Pythagorean gain graph is 
$\Phi''_n = \{\pm1,\pm3,\hdots,\pm(2k-1)\}K_n$, in which each
pair of vertices is joined by $2k$ edges whose gain values are $\pm i$ for every odd $i \leq 2k$ (or, what is essentially the same, $\pm\delta i$ for any fixed $\delta > 0$).  This gain graph corresponds to an arrangement of $2k$ perpendiculars to each reference line, symmetrically placed around the bisector and---in Pythagorean coordinates---identically spaced along all reference lines.

Athanasiadis \cite[Thm.\ 5.2]{Ath} has produced a remarkable expression for $p_{\Latb\Phi}(\lambda)$ (that is, $\chi^\textb_{\Phi''_n}(\lambda)/\lambda$) when $\Phi$ has a form that includes our examples from Sections \ref{Xodd}, \ref{Xoddnobi}, and \ref{Xeven}.  For $A$ a finite set of positive integers and $A^0 = A \cup \{0\}$, let $\Phi_n = (\pm A)K_n$ and $\Phi^0_n = (\pm A^0)K_n$.  By taking $A = \{1,3,\dots,2k-1\}$ in Athanasiadis' theorem we find that, for sufficiently large integers $\lambda$, $p(\Latb{\Phi''_n}^0; \lambda)$ is the coefficient of $x^{\lambda-2n}$ in the expression
$$
(n-1)! \left( \frac{1+x^{2k-1}}{1-x^2} \right)^n.
$$
The conclusion is that $\Latb {\Phi''_n}^0$ has characteristic polynomial
\begin{alignat*}{1}
{p''_n}^0(\lambda) &= \sum_{\substack{ i=0 \\ \text{even} }}^n \binom{n}{i} \left( \tfrac{\lambda+i}{2} - 1 - ki \right)_{n-1}   \\
                   &= \sum_{\substack{ i=0 \\ \text{odd} }}^n \binom{n}{i} \left( \tfrac{\lambda+i}{2} - 1 - ki \right)_{n-1}
\end{alignat*}
and therefore
$$
{p''_n}^0(\lambda) = \tfrac12 \sum_{i=0}^n \binom{n}{i} \left( \tfrac{\lambda+i}{2} - 1 - ki \right)_{n-1}
$$
for $n>0$.  Now we apply the argument of Athanasiadis \cite[Thm.\ 5.3]{Ath} to obtain the polynomial of $\Latb \Phi''_n$:
\begin{alignat*}{1}
p''_n(\lambda) &= \sum_{l=1}^n S(n,l) {p''_l}^0(\lambda)  \\
               &= \tfrac12 \sum_{l=1}^n \sum_{i=0}^l S(n,l) \binom{l}{i} \left( \tfrac{\lambda+i}{2} - 1 - ki \right)_{l-1}
\end{alignat*}
for $n>0$.  From this the characteristic polynomial of a generic arrangement $\cH(\Phi;\bQ)$ in $\bbE^d$ can be found via Theorem \ref{Tinvarp}.

For applications probably the most significant case is that in which $k=1$.  This case is easier to analyze because it is the same as Example \ref{Xoddnobi1}.

A triangle cannot be balanced in this example, no matter what value $k$ has.  Hence $t=0$, $q = 2k\binom{n}{2}$, and $s_2 = 12 k^2 \binom{n+1}{4}$ in Corollary \ref{T7}; these give the face and flat numbers of a generic planar $\cH(\Phi''_n;\bQ)$.  Thus $p_\cH(\lambda) = \lambda^2 - 2k\binom{n}{2}\lambda + 12k^2\binom{n+1}{4}$ generically.  
In this example we can go further: since there are no balanced triangles we can compute the number of balanced quadrilaterals and use this to get the Whitney numbers needed for the face and flat numbers in $\bbE^3$.  Omitting the somewhat lengthy details, the Whitney numbers $w_{i3}$ are given by Lemma \ref{Lwh} with $F_{n-3} = k^3 (n^3-5n-12) \binom{n}{3}$ and $t' = 6 \big[16\binom{k+1}{3} + 3k\big] \binom{n}{4}$.

\begin{figure} \mylabel{Feven4}
\vbox to 7.5truein{}
\begin{center}
\includegraphics{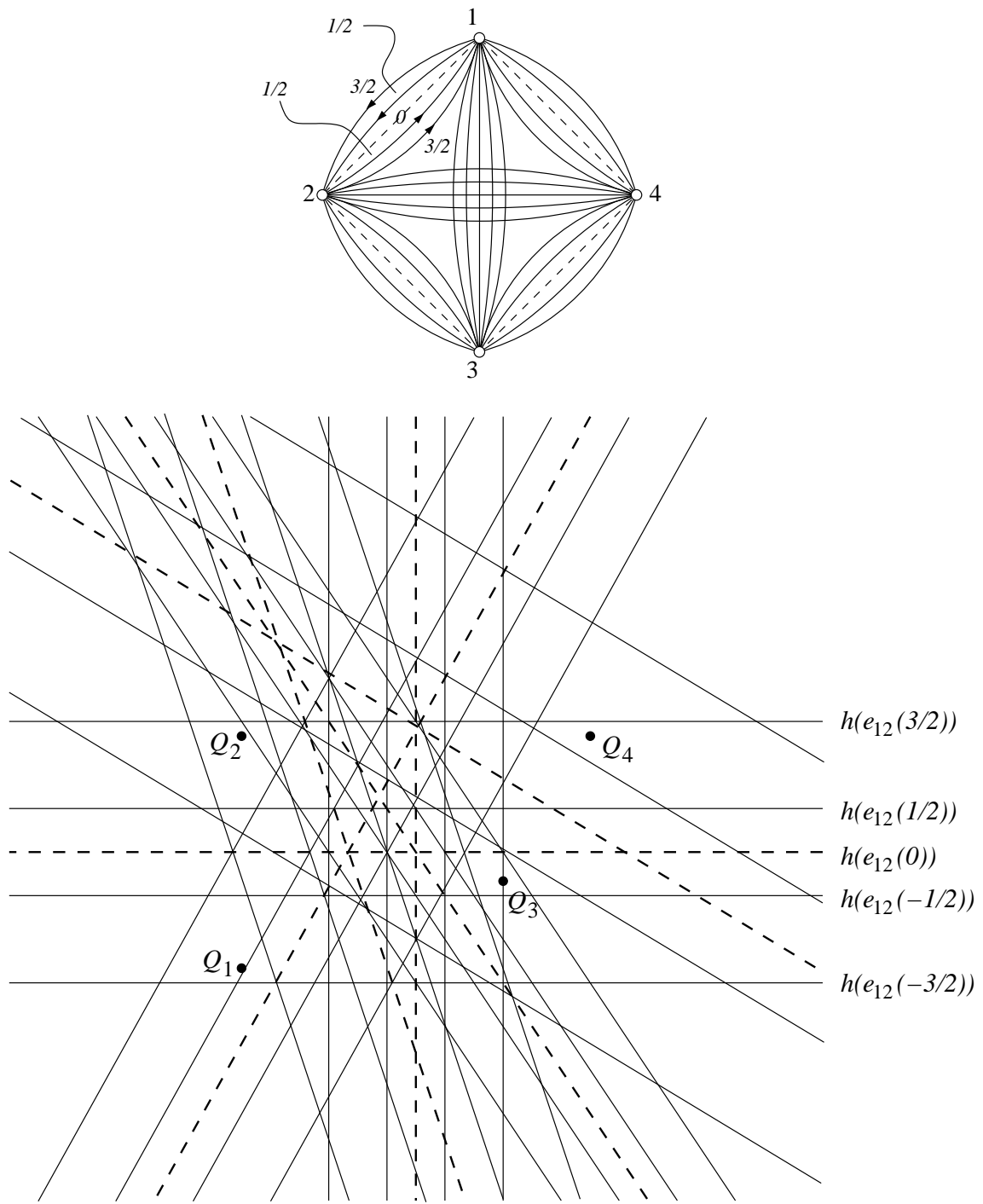}
\end{center}
\caption{The gain graphs and line arrangements of Example \ref{Xeven4}.  The heavy lines (which are dashed) are the perpendicular bisectors.}
\end{figure}

\begin{exam} \mylabel{Xeven4}
We take $k=2$ and $n=4$ so that we need $\Phi''_4 = \{\pm1,\pm3\}K_4$.  However, so that perpendiculars of each parallel class will be separated by the same distance as in Example \ref{Xodd4}, we divide the gains by 2, giving gain graph $\frac12\Phi''_4 = \{\pm\frac12,\pm\frac32\}K_4$.  This has no effect on balance, so the face and flat counts remain the same.  The solid lines in Figure 11.5 display the gain graph and corresponding generic Pythagorean arrangement.  Adding in the dashed lines we have $\frac12{\Phi''_4}^0 = \{0,\pm\frac12,\pm\frac32\}K_4$ and the associated Pythagorean arrangement.
\placefig{11.5}{Xeven4}
\end{exam}


\section{The voter debates} \mylabel{voter}

Once Good and Tideman had introduced the geometrical model of bisecting hyperplanes\footnote{The general idea of bisecting hyperplanes' modeling voter choice was well known.  As an application of Voronoi diagrams, it uses only parts of hyperplanes.  It was the new idea of Good and Tideman to consider the entire candidate ranking that gave a role to whole hyperplanes.  I am not aware of any other similar work.} 
it was easy to imagine more elaborate but also plausible variations and to ask for the number of possible outcomes.  In the more elaborate models, an outcome may no longer be a simple preference ranking.  For example, one of the variations is to allow discrete degrees of preference; an outcome is then the whole set of degrees of preference the voter feels between each pair of candidates.

In the Good--Tideman model, the voter must choose one over the other for every pair of candidates.  (We ignore borderline cases.)  A more
sophisticated voter would expect to see a measurable difference before
forming a preference.  We can grade models by the degree of sophistication
the voter shows in comparing two candidates.  A voter at the lowest level simply
chooses one of the two.  At the second level she has three options: prefer
the candidate who is significantly superior or remain neutral if neither
is.\footnote{The idea of an ``indifference region'' appeared (independently and somewhat earlier) in the nearest-neighbor study of Cacoullos \cite[\S 5]{Cac1}.  The delimiting hyperplanes of Cacoullos' indifference region are fixed by proportional distance, not Pythagorean coordinate, and are based on affinely independent reference points.  Still, this is the only place I know of that treats what we would call unbalanced gains.}  
A voter in the third level of sophistication distinguishes four
choices: strong or weak preference for either candidate.  One at the fourth
level adds the option of remaining neutral.  And so forth.

Each level of sophistication can lead to a variety of models depending on how the
voter chooses to define the boundaris between different options.  Let us assume the choice is always made on the basis of modified Pythagorean coordinates as in 
Equation \eqref{foot-num}: in other words, the regions
within which the voter chooses a particular option for evaluating $Q_i$ \emph{vis-{\'a}-vis} $Q_j$ are demarcated by hyperplanes perpendicular to the line $Q_iQ_j$, measured by Pythagorean coordinate times some power of the distance between the candidates.  (Otherwise we need a new theory.)  Then we have to 
consider how the voter might place the choice boundaries.  There are several
directions of classification.  First is the rule
of placement: the rule may specify for each hyperplane its Pythagorean
coordinate, its actual distance from the midpoint of segment $[Q_i,Q_j]$, its proportional distance as a fraction of $d(Q_i,Q_j)$, or some other exponent 
$\alpha$ in \eqref{foot-num}.  Second is
homogeneity: the same rule of location may be used for every pair of candidates, 
or the voter may emplace boundaries at random (random, of course, to the
observing sociometrician, who does not know the voter's reasons).  Third is symmetry: the boundaries for comparing
$Q_i$ with $Q_j$ may be symmetrical, or they may not.  Fourth is uniformity: the boundary hyperplanes for a given pair of candidates may be uniformly spaced (in the chosen measurement scheme) or they may not.

Thus there are five dimensions along which we can classify voting models
of our general type.  Fortunately for us, they lead to just a few mathematical situations.

Let us suppose the voter is at a definite level of sophistication, say the
$M$-th level, where there are $M+1$ options divided by $M$ hyperplanes
between each pair of candidates.

If the voter applies the distance or proportional rule of placement or any other 
rule of type \eqref{foot-num} with $\alpha \neq 0$, then (in general)
the only thing that matters is whether she chooses to use bisecting
hyperplanes.  (That is what we learn from Theorem \ref{T2}.)  If
she follows a symmetrical demarcation scheme with odd $M$, there will be a
bisector between each pair of candidates and we are in the situation of Section \ref{X3} (Section \ref{X1} if $M=1$).  Otherwise
she will not (in general) take any bisectors and we have the example of Section \ref{X2} with $m=M$.

If the voter uses a placement rule with random hyperplanes, the same remarks apply.  We will be in Section \ref{X2} with $m=M$ unless she deliberately chooses bisectors, when Section \ref{X3} or \ref{X1} applies.

A homogeneous Pythagorean placement rule leads to more varied mathematics.  

A uniform symmetrical rule puts us in Section \ref{X1} (Corollary \ref{T5}) if
$M=1$: this is the original case where there is one dividing hyperplane
for each pair and it is the bisector (since it is symmetrically placed).  When $M>1$, the voter is using dividing hyperplanes with coordinates
$\psi_{ij}(h) = 0, \pm \delta, \hdots, \pm l\delta$ when $M$ is odd, $M = 2l+1$, but
$\psi_{ij}(h) = \pm \delta, \pm 3\delta, \hdots, \pm (2l-1)\delta$ when $M = 2l$ is even.  This puts us in Section \ref{Xodd} if $M$ is odd and Section \ref{Xeven} if $M$ is even, Example \ref{Xoddnobi1} if $M=2$.

Asymmetric rules in general seem implausible, but there is one that deserves attention.  
That is where the voter always prefers one in each pair of candidates ($M=1$, the first level of sophistication) but has a biased
decision rule instead of simply choosing the nearer candidate.  Even an unsymmetrical
decision rule gives an arrangement of hyperplanes with balanced Pythagorean gain graph, 
if the voter has for each candidate $Q_i$ what we might call a ``prior bias'' $q_i$ 
(whether a preference or a prejudice) and
the dividing hyperplane $h_{ij}$ has Pythagorean coordinate $\psi_{ij} (h_{ij}) = q_i-q_j$; that is, it is offset due to the bias.  The more $q_j$ exceeds $q_i$, the more $h_{ij}$ will retreat from $Q_j$ in the direction of $Q_i$, so favoring $Q_j$.
Then Corollary \ref{T3} tells us that the voter has the same number of possible
rankings as in Good and Tideman's original model.

The actual preference rankings that correspond to the regions of $\cH$ are wholly unknown, although they must be somehow related to the oriented matroid structure of $\cH$.

\begin{resprob} \mylabel{RPorderings}
(a) What structure is there to the set $\cO$ of preference orderings that is realized by a generic arrangement of all the perpendicular bisectors of $n$ reference points in $d$-space?  ($\cO$ is not, as \eqref{E1} suggests, the set of all permutations of $\{1,\hdots,n\}$ that have at least $n-d$ cycles.  An easy example of 4 planar points suffices to disprove that.)  (b)  Which orderings correspond to unbounded regions?  (c)  Which new orderings appear as the dimension rises?  (d)  Characterize the sets $\cO$ that arise from all different possible generic reference points in fixed dimension.
\end{resprob}

\begin{resprob} \mylabel{RPpartial-orderings}
Generalize Research Problem \ref{RPorderings} to arrangements of only some perpendicular bisectors.
\end{resprob}

\begin{resprob} \mylabel{RPgen-orderings}
Generalize Research Problem \ref{RPorderings} to arrangements with indifference regions or degrees of preference.
\end{resprob}


\section{Further research} \mylabel{open}

\subsection{Nongenericity} \mylabel{nongeneric}

We observed in the introduction that an exact description of generic reference points is unknown.  This is unsatisfactory.

\begin{resprob} \mylabel{RPnongeneric}
Given a real, additive gain graph $\Phi$, characterize genericity (with respect to $\Phi$) of reference points.  That is, what are the properties of $\bQ$ that guarantee that $\cL(\cH(\Phi;\bQ))$ is generic?  At a minimum, characterize genericity amongst those $\bQ$ having ideal general position.
\end{resprob}

\subsection{Special position} \mylabel{specpos}

Suppose we require certain subsets of the $Q_i$ to be affinely dependent but ask for genericity in other respects; or suppose we even prescribe certain parallelisms.  It should still be possible to deduce the number of resulting regions by abstract combinatorial means, similar to those of this paper but considerably more complex.  Probably, the essential information about the reference points $Q_i$ is specified by their affine dependencies and the configuration at infinity of the lines they determine.  Abstractly these would be described by their affine dependence matroid and a comap of that matroid to account for behavior at infinity.

\begin{resprob} \mylabel{RPspecpos-structure}
Develop the structure theory of $\cL(\cH(\Phi,\bQ))$ and $\cL(\cH_\bbP(\Phi,\bQ))$ for reference points that have (a) specified affine dependencies, (b) simple position but specified behavior at infinity, or (c) both specified affine dependencies and specified behavior at infinity, but are otherwise generic.  Especially, compare their (semi)lattice structures to those of $\Latb\Phi$ and $\Lat L_0(\Phi)$.
\end{resprob}

Part (a) will be solved by defining the complete lift matroid of a biased graph with a matroid given on the vertex set, which is a problem of great intrinsic interest.

\begin{resprob} \mylabel{RPspecpos-enum}
Develop the enumerative theory of perpendicular dissections whose reference points have affine or infinite special position or both, but are otherwise generic.
\end{resprob}

\subsection{Hyperbolic dissections} \mylabel{hyperbolic}

The way Good and Tideman argued for their arrangement of bisectors was this:  The voter prefers the nearer of two candidates.  The boundary between preference domains is the locus of points equidistant from both candidates.  That is the perpendicular bisector of the connecting line segment.

If we apply the same reasoning to a more sophisticated voter, it leads us to a hyperbolic rather than a hyperplanar dissection.  Suppose the voter has second-level sophistication with the rule that she prefers one candidate only if its nearness exceeds the other's by some threshold $\delta$.  That is, if the voter falls in the domain where  $|d(P, Q_j) - d(P,Q_i)| < \delta$, she is neutral.  If $d(P,Q_j) - d(P,Q_i) > \delta$, she prefers $Q_i$.  This mathematics gives us three preference domains separated by the hyperboloid of revolution $|d(P,Q_j) - d(P,Q_i)| = \delta$.  The theory of hyperplane dissections does not apply.  

Problems of dissection by entire curved subspaces are little studied.  The planar hyperbolic Dirichlet tessellations (that is, Voronoi diagrams) of \cite[\S 2]{AB} are closely related, though in their subject (that of generalized Voronoi diagrams) only parts of curves and surfaces are employed.  There is a partial theory of topological dissections \cite{CATD}, which are more complicated than linear dissections because one has to determine not only the semilattice of intersections but also the topology of each intersection and each region, or at least their Euler characteristics.  It does seem possible that the special properties of hyperboloids on shared foci may make it possible to solve some cases at least of the hyperbolic preference ranking problem.

\begin{resprob}  \mylabel{RPhyperbolic}
Solve the second-level hyperbolic voter ranking problem in the plane.  The problem is to determine the maximum number of possible preference rankings if the voter $P$ prefers the nearer of candidates $Q_i$ and $Q_j$ if $|d(P, Q_i) - d(P,Q_j)| > \delta_{ij}$ and is neutral otherwise.  The $\delta_{ij}$ are positive numbers that may be taken all equal if that helps the solution.
\end{resprob}

\begin{resprob}  \mylabel{RPhyperbolic-generic}
Decide whether the solution to Problem \ref{RPhyperbolic} is generic.  That is, is the set of choices of candidates for which the maximum is attained dense in $(\bbE^2)^n$?
\end{resprob}

\subsection{Other scalar fields} \mylabel{scalars}

Let us speculate about an inner product space over an arbitrary field $F$.  (All the opinions in this section are unverified wish and hope.)
Our results should largely apply to any ordered field.  Regions, faces, and the intersection semilattice are defined; the field is topologized so generic position exists; everything seems to work except for a possible difficulty with gains, explained below.  Over an unordered field there is an intersection lattice but there are no regions or faces.  The fundamental problem there is that of genericity.  Defining it would seem to call for a topology on $F$, but it would be very interesting to see a definition that avoids this, perhaps by basing genericity on a suitable subfield like $\bbQ$ or the $p$-adics.  

Finite fields would require an altogether different approach.

The appropriate gain group for an arbitrary inner product space might not be $F^+$.  Let $D$ be the set of values of $\langle v,v\rangle - \langle w,w\rangle$ for $v,w \in F^d$, where $\langle x,y\rangle$ is the inner product.  The gains of a Pythagorean gain graph over $F$ must be chosen in $D$.  As long as $D$ is an additive group (as for instance when $F = \bbQ$ or $\bbR$, so $D=F$; or when $F=\bbC$, so $D=\bbR$), the parts of our theory that involve switching, as required for cross-sections and induced arrangments (Section \ref{xsect}), should carry over.

Complex space is especially interesting.  Every coefficient of the characteristic polynomial of $\cH$ has meaning.  The integral cohomology of the complement of the arrangement is determined by the intersection semilattice $\cL(\cH)$ and the rank of the $i$th cohomology group equals $|w_{d-i}(\cL(\cH))|$, the magnitude of the $(d-i)$th Whitney number of the first kind, by the theorems of Orlik and Solomon (\cite{OS}, \cite[Ch.\ 5]{OT}).


\end{document}